\newlength{\defbaselineskip}
\theoremstyle{plain}
\newtheorem{theorem}{Theorem}[section]
\newtheorem{lemma}[theorem]{Lemma}
\theoremstyle{definition}
\newtheorem{definition}[theorem]{Definition}
\newtheorem{example}[theorem]{Example}
\newtheorem{conjecture}[theorem]{Conjecture}
\newcommand{\C}{\mathcal{C}}
\newcommand{\Z}{\mathbb{Z}}
\newcommand{\I}{\mathbb{I}}
\newcommand{\R}{\mathbb{R}}
\newcommand{\arrowdownto}{\!\!\searrow\!\!} 
\newcommand{\arrowupto}{\!\!\nearrow\!\!} 
\newcommand{\ceil}[1]{\lceil #1 \rceil}
\DeclareMathOperator{\area}{area}
\DeclareMathOperator{\arm}{arm}
\DeclareMathOperator{\bounce}{bounce}
\DeclareMathOperator{\Cat}{Cat}
\DeclareMathOperator{\defc}{defc}
\DeclareMathOperator{\dg}{dg}
\DeclareMathOperator{\dinv}{dinv}
\DeclareMathOperator{\leg}{leg}
\DeclareMathOperator{\Par}{Par}
\DeclareMathOperator{\wt}{wt}
\newcommand{\DV}{\mathcal{DV}} 
\newcommand{\dvmap}{\textsc{dv}}
\newcommand{\DP}{\mathcal{DP}} 
\newcommand{\dpmap}{\textsc{dp}}
\newcommand{\dyckmap}{\textsc{dyck}}
\numberwithin{equation}{section}
\definecolor{darkgreen}{rgb}{0.0, 0.7, 0.0}
\definecolor{cyan}{cmyk}{1,0,0,0}
\newlength{\cellsize}
\newcommand\tableau[1]{
\vcenter{
\let\\=\cr
\baselineskip=-16000pt
\lineskiplimit=16000pt
\lineskip=0pt
\halign{&\tableaucell{##}\cr#1\crcr}}}
\newcommand{\tableaucell}[1]{{%
\def \arg{#1}\def \void{}%
\ifx \void \arg
\vbox to \cellsize{\vfil \hrule width \cellsize height 0pt}%
\else
\unitlength=\cellsize
\begin{picture}(1,1)
\put(0,0){\makebox(1,1){$#1$}}
\put(0,0){\line(1,0){1}}
\put(0,1){\line(1,0){1}}
\put(0,0){\line(0,1){1}}
\put(1,0){\line(0,1){1}}
\end{picture}%
\fi}}
\begin{document}

\title{A Combinatorial Approach to the Symmetry of $q,t$-Catalan Numbers}
\subjclass[2010]{05A19, 05A17, 05E05}
\date{\today} 

\author{Kyungyong Lee}
\address{Department of Mathematics \\
 University of Nebraska --- Lincoln \\
 Lincoln, NE 68588, USA \\
and Korea Institute for Advanced Study \\
Seoul 02455, Republic of Korea}
\email{klee24@unl.edu; klee1@kias.re.kr}
\thanks{The first author was supported by the Korea Institute for
Advanced Study (KIAS), the AMS Centennial Fellowship, NSA grant
H98230-14-1-0323, and the University of Nebraska --- Lincoln.}

\author{Li Li}
\address{Department of Mathematics and Statistics \\
 Oakland University \\ 
 Rochester, MI 48309}
\email{li2345@oakland.edu} 
\thanks{The second author was partially supported by the 
Oakland University URC Faculty Research Fellowship Award.}

\author{Nicholas A. Loehr}
\address{Department of Mathematics \\
 Virginia Tech \\
 Blacksburg, VA 24061-0123 \\
and Department of Mathematics \\
United States Naval Academy \\
Annapolis, MD 21402-5002}
\email{nloehr@vt.edu} 
\thanks{This work was partially supported by a grant
from the Simons Foundation (\#244398 to Nicholas Loehr).}

\keywords{$q,t$-Catalan numbers, Dyck paths, dinv statistic, joint symmetry,
integer partitions}
\maketitle
\begin{abstract}
The \emph{$q,t$-Catalan numbers} $C_n(q,t)$ are polynomials in $q$ and $t$
that reduce to the ordinary Catalan numbers when $q=t=1$. These polynomials 
have important connections to representation theory, algebraic geometry,
and symmetric functions. Haglund and Haiman discovered combinatorial formulas 
for $C_n(q,t)$ as weighted sums of Dyck paths (or equivalently, integer
partitions contained in a staircase shape). This paper undertakes a 
combinatorial investigation of the joint symmetry property $C_n(q,t)=C_n(t,q)$.
We conjecture some structural decompositions of Dyck objects into
``mutually opposite'' subcollections that lead to a bijective explanation
of joint symmetry in certain cases. 
A key new idea is the construction of infinite chains of partitions 
that are independent of $n$ but induce the joint symmetry for all $n$
simultaneously.  
Using these methods, we prove combinatorially
that for $0\leq k\leq 9$ and all $n$, the terms in $C_n(q,t)$ of total degree 
$\binom{n}{2}-k$ have the required symmetry property.  
\end{abstract}

\section{Introduction}
\label{sec:intro}

\subsection{Background on $q,t$-Catalan Numbers.}
\label{subsec:background-qtcat}

The \emph{$q,t$-Catalan numbers} were introduced in 1996 by Garsia and 
Haiman as part of an ongoing study of Macdonald's symmetric polynomials
and the representation theory of the diagonal harmonics 
modules~\cite{GH-qtcat}. Garsia and Haiman's original definition has the form
\begin{equation}\label{eq:GH-qtcat-def}
 C_n(q,t)=\sum_{\mu\in\Par(n)} r_{\mu}(q,t), 
\end{equation}
where $\Par(n)$ is the set of integer partitions of $n$,
and $r_{\mu}(q,t)$ is a complicated rational function in $q$ and $t$
built up from the arms, legs, coarms, and colegs of the cells
in the Ferrers diagram of $\mu$. (These terms will be defined
below; for general background on partitions and symmetric functions,
we refer the reader to~\cite{macbook,stanvol2}. See Haglund's 
book~\cite{hag-book} for an extensive discussion of $q,t$-Catalan numbers.)

Let $\mu'$ be the conjugate of the partition $\mu$, whose diagram is 
obtained by transposing the diagram of $\mu$. It follows from
the definition of the rational functions $r_{\mu}(q,t)$ that
$r_{\mu'}(q,t)=r_{\mu}(t,q)$. Replacing the summation index $\mu$
by $\mu'$ in~\eqref{eq:GH-qtcat-def}, we deduce the 
\emph{joint symmetry property}
\[ C_n(q,t)=C_n(t,q)\qquad(n\geq 0). \]

Another much less obvious property of the original $q,t$-Catalan numbers 
is that each $C_n(q,t)$ is a \emph{polynomial} in $q$ and $t$ with
\emph{nonnegative integer} coefficients. Garsia and Haiman were able to
prove that the specialization $C_n(q,1)$ was a polynomial obtained by
summing terms $q^{\area(\pi)}$, where $\pi$ is a 
\emph{Dyck path of order $n$} and $\area(\pi)$ is the number of area
cells below the path. (See below for more detailed definitions.)
This suggested that there might be a second statistic $\wt(\pi)$ on 
Dyck paths, such that $C_n(q,t)=\sum_{\pi} q^{\area(\pi)}t^{\wt(\pi)}$.
One such statistic, called the \emph{bounce statistic}, was conjectured
by Haglund in 2003~\cite{hag-qtcatconj}. Upon hearing from Garsia that
Haglund had found a statistic, Haiman quickly discovered another 
statistic called \emph{dinv} that also appeared to work. The two conjectures
were found to be equivalent via a bijection on Dyck paths that sends
the pair of statistics $(\area,\bounce)$ to the pair of statistics
$(\dinv,\area)$.  In particular, this implies that all three statistics
have the same distribution on Dyck paths of order $n$.
Garsia and Haglund were eventually able to prove
that Haglund's combinatorial formula for $C_n(q,t)$ was indeed 
correct. Their proof (announced in~\cite{GHag-PNAS} and given in detail
in~\cite{GHag-qtcatpf}) consists of extremely intricate algebraic
manipulations of symmetric polynomials, making extensive use of 
plethystic calculus. 

Since the original $q,t$-Catalan numbers are jointly symmetric in $q$
and $t$, the same must be true of the various combinatorial formulas
for $q,t$-Catalan numbers. In particular, there must exist a bijection
on Dyck paths of order $n$ interchanging the area and bounce statistics,
and there must exist another bijection interchanging area and dinv.
However, at this time, no one has been able to construct such bijections
valid for all $n$. The main goal of this paper is to investigate
the joint symmetry of $q,t$-Catalan numbers from a combinatorial viewpoint.
Roughly speaking, we prove combinatorially that the homogeneous components
of $C_n(q,t)$ of sufficiently high degree have the desired joint symmetry 
property, and we formulate several conjectures that provide a general framework
for understanding joint symmetry.

\subsection{Definition of $q,t$-Catalan Numbers based on Dyck Vectors.}
\label{subsec:def-dyck-vectors}

To state our results more precisely, we must first review the details
of the combinatorial definition of $q,t$-Catalan numbers based on the
area and dinv statistics. The definition can be formulated in three
equivalent ways, using combinatorial objects called \emph{Dyck vectors},
\emph{Dyck paths}, or \emph{Dyck partitions}, which are all illustrated
in Figure~\ref{fig:object} below. The simplest formulation involves
Dyck vectors (also called \emph{Dyck sequences} or the \emph{area vectors
of Dyck paths}), so we discuss these first.

\begin{definition}
A \emph{Dyck vector} is a finite sequence $v=(v_1,v_2,\ldots,v_n)\in\Z^n$ 
such that $v_1=0$, every $v_i\geq 0$, and $v_{i+1}\leq v_i+1$ for all $i<n$.
The \emph{area} of a Dyck vector is $\area(v)=v_1+v_2+\cdots+v_n$.
The \emph{diagonal inversion count} of a Dyck vector, denoted $\dinv(v)$,
is the number of $i<j$ such that $v_i-v_j\in\{0,1\}$.
Let $\DV_n$ be the set of all Dyck vectors of length $n$.  
Finally, define the \emph{combinatorial $q,t$-Catalan numbers} by setting
\[ \Cat_n(q,t)=\sum_{v\in\DV_n} q^{\area(v)}t^{\dinv(v)}. \]
\end{definition}

\begin{example}
The vector $v=(0,1,1,0,1,2,2,0)$ is a Dyck vector in $\DV_8$
with $\area(v)=7$ and $\dinv(v)=12$. The entries in the vector $v$
count the number of shaded cells in each row of Figure~\ref{fig:object}
below, reading from the bottom of the figure to the top. These cells
lie underneath a \emph{Dyck path}, which is a lattice path 
from $(0,0)$ to $(n,n)$ consisting of north and east steps that never go 
below the line $y=x$. This correspondence between Dyck vectors and
Dyck paths is a bijection. So we sometimes identify these two types
of objects, blurring the distinction between a Dyck vector 
(a list of integers) and the associated Dyck path 
(a sequence of north and east steps).
\end{example}

\begin{figure}[h]
\begin{center}
\epsfig{file=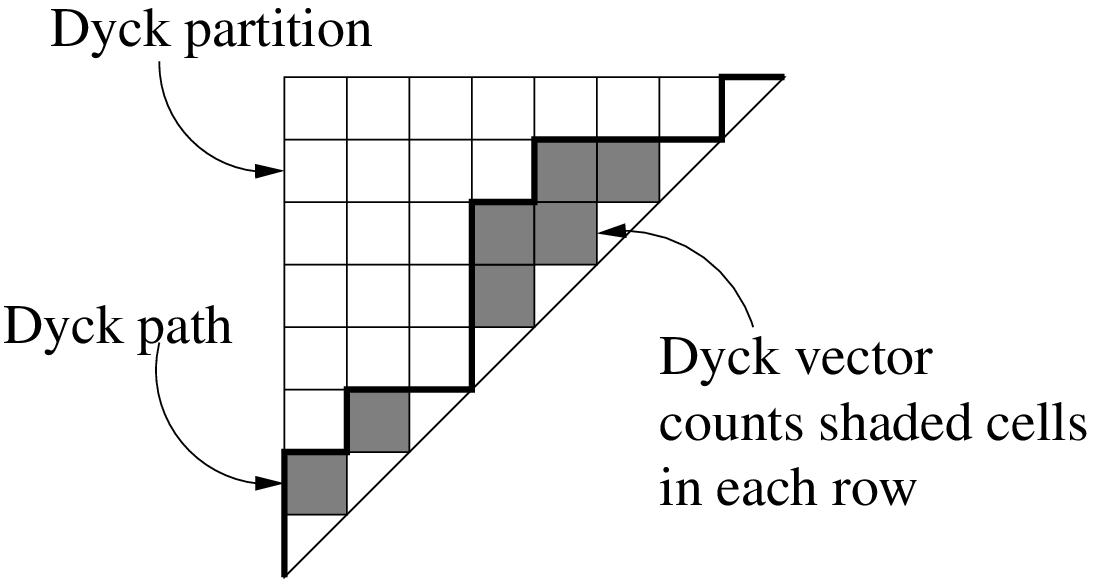,scale=1.0}
\end{center}
\caption{A typical object counted by $\Cat_n(q,t)$.}
\label{fig:object}
\end{figure}

\begin{example}
When $n=3$, we have $\DV_3=\{(0,0,0),(0,0,1),(0,1,0),(0,1,1),(0,1,2)\}$, so
\[ \Cat_3(q,t)=t^3+qt+qt^2+q^2t+q^3. \]
\end{example}

For larger values of $n$, it is convenient to display the coefficients
of $\Cat_n(q,t)$ in a matrix, where the entry $i$ rows to the right
and $j$ columns above the lower-left corner is the coefficient of $q^it^j$
in $\Cat_n(q,t)$. For instance, $\Cat_5(q,t)=t^{10}+q(t^9+t^8+t^7+t^6)+\cdots$
is represented by the matrix
{\footnotesize
\[ \left[\begin{array}{ccccccccccc}
1 & . & . & . & . & . & . & . & . & . & . \\
. & 1 & . & . & . & . & . & . & . & . & . \\
. & 1 & 1 & . & . & . & . & . & . & . & . \\
. & 1 & 1 & 1 & . & . & . & . & . & . & . \\
. & 1 & 2 & 1 & 1 & . & . & . & . & . & . \\
. & . & 1 & 2 & 1 & 1 & . & . & . & . & . \\
. & . & 1 & 2 & 2 & 1 & 1 & . & . & . & . \\
. & . & . & 1 & 2 & 2 & 1 & 1 & . & . & . \\
. & . & . & . & 1 & 1 & 2 & 1 & 1 & . & . \\
. & . & . & . & . & . & 1 & 1 & 1 & 1 & . \\
. & . & . & . & . & . & . & . & . & . & 1 
\end{array}\right], \]
}
where zeroes are denoted by dots for visual clarity.

To prove joint symmetry of $\Cat_n(q,t)$, it is sufficient to show
that each diagonal of the coefficient matrix has the joint symmetry
property. For instance, it is not too hard to show (see Lemma~\ref{lem:C0-opp}
below) that the main (highest)
diagonal consists of $1$'s running southeast from position $(0,\binom{n}{2})$
to position $(\binom{n}{2},0)$. In general, the combinatorial complexity
increases as we move southwest from this main diagonal (in a sense made
more precise later).  The next definition introduces a statistic that keeps 
track of which diagonal each object belongs to.

\begin{definition}
For a Dyck vector $v\in\DV_n$, define the \emph{deficit of $v$} to
be $$\defc(v)=\binom{n}{2}-\area(v)-\dinv(v).$$ (We will see below that
$\defc(v)\geq 0$.) Let $\DV_{n,k}=\{v\in\DV_n:\defc(v)=k\}$. For all $n,k\geq 
0$, define the \emph{$q,t$-Catalan numbers of order $n$ and level $k$} by
\[ \Cat_{n,k}(q,t)=\sum_{v\in\DV_{n,k}} q^{\area(v)}t^{\dinv(v)}. \] 
\end{definition}

\begin{example}
The Dyck vector $v=(0,1,1,0,1,2,2,0)$ has $\defc(v)=\binom{8}{2}-7-12=9$.
Referring to the coefficient matrix above, we find that
\begin{eqnarray*}
\Cat_{5,0}(q,t)&=&q^{10}+qt^9+q^2t^8+\cdots+t^{10}, \\
\Cat_{5,1}(q,t)&=&qt^8+q^2t^7+\cdots+q^8t, \\
\Cat_{5,2}(q,t)&=&qt^7+2q^2t^6+2q^3t^5+2q^4t^4+2q^5t^3+2q^6t^2+q^7t,\\
\Cat_{5,3}(q,t)&=&qt^6+q^2t^5+2q^3t^4+2q^4t^3+q^5t^2+q^6t,\\
\Cat_{5,4}(q,t)&=&q^2t^4+q^3t^3+q^4t^2, 
\end{eqnarray*}
and $\Cat_{5,k}(q,t)=0$ for all $k>4$.  
\end{example}

Our first main result is a combinatorial proof of the following theorem.
\begin{theorem}\label{thm:jsym-levelk}
For all $n\geq 0$ and all $k\leq 9$, $\Cat_{n,k}(q,t)=\Cat_{n,k}(t,q)$.
\end{theorem}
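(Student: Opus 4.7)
The strategy is to construct, for each $n \ge 0$ and each $k \in \{0,1,\ldots,9\}$, an explicit bijection $\varphi_{n,k}:\DV_{n,k}\to\DV_{n,k}$ satisfying $\area(\varphi_{n,k}(v))=\dinv(v)$ and $\dinv(\varphi_{n,k}(v))=\area(v)$. Because $\defc$ depends only on $\area+\dinv$ (for fixed $n$), such a map automatically preserves the level $k$, so summing $q^{\area}t^{\dinv}$ over orbits of $\varphi_{n,k}$ immediately yields $\Cat_{n,k}(q,t)=\Cat_{n,k}(t,q)$.

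Following the program hinted at by the abstract, the plan is to build $\varphi_{n,k}$ from structural data that is independent of $n$. I would first establish a classification of Dyck vectors at level $k$ into a finite list of \emph{combinatorial types}, each specified by a bounded amount of ``defect'' data (for example, a small partition or a chain of nested partitions); the ambient parameter $n$ only records how much ``staircase'' sits between or around the defect. The base case $k=0$ is dispatched using Lemma~\ref{lem:C0-opp}: each value on the main antidiagonal is attained exactly once, so matching area $a$ with area $\binom{n}{2}-a$ is patently an $\area$-$\dinv$ swap.

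For $1\le k\le 9$, I would carry out three steps. (i) Enumerate the types at level $k$ and describe each type as an explicit parametric family of Dyck vectors indexed by positions where the defect data are inserted into the underlying staircase of size $n$. (ii) Compute the contribution of each type to $(\area,\dinv)$, separating the ``staircase'' contribution, which is large and symmetric in a controlled way independent of the chosen insertion positions, from the ``defect'' contribution, which is small and depends only on the type. (iii) Define an involution on the set of types sending each type to an opposite type (or to itself) in such a way that the corresponding pairs of defect contributions to $\area$ and to $\dinv$ are interchanged. Lifting this type-level involution back to individual Dyck vectors at each $n$ then produces $\varphi_{n,k}$.

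The main obstacle will be the case analysis at $k=8$ and $k=9$, where the number of distinct types is considerable. Some types have obvious opposite partners (for instance, obtained by conjugating an associated partition), some are self-opposite and contribute to the diagonal $q^a t^a$ terms, and a few require nontrivial cross-type pairings that cannot be read off a single local rule. Certifying that the proposed list of types is \emph{exhaustive} and that the type-level involution is well-defined and truly interchanges the defect contributions for every type at $k=7,8,9$ is the computationally demanding heart of the argument; a single stray type would break joint symmetry at that level, so exhaustiveness must be verified explicitly, likely supported by computer enumeration of $\DV_{n,k}$ for small $n$ to corroborate the structural classification.
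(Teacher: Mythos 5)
Your overall philosophy --- organize level-$k$ objects by $n$-independent ``defect'' data and reduce joint symmetry to a finite check --- is indeed the philosophy of the paper, but as written the proposal has two genuine gaps, and they are exactly where the real work lies. First, nothing in your plan explains why any finite verification certifies the statement for \emph{all} $n$. Your step (i) asserts a classification into ``a finite list of combinatorial types,'' but the level-$k$ objects form an infinite set (even the number of $(\area,\dinv)$ values realized grows with $n$), and closing the argument requires a concrete mechanism: in the paper this is the operator $\nu$ (which preserves $\defc$, raises $\dinv$ by $1$, lowers $\area_n$ by $1$), the special partitions $\gamma_\mu$, and the enumeration $|\DP_{\ast,k}(d)|=p_{\leq d}(k)$ obtained from the Loehr--Warrington equidistribution of $\dinv$ with length. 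Those facts show that every level-$k$ object with $\dinv\geq\binom{k+2}{2}$ is $\nu^m(\gamma_\mu)$ for some $\mu\in\Par(k)$, hence only finitely many ``exceptional'' objects exist, all of order at most some $n_k$; only then does checking $n\leq n_k+1$ suffice (this is the reduction in the proof of Theorem~\ref{mainthm2}). Your suggestion to ``corroborate'' exhaustiveness by enumerating $\DV_{n,k}$ for small $n$ cannot substitute for this: without a structural reason that large-$n$ behavior is controlled by the chains, small-$n$ data prove nothing about large $n$.

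Second, your step (ii)--(iii) claim that the ``staircase contribution\dots is symmetric in a controlled way independent of the chosen insertion positions'' assumes precisely what must be proved. For fixed $n$, each $n$-independent family is truncated by the condition $\Delta(\gamma)\leq n$, and the set of realized $(\dinv,\area_n)$ pairs inside a family is \emph{not} an unbroken interval: it has gaps, and joint symmetry holds only because the pattern of present/absent values is a palindrome. Establishing this is the technical core of the paper (the analysis of $S_{n,k}$ and $T_{n,k}$ via $\tau(\ell)=\lceil k/\ell\rceil$, $s(\ell)=\ell+\tau(\ell)+1$, and the remainder identities of Lemma~\ref{easylemma}, yielding the palindromic patterns~\eqref{general case}, \eqref{degenerate case}, \eqref{almosthook case a}--\eqref{almosthook case b}), and it is also where cross-type pairings arise (hook families are self-opposite, while the almost-hook families $\C_{(ab-b-1,b-1)}$ and $\C_{(ab-a-1,a-1)}$ must be paired with each other). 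Until you supply (a) the reduction to finitely many $n$ and (b) the palindromicity of the truncated families, the proposal is a program rather than a proof of Theorem~\ref{thm:jsym-levelk}.
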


This result will emerge from a framework of (partially proved) conjectures
regarding the combinatorial structure of the sets $\DV_{n,k}$. To state
these conjectures, we must first describe an alternate formulation of the
$q,t$-Catalan numbers phrased in terms of Dyck partitions.

\subsection{Definition of $q,t$-Catalan Numbers based on Dyck Partitions.}
\label{subsec:def-dyck-ptns}

An \emph{integer partition} is a sequence $\mu=(\mu_1,\mu_2,\ldots,\mu_s)$
of weakly decreasing nonnegative integers. We set
$|\mu|=\mu_1+\mu_2+\cdots+\mu_s$ and let $\ell(\mu)$ be the number
of nonzero entries in $\mu$.  We identify sequences that differ only
in the number of trailing zeroes; for example, $(3,3,1)$ and $(3,3,1,0)$
and $(3,3,1,0,0,0)$ all represent the same partition.
Let $\Par(n)$ be the set of partitions $\mu$ with $|\mu|=n$,
and let $\Par=\bigcup_{n=0}^{\infty} \Par(n)$ 
be the set of all integer partitions.

Formally, the \emph{diagram} of a partition $\mu$ is the set 
$$\dg(\mu)=\{(i,j)\in\Z_{>0}^2: 1\leq i\leq\ell(\mu),1\leq j\leq\mu_i\}.$$ 
The \emph{conjugate} of $\mu$ is the partition $\mu'$ with diagram
$\dg(\mu')=\{(j,i):(i,j)\in\dg(\mu)\}$.
We usually visualize $\dg(\mu)$ as an array of left-justified unit boxes,
with $\mu_i$ boxes in the $i$'th row from the top. For example,
the diagram of the partition $\gamma=(7,4,3,3,3,1,0,0)$ appears northwest
of the Dyck path in Figure~\ref{fig:object}. That figure suggests a
correspondence between Dyck paths of order $n$ and integer partitions 
contained in a staircase shape. This is formalized in the next definition.

\begin{definition}
For each $n\geq 0$, let $\Delta_n$ be the partition 
$(n-1,n-2,\ldots,3,2,1,0)\in\Par(\binom{n}{2})$.
We call $\gamma\in\Par$ a \emph{Dyck partition of order $n$} iff
$\dg(\gamma)\subseteq\dg(\Delta_n)$, which means
that $\ell(\gamma)<n$ and $\gamma_i\leq n-i$ for all $1\leq i<n$.
Let $\DP_n$ be the set of Dyck partitions of order $n$.
For $\gamma\in\Par$, let $\Delta(\gamma)$ be the minimum $n$
such that $\gamma\in\DP_n$.
\end{definition}

The set of all integer partitions is the increasing union of the
sets $\DP_n$, i.e.,
\begin{equation}\label{eq:DP-inclusions}
 \DP_0\subseteq\DP_1\subseteq\DP_2\subseteq\cdots\subseteq
   \DP_n\subseteq\cdots\subseteq\Par=\bigcup_{n=0}^{\infty} \DP_n. 
\end{equation}
We obtain a bijection from $\DP_n$ to the set of Dyck paths of order $n$
by mapping $\gamma\in\DP_n$ to the frontier of $\dg(\gamma)$ when
we embed the diagram of $\gamma$ in the diagram of $\Delta_n$.
We can then transform this Dyck path into a Dyck vector by counting
area cells to the right of the path in each row. It is routine to establish
the following formulas for bijections $\dpmap_n:\DV_n\rightarrow\DP_n$
and $\dvmap_n=\dpmap_n^{-1}:\DP_n\rightarrow\DV_n$, which will be needed later:
\begin{eqnarray*}
\dpmap_n(v_1,v_2,\ldots,v_n)&=& 
(n-1-v_n,n-2-v_{n-1},\ldots,n-i-v_{n-i+1},\ldots,1-v_2,0-v_1); \\
\dvmap_n(\gamma_1,\gamma_2,\ldots,\gamma_n) &=&
(0-\gamma_n,1-\gamma_{n-1},\ldots,i-\gamma_{n-i},\ldots,n-1-\gamma_1).
\end{eqnarray*}
In the second formula, we pad $\gamma$ with zero parts so that 
$\gamma$ has exactly $n$ nonnegative parts. 

Next we define area and dinv statistics for Dyck partitions.
First we review the notions of arm, leg, coarm, and coleg for
cells in a partition diagram.

\begin{definition}
Given $\gamma\in\Par$ and a cell $c=(i,j)\in\dg(\gamma)$,
the \emph{arm} of $c$ is $\arm(c)=\gamma_i-j$, which is the
number of boxes strictly right of $c$ in its row of the diagram.
The \emph{leg} of $c$ is $\leg(c)=\gamma'_j-i$, which is the
number of boxes strictly below $c$ in its column of the diagram.
The \emph{coarm} of $c$ is $\arm'(c)=j-1$, which is the
number of boxes strictly left of $c$ in its row of the diagram.
The \emph{coleg} of $c$ is $\leg'(c)=i-1$, which is the
number of boxes strictly above $c$ in its column of the diagram.  
\end{definition}

For example, cell $c=(2,1)$ in the Dyck partition $\gamma$ shown in
Figure~\ref{fig:object} has $\arm(c)=3$, $\leg(c)=4$, $\arm'(c)=0$,
and $\leg'(c)=1$.

\begin{definition}\label{DP*}
Fix a Dyck partition $\gamma\in\DP_n$. Let $\dinv(\gamma)$
be the number of cells $c\in\dg(\gamma)$ with $\arm(c)-\leg(c)\in\{0,1\}$.
Let $\area_n(\gamma)=\binom{n}{2}-|\gamma|$, which is the
number of cells in the skew partition $\Delta_n/\gamma$.
Define the deficit $\defc(\gamma)=\binom{n}{2}-\area_n(\gamma)-\dinv(\gamma)
=|\gamma|-\dinv(\gamma)\geq 0$, which can also be described as
the number of cells $c\in\dg(\gamma)$ with $\arm(c)-\leg(c)\not\in\{0,1\}$.
Let $\DP_{n,k}=\{\gamma\in\DP_n:\defc(\gamma)=k\}$ be
the set of Dyck partitions of \emph{order $n$ and level $k$}.
We write $\DP_{\ast,k}=\bigcup_{n=0}^{\infty} \DP_{n,k}$ for the set of 
all Dyck partitions of level $k$ (of any order).
\end{definition}

For each $k\geq 0$, we have inclusions 
\begin{equation}\label{eq:DPk-inclusions}
 \DP_{0,k}\subseteq\DP_{1,k}\subseteq\DP_{2,k}\subseteq\cdots\subseteq
   \DP_{n,k}\subseteq\cdots\subseteq\DP_{\ast,k}
   =\bigcup_{n=0}^{\infty} \DP_{n,k}. 
\end{equation}

\begin{example}
The partition $\gamma=(5,5,4,3,1,1,0)$ has $|\gamma|=19$,
$\dinv(\gamma)=14$, $\defc(\gamma)=5$, $\Delta(\gamma)=7$,
$\area_7(\gamma)=2$, $\area_8(\gamma)=9$, $\area_9(\gamma)=17$, and so on.
The diagram $\dg(\gamma)$ is shown below; the 14 cells $c$ such 
that $\arm(c)-\leg(c)\in\{0,1\}$ are marked with asterisks.
\[ \tableau {
{}&{\ast}&{}&{}&{}\\
{\ast}&{\ast}&{\ast}&{\ast}&{\ast}\\
{\ast}&{\ast}&{\ast}&{\ast}&\\
{\ast}&{\ast}&{\ast}&\\
{}\\
{\ast} } \]
We have $\dvmap_7(\gamma)=v=(0,0,1,0,0,0,1)$; note that
$\area(v)=2=\area_7(\gamma)$ and $\dinv(v)=14=\dinv(\gamma)$.
\end{example}

One may check that for all $n,k\geq 0$,
\[ \Cat_{n,k}(q,t)=
\sum_{\gamma\in\DP_{n,k}} q^{\area_n(\gamma)}t^{\dinv(\gamma)}\mbox{ and }
\Cat_n(q,t)= \sum_{\gamma\in\DP_{n}} q^{\area_n(\gamma)}t^{\dinv(\gamma)}. \]
It suffices to verify that the bijection $\dvmap_n:\DV_n\rightarrow\DP_n$ 
sends the statistics
$(\area,\dinv,\defc)$ on Dyck vectors to the corresponding statistics
$(\area_n,\dinv,\defc)$ on Dyck partitions. This certainly holds for
the area statistics, and it is not too hard to prove for the dinv
statistics (for details, see~\cite[Lemma 4.4.1 and 6.3.3]{HHLRU}). 
It then follows that the deficit statistic is also preserved, 
which also establishes the earlier claim that
the deficit of a Dyck vector is always nonnegative.

One advantage of using Dyck partitions is that the dinv and deficit
of a partition $\gamma\in\DP_n$ do not change when $n$ is increased,
and the area changes in a predictable way: $\area_{n+1}(\gamma)
=\area_n(\gamma)+n$. The chain of set inclusions~\eqref{eq:DP-inclusions} 
for Dyck partitions translates into a chain of inclusion maps
\begin{equation}\label{eq:DV-inclusions}
\DV_0\stackrel{\iota_0}{\hookrightarrow}
\DV_1\stackrel{\iota_1}{\hookrightarrow}
\DV_2\stackrel{\iota_2}{\hookrightarrow}\cdots
\stackrel{\iota_{n-1}}{\hookrightarrow}
\DV_n\stackrel{\iota_n}{\hookrightarrow}
\DV_{n+1}\stackrel{\iota_{n+1}}{\hookrightarrow}\cdots,
\end{equation}
for Dyck vectors, where $\iota_n(v_1,\ldots,v_n)=(0,v_1+1,v_2+1,\ldots,v_n+1)$
for $(v_1,\ldots,v_n)\in\DV_n$. For $v\in\DV_n$,
one immediately verifies that $\dpmap_{n+1}(\iota_n(v))=\dpmap_n(v)$,
which says that the inclusions in~\eqref{eq:DP-inclusions}
and~\eqref{eq:DV-inclusions} correspond under the bijection
between Dyck vectors and Dyck partitions.
Also note that if $\gamma\in\DP_n$ has associated Dyck vector 
$(v_1,\ldots,v_n)$, then $\Delta(\gamma)=n$ iff $v_j=0$ for some $j>1$.

\subsection{Opposite Objects and Opposite Subsets}
\label{subsec:opposites}

The following terminology will be helpful for discussing joint symmetry.

\begin{definition}
Two Dyck vectors $v,w\in\DV_n$ are called a 
\emph{pair of opposites} iff $\area(v)=\dinv(w)$ and $\dinv(v)=\area(w)$.
Two subsets $V,W\subseteq\DV_n$ are called \emph{opposite to each other} 
iff there exists a unique bijection $f:V\rightarrow W$ such that
$v$ and $f(v)$ are a pair of opposites for all $v\in V$.
(The uniqueness of the bijection implies that for $v\neq v'$ in $V$,
 $(\area(v),\dinv(v))\neq (\area(v'),\dinv(v'))$.)
Two Dyck partitions $\beta,\gamma\in\DP_n$ are called a
\emph{pair of $n$-opposites} iff $\area_n(\beta)=\dinv(\gamma)$
and $\dinv(\beta)=\area_n(\gamma)$.
Two subsets $B,C\subseteq\DP_n$ are called \emph{$n$-opposite to each other}
iff there exists a unique bijection $g:B\rightarrow C$ such that
$\gamma$ and $g(\gamma)$ are a pair of $n$-opposites for all $\gamma\in B$.  
In this definition, we allow $v=w$, $V=W$, $\beta=\gamma$, and $B=C$.
\end{definition}

\begin{example}\label{ex:special-path}
We define an injective map $\dyckmap:\Par(n)\rightarrow\DV_n$ as follows.
For $\lambda=(\lambda_1\geq\lambda_2\geq\cdots)\in\Par(n)$, let 
$\dyckmap(\lambda)\in\DV_n$ be the sequence consisting of $\lambda_1$ 
zeroes, followed by $\lambda_2$ ones, 
$\lambda_3$ twos, and so on. For instance, when $\lambda=(4,2,1,1,1)\in\Par(9)$,
$\dyckmap(\lambda)=(0,0,0,0,1,1,2,3,4)\in\DV_9$. We claim that for all 
partitions $\lambda$, $\dyckmap(\lambda)$ and $\dyckmap(\lambda')$ 
are a pair of opposites.  To see this, note that
\[ \area(\dyckmap(\lambda))=0\lambda_1+1\lambda_2+2\lambda_3+\cdots
  =\sum_{c\in\dg(\lambda)} \leg'(c); \]
\[ \dinv(\dyckmap(\lambda))=\binom{\lambda_1}{2}+\binom{\lambda_2}{2}
 +\binom{\lambda_3}{2} +\cdots =\sum_{c\in\dg(\lambda)} \arm'(c). \]
The natural bijection $\dg(\lambda)\rightarrow\dg(\lambda')$ given by
$(i,j)\mapsto (j,i)$ interchanges coarms and colegs, so that
$\area(\dyckmap(\lambda'))=\dinv(\dyckmap(\lambda))$ and
$\dinv(\dyckmap(\lambda'))=\area(\dyckmap(\lambda))$, as needed.  
\end{example}

One corollary to Theorem~\ref{thm:jsym-levelk} 
and Example~\ref{ex:special-path} is a combinatorial proof of 
the joint symmetry $C_n(q,t)=C_n(t,q)$ for all $n\leq 7$.
This follows since every Dyck vector $v$ of length at most $7$
satisfies $\defc(v)\leq 9$, with four exceptions. These four
exceptional objects are $\dyckmap((3,3,1))$, $\dyckmap((3,3,1)')$,
$\dyckmap((3,2,1,1))$, and $\dyckmap((3,2,1,1)')$, which form
two pairs of opposite objects by the preceding example.

\subsection{Conjectural Decompositions of Level $k$ Objects.}
\label{subsec:intro-conjs}

We can now describe our conjectured structural decomposition of
the sets $\DV_{n,k}$.

\begin{conjecture}\label{conj:DV}
Fix an integer $k\geq 0$. For each $n\geq 0$ and each $\mu\in\Par(k)$,
there exist (possibly identical) subsets $\DV_{n,\mu}$ and
$\overline{\DV}_{n,\mu}$ of $\DV_{n,k}$ satisfying the following:
\begin{itemize}
\item[(a)] $\DV_{n,k}$ is the disjoint union of the sets
$\{\DV_{n,\mu}:\mu\in\Par(k)\}$, and $\DV_{n,k}$ is also the disjoint union
of the sets $\{\overline{\DV}_{n,\mu}:\mu\in\Par(k)\}$.
\item[(b)] $\DV_{n,\mu}$ and $\overline{\DV}_{n,\mu}$ are opposite to 
 each other.
\item[(c)] If $\DV_{n,\mu}$ contains $\dyckmap(\lambda)$,
 then $\overline{\DV}_{n,\mu}$ contains $\dyckmap(\lambda')$.  
\item[(d)] $\iota_n(\DV_{n,\mu})\subseteq \DV_{n+1,\mu}$
 and $\iota_n(\overline{\DV}_{n,\mu})\subseteq \overline{\DV}_{n+1,\mu}$,
 where $\iota_n$ is the injection from~\eqref{eq:DV-inclusions}.
\item[(e)] $\lim_{n\rightarrow\infty} |\DV_{n,\mu}|
           =\lim_{n\rightarrow\infty} |\overline{\DV}_{n,\mu}|=\infty$.
\end{itemize}
\end{conjecture}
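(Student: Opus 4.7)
The plan is to recast the conjecture entirely in terms of Dyck partitions. Because $\dpmap_n$ intertwines the statistics $(\area,\dinv,\defc)$ on $\DV_n$ with $(\area_n,\dinv,\defc)$ on $\DP_n$, because $\dinv$ and $\defc$ are $n$-independent on $\DP_{\ast,k}$, and because $\iota_n$ corresponds to the canonical inclusion $\DP_{n,k}\hookrightarrow\DP_{n+1,k}$, the conjecture is equivalent to producing \emph{stable} subsets $\DP_{\ast,\mu},\overline{\DP}_{\ast,\mu}\subseteq\DP_{\ast,k}$ such that $\DP_{n,\mu}:=\DP_{\ast,\mu}\cap\DP_{n,k}$ and $\overline{\DP}_{n,\mu}$ are $n$-opposite for every $n$, and such that the analogues of (a)--(e) hold.

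The first step is to pin down, for each $\mu\in\Par(k)$, a canonical \emph{seed} partition $\gamma_\mu\in\DP_{\ast,k}$. Condition (c) essentially forces the image of $\dyckmap$ to determine the labels of certain blocks, so a natural guess is to take $\gamma_\mu=\dpmap_{|\mu|}(\dyckmap(\mu))$, promoted by $\iota$'s to the smallest order where its deficit equals $k$, and to build the opposite seed from $\mu'$. I would then grow $\DP_{\ast,\mu}$ as the closure of $\gamma_\mu$ under $\iota$ and under a family of explicit local moves on the Ferrers diagram --- adjoining thin strips along the outer boundary, or relocating an interior cell in a way that preserves $\defc$ --- and define the $n$-opposition bijection $g_{n,\mu}$ locally on the seed, extending equivariantly with respect to those moves.

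Conditions (b), (c), and (d) should then hold essentially by construction, and (e) is automatic from the infinite $\iota$-orbit of each seed. The nontrivial content is condition (a): the blocks $\{\DP_{\ast,\mu}:\mu\in\Par(k)\}$ must be disjoint and exhaust $\DP_{\ast,k}$, equivalently $|\DP_{n,k}|=\sum_{\mu\in\Par(k)}|\DP_{n,\mu}|$ at every $n$, and similarly for the barred decomposition. Checking this amounts to an enumerative identity that has no obvious closed form.

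The main obstacle is precisely step (a), together with showing that the two decompositions (the $\DP$-decomposition and its barred counterpart) are compatible under the prescribed seed-to-seed pairing. For small $k$ one can plausibly verify everything by cataloguing the seeds and growth rules and checking against a finite table of $|\DP_{n,k}|$ values, which is presumably how Theorem~\ref{thm:jsym-levelk} is obtained in the range $k\leq 9$. For general $k$, I anticipate that a deeper organizing principle is required --- perhaps a recursion for $\Cat_{n,k}(q,t)$ that is stable under $n\mapsto n+1$ and whose solution is naturally indexed by $\Par(k)$ --- so that the decomposition arises structurally rather than from exhaustive case analysis.
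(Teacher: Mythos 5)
You have not proved this statement, and neither does the paper: it is a conjecture, established in the paper only for $k\leq 9$ (via the stronger Conjecture~\ref{strong_conj} and Theorem~\ref{mainthm2}, which reduce the claim for fixed $k$ to a finite computation) and, for the blocks attached to hook and almost-hook shapes, via the explicit chains of Theorems~\ref{hook_chain} and~\ref{almost_hook_chain}. Your reduction to Dyck partitions and your guess that the $k\leq 9$ range is settled by a finite check are both in line with the paper, but the concrete choices in your sketch have genuine defects. First, your seed is wrong: for $\mu\in\Par(k)$ the object $\dpmap_{|\mu|}(\dyckmap(\mu))$ has deficit equal to the number of pairs of cells of $\mu$ lying in distinct rows and columns (cf.\ Conjecture~\ref{wdssPar}(3)), not $k$; e.g.\ for $\mu=(2,1)$ its deficit is $1$, not $3$, so it is not in $\DP_{\ast,k}$ at all, and ``promoting by $\iota$'' cannot repair this because $\iota_n$ preserves both $\dinv$ and $\defc$. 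You have conflated the two index sets: in condition (c) the partitions $\lambda\in\Par(n)$ label special objects inside $\DV_{n,k}$, while $\mu\in\Par(k)$ labels blocks. The paper's canonical seed for the block $\C_\mu$ is the partition $\gamma_\mu$ of Definition~\ref{def:natural}, obtained by deleting the cells $u_{\mu_1+\ell(\mu)},u_{\mu_2+\ell(\mu)-1},\ldots$ from the staircase $\I_{0,\zeta(\mu)}$, and Lemma~\ref{kappa_natural} is exactly the verification that $\defc(\gamma_\mu)=k$.

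Second, ``adjoining thin strips or relocating an interior cell, extending the opposition bijection equivariantly'' is not a construction: the paper's engine is the operator $\nu$ of Lemma~\ref{lem:propertiesnu}, which preserves $\defc$, raises $\dinv$ by $1$ and lowers $\area_n$ by $1$, so each chain $R(\gamma)$ fills a run of consecutive exponent pairs on one antidiagonal; $n$-opposition of a block then becomes a palindromicity statement about the sequence $i\mapsto\Delta(\cdot)$ (the sequences $A_{(k)}$, $A_S$, $A_{S'}$), which is what the arithmetic of $\ell\mapsto\ell+\lceil k/\ell\rceil+1$ in Sections~\ref{sec:hook_sec}--\ref{sec:almost_hook_sec} actually proves. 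Without an analogous structural lemma, your claim that (b)--(d) ``hold by construction'' is unsupported. Third, your assessment of (a) and (e) misses the paper's key enumerative input: Lemma~4.9 gives $|\DP_{\ast,k}(d)|=p_{\leq d}(k)$ (via Loehr--Warrington equidistribution of $\dinv$ and length), and Lemma~\ref{almost_all} shows every level-$k$ partition with $\dinv\geq\binom{k+2}{2}$ equals $\nu^m(\gamma_\mu)$ for a unique $(\mu,m)$; together these make the exhaustion in (a) checkable and leave only finitely many sporadic objects, which is precisely how the $k\leq 9$ case is closed. Finally, (e) is not ``automatic from the infinite $\iota$-orbit of each seed'': the $\iota$-orbit of one Dyck vector corresponds to a single partition and contributes exactly one element of $\DV_{n,\mu}$ for each large $n$; unboundedness of $|\DV_{n,\mu}|$ requires the block to contain infinitely many distinct partitions, which is supplied by the $\nu$-chain on $\gamma_\mu$, not by $\iota$.
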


An even stronger, but more technical, conjecture is stated
in Section~\ref{sec:stronger_conj} below.
We will prove these conjectures for all $k\leq 9$;
this clearly implies Theorem~\ref{thm:jsym-levelk} for these values of $k$.
We also explicitly construct certain sets $\DV_{n,\mu}$ and
$\overline{\DV}_{n,\mu}$ satisfying (b) and (c) when
$\lambda$ is a hook shape (i.e., $\lambda_2\leq 1$) or an almost-hook shape 
(i.e., $\lambda_2=2$ and $\lambda_3\leq 1$).  

Here is an equivalent formulation of Conjecture~\ref{conj:DV}
in terms of Dyck partitions. 

\begin{conjecture}\label{conj:DP}
Fix an integer $k\geq 0$. For each $\mu\in\Par(k)$,
there exist (possibly identical) infinite subsets $\DP_{\mu}$
and $\overline{\DP}_{\mu}$ of $\DP_{\ast,k}$ satisfying the following:
\begin{itemize}
\item[(a)] $\DP_{\ast,k}$ is the disjoint union of the sets
$\{\DP_{\mu}:\mu\in\Par(k)\}$, and $\DP_{\ast,k}$ is also the disjoint union
of the sets $\{\overline{\DP}_{\mu}:\mu\in\Par(k)\}$.
\item[(b)] For all $n\geq 0$, $\DP_{\mu}\cap\DP_n$
and $\overline{\DP}_{\mu}\cap\DP_n$ are $n$-opposite to each other.  
\item[(c)] If $\DP_{\mu}$ contains $\dpmap_n(\dyckmap(\lambda))$
 where $\lambda\in\Par(n)$, then $\overline{\DP}_{\mu}$ contains 
 $\dpmap_n(\dyckmap(\lambda'))$.  
\end{itemize}
\end{conjecture}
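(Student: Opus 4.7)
The plan is to prove Conjecture~\ref{conj:DP} by induction on the level $k$, combined with explicit construction of the families $\{\DP_\mu\}_{\mu\in\Par(k)}$ and $\{\overline{\DP}_\mu\}_{\mu\in\Par(k)}$ inside $\DP_{\ast,k}$. The crucial numerical input is that every $\gamma\in\DP_{\ast,k}$ satisfies $\dinv(\gamma)=|\gamma|-k$, so an $n$-opposite pair $\gamma,\bar\gamma\in\DP_n$ must obey $|\gamma|+|\bar\gamma|=\binom{n}{2}+k$. Within each $\DP_n$ the required bijection therefore pairs Dyck partitions of small size with Dyck partitions of large size, and the partner of a fixed $\gamma\in\DP_{\ast,k}$ grows without bound as $n$ grows.

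For the base case $k=0$ the only indexing partition is $\mu=\emptyset$, so I take $\DP_\emptyset=\overline{\DP}_\emptyset=\DP_{\ast,0}$. Direct analysis of the condition $\arm(c)-\leg(c)\in\{0,1\}$ at every cell shows that $\gamma\in\DP_{\ast,0}$ has a rigid structure with exactly one element of each size $0,1,\dots,\binom{n}{2}$ inside $\DP_{n,0}$, matching the earlier observation that the top diagonal of the coefficient matrix consists entirely of $1$'s. The required $n$-opposite bijection simply pairs the unique element of size $s$ with the unique element of size $\binom{n}{2}-s$, and condition (c) holds because $\dpmap_n(\dyckmap(\lambda))\in\DP_{\ast,0}$ forces $\lambda$ to be a single row or a single column, a property preserved under conjugation.

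The inductive step is to assign to each $\gamma\in\DP_{\ast,k}$ a combinatorial invariant $\mu(\gamma)\in\Par(k)$ that is stable under the inclusion $\DP_n\hookrightarrow\DP_{n+1}$ and symmetric with respect to the pairing $\dyckmap(\lambda)\leftrightarrow\dyckmap(\lambda')$. I would construct $\mu(\gamma)$ by iteratively peeling off a distinguished extremal cell with $\arm-\leg\notin\{0,1\}$, recording the contribution of each peel, and assembling the resulting multiset into a partition of $k$. The hook and almost-hook constructions already given in the paper supply explicit $n$-opposite sets containing $\dpmap_n(\dyckmap(\lambda))$ for those shape classes of $\lambda$, thereby forcing part (c) for a substantial collection of anchor elements. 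Property (b) is then verified class by class by exhibiting, within each $\DP_\mu$, an infinite chain of anchor partitions extended in $n$ via the inclusion $\iota_n$ and paired with partners of the complementary size dictated by the numerical constraint above.

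The principal obstacle, and the reason the paper can only reach $k\le 9$, is the construction of the $n$-opposite bijections for a general $\mu\in\Par(k)$ outside the hook and almost-hook families. Because $|\gamma|+|\bar\gamma|=\binom{n}{2}+k$ grows with $n$, no fixed involution on $\DP_{\ast,k}$ can realize part (b), and one is forced to match partitions of widely differing sizes at every level simultaneously. I expect the only workable route is the stronger conjecture stated in Section~\ref{sec:stronger_conj}, which realizes Conjecture~\ref{conj:DP} through infinite chains of partitions independent of $n$; showing that such chains exist and satisfy (a)--(c) uniformly for every $\mu$ of arbitrary size is the main combinatorial difficulty.
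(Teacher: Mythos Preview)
The statement you are attempting is a \emph{conjecture}; the paper does not prove it in general, and your write-up does not prove it either. What you have supplied is a strategy outline that correctly identifies the constraints (the relation $|\gamma|+|\bar\gamma|=\binom{n}{2}+k$ for $n$-opposite pairs, the fact that no single $n$-independent involution can realize~(b), the role of the hook and almost-hook chains as anchors) and then candidly names the missing step in the final paragraph. That candour is appropriate, but it means the proposal has a genuine gap rather than an alternative argument.

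Concretely, two pieces are missing. First, the ``peeling'' invariant $\mu(\gamma)$ is never defined: you say you would remove a distinguished extremal cell with $\arm-\leg\notin\{0,1\}$ and record its contribution, but you specify neither which cell is distinguished, nor what datum is recorded, nor why the resulting multiset is a partition of $k$, nor why it is stable under the inclusions $\DP_n\hookrightarrow\DP_{n+1}$; removing a cell changes the arms and legs of all cells in its row and column, so the set of deficit cells is not preserved under peeling. Second, the phrase ``by induction on $k$'' is never cashed in: nowhere do you invoke the hypothesis for level $k-1$ to produce anything at level $k$, so the inductive frame is decorative. The paper's approach is entirely different: it formulates the stronger chain conjecture (Conjecture~\ref{strong_conj}), shows that for fixed $k$ it reduces to a finite check (because every $\gamma\in\DP_{\ast,k}$ with $\dinv(\gamma)$ large enough is of the explicit form $\nu^m(\gamma_\mu)$), and then verifies $k\le 9$ by exhaustive computation, exhibiting the sets $\C_\mu$ explicitly. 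No peeling or induction on $k$ appears; the partial results for hook and almost-hook $\lambda$ are obtained by ad hoc constructions and direct analysis of the sequences $A_{(k)}$ and $A_S$, not by any uniform mechanism.
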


To deduce the previous conjecture from this one, 
take $\DV_{n,\mu}=\dvmap_n(\DP_{\mu}\cap\DP_n)$ and
$\overline{\DV}_{n,\mu}=\dvmap_n(\overline{\DP}_{\mu}\cap\DP_n)$.

\subsection{Outline of Paper.}
\label{subsec:outline}

The rest of the paper is structured as follows.
In Section~\ref{sec:hook_sec}, we construct sets 
$\C_{(k)}\subseteq\DP_{\ast,k}$ and prove that
$\C_{(k)}\cap\DP_n$ is $n$-opposite to itself
for all $n,k\geq 0$. We use these sets to show how
to satisfy Conjecture~\ref{conj:DP}(b) and (c) when $\lambda$ is a hook shape.
In Section~\ref{sec:almost_hook_sec}, we construct certain sets
$\C_{(u,v)}\subseteq\DP_{\ast,u+v}$ and prove
that $\C_{(ab-b-1,b-1)}\cap\DP_n$ and    $\C_{(ab-a-1,a-1)}\cap\DP_n$
are $n$-opposites for all $a,b\geq 2$ and $n\geq 0$.  These sets are used to 
prove Conjecture~\ref{conj:DP}(b) and (c) when $\lambda$ is an almost-hook 
shape.  In Section~\ref{sec:stronger_conj}, we state strengthened versions 
of the conjectures above and prove special cases including
the case $k\leq 9$. Some data needed for this proof are given in an Appendix.

\section{Construction for Hook Shapes}
\label{sec:hook_sec}

In this section, we define collections of Dyck partitions
$\C_{(k)}$ such that $\C_{(k)}\cap\DP_n$ is $n$-opposite
to itself for all $n$ and $k$. Moreover, for every partition 
$\lambda\in\Par(n)$ of hook shape, $\dpmap_n(\dyckmap(\lambda))$ 
belongs to some $\C_{(k)}$.

\subsection{The Operator $\nu$.}
\label{subsec:nu}

A fundamental tool for building $n$-opposite sets of Dyck partitions
is the operator $\nu$ defined next.  This operator can be viewed
as a special case
of the map defined by the present authors in~\cite[Definition 8]{LLL}
(the latter map acts on $m$-Dyck vectors satisfying certain conditions;
here $m=1$).

\begin{definition}
Suppose $\gamma$ is a Dyck partition satisfying the condition
$\gamma_1\leq\ell(\gamma)+2$. For such a partition, we define
$\nu(\gamma)=(\ell(\gamma)+1,\gamma_1-1,\gamma_2-1,\ldots,
\gamma_{\ell(\gamma)}-1)$,
which is also a partition by the hypothesis on $\gamma$. 
\end{definition}

\begin{example}
We have $\nu((5,5,4,3,1,1))=(7,4,4,3,2,0,0)=(7,4,4,3,2)$,
$\nu((7,4,4,3,2))=(6,6,3,3,2,1)$, $\nu((6,6,3,3,2,1))=(7,5,5,2,2,1)$,
and so on. On the other hand, $\nu((7,3,1,1))$ is not defined.
The empty partition $\gamma=(0)=()$ has $\gamma_1=0=\ell(\gamma)$,
so $\nu(\gamma)=(1)$.  
\end{example}

Informally, the next lemma shows that applying $\nu$ allows us to
``move one step northwest'' along a diagonal in the coefficient matrix
for $\Cat_n(q,t)$.

\begin{lemma}\label{lem:propertiesnu}
For all $\gamma\in\Par$ such that $\nu(\gamma)$ is defined
and all $n$ such that $\gamma,\nu(\gamma)\in\DP_n$, 
$$\begin{array}{lcl}
\dinv(\nu(\gamma))&=&\dinv(\gamma) + 1;\\
\area_n(\nu(\gamma))&=&\area_n(\gamma) - 1;\\
\defc(\nu(\gamma))&=&\defc(\gamma).  
\end{array}$$ 
\end{lemma}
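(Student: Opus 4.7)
The plan is to establish the three identities in sequence, starting from the observation that the deficit identity is immediate from the other two via the defining relation $\defc=\binom{n}{2}-\area_n-\dinv$. The area identity reduces to the one-line computation
\[
|\nu(\gamma)|=(\ell+1)+\sum_{i=1}^{\ell}(\gamma_i-1)=|\gamma|+1,
\]
where $\ell=\ell(\gamma)$, yielding $\area_n(\nu(\gamma))=\area_n(\gamma)-1$.

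The heart of the proof is the dinv identity. I would introduce the map $\phi\colon\{(i,j)\in\dg(\gamma):j\geq 2\}\to\{(i,j)\in\dg(\nu(\gamma)):i\geq 2\}$ defined by $\phi(i,j)=(i+1,j-1)$. Using the identities $\nu(\gamma)_{i+1}=\gamma_i-1$ and $\nu(\gamma)'_{j-1}=1+\gamma'_j$ (the latter valid for $2\leq j\leq\ell+2$, exactly the range allowed by the hypothesis $\gamma_1\leq\ell+2$), a short calculation shows that $\phi$ is a bijection that preserves both arm and leg, and therefore preserves $\arm-\leg$. Consequently the cells in the domain of $\phi$ contribute equally to $\dinv(\gamma)$ and $\dinv(\nu(\gamma))$, and the identity reduces to comparing only the boundary cells: the first column $\{(i,1)\in\dg(\gamma):1\leq i\leq\ell\}$ and the first row $\{(1,j)\in\dg(\nu(\gamma)):1\leq j\leq\ell+1\}$. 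Computing arm minus leg for these cells, the desired statement becomes the partition identity
\[
\#\{k\in[2,\ell+2]:\gamma'_k+k\in\{\ell+1,\ell+2\}\}
\;=\;\#\{i\in[1,\ell]:\gamma_i+i\in\{\ell+1,\ell+2\}\}+1.
\]

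To prove this identity, I would first establish the classical symmetry $\{\gamma_i+i:i\geq 1\}=\{\gamma'_k+k:k\geq 1\}$ as multisets (adopting the convention $\gamma_i=0$ for $i>\ell$, and similarly for $\gamma'$); this has a one-line inductive proof, since adding a single cell at an addable corner $(p,q)$ replaces the value $p+q-1$ by $p+q$ in each of the two multisets. One then catalogues which indices outside the ranges $[1,\ell]$ and $[2,\ell+2]$ can contribute at the values $\ell+1$ or $\ell+2$: on the row side, $i=\ell+1$ and $i=\ell+2$ both always contribute (through $\gamma_i=0$); on the column side, $k=1$ always contributes (since $\gamma'_1+1=\ell+1$), while no $k>\ell+2$ can. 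A short case split on whether $\gamma_1\leq\ell$, $\gamma_1=\ell+1$, or $\gamma_1=\ell+2$ then delivers the desired $+1$ discrepancy. The main obstacle is this last bookkeeping step, but the three cases are essentially forced by the hypothesis and are quickly disposed of.
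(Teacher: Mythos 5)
Your proof is correct, but it follows a genuinely different route from the paper's. The paper proves the lemma by transporting everything to Dyck vectors via $\dvmap_n$: it records how $\nu$ acts on the vector (delete the entry $v_a=a-1$ at the maximal such index $a$ and append $a-2$, as in~\eqref{eq:nu-on-dv}) and counts the change in the vector dinv directly, citing that this is a special case of a lemma from~\cite{LLL}; this relies on the equivalence of the vector and partition dinv statistics. You instead stay entirely inside the partition diagram: the shift $(i,j)\mapsto(i+1,j-1)$ is indeed an arm- and leg-preserving bijection from $\{(i,j)\in\dg(\gamma):j\geq 2\}$ onto $\{(i,j)\in\dg(\nu(\gamma)):i\geq 2\}$ (your identity $\nu(\gamma)'_{j-1}=1+\gamma'_j$ for $2\leq j\leq\ell+2$ checks out, including at $j=\ell+2$), so the comparison reduces to the first column of $\gamma$ versus the first row of $\nu(\gamma)$, and the resulting counting identity follows from the classical multiset equality $\{\gamma_i+i\}=\{\gamma'_k+k\}$, which your addable-corner induction proves correctly. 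One small remark: the final ``case split on $\gamma_1$'' you anticipate is unnecessary --- once you note that outside the restricted ranges the row side always contributes exactly at $i=\ell+1,\ell+2$ and the column side exactly at $k=1$ (and nothing beyond $\ell+2$ on either side), the multiset equality gives $\mathrm{LHS}+1=\mathrm{RHS}+2$ uniformly, which is the desired $+1$. Your argument has the advantage of being self-contained and independent of $n$ and of the vector-level translation; the paper's vector computation has the advantage of producing the explicit formula~\eqref{eq:nu-on-dv} for $\dvmap_n(\nu(\gamma))$, which is reused in the proof of Lemma~\ref{Delta_II}, so the two approaches serve slightly different purposes in the paper's architecture.
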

\begin{proof} 
This is a special case of \cite[Lemma 9]{LLL}, so we only sketch
the proof here.  Using the formulas for $\dvmap_n$ and $\dpmap_n$,
we can describe how $\nu$ acts on Dyck
vectors as follows. Suppose $\gamma\in\DP_n$ has associated Dyck vector
$\dvmap_n(\gamma)=(v_1,v_2,\ldots,v_n)$. Let $a$ be the maximal index
such that $v_a=a-1$; then 
(provided $\nu(\gamma)$ is defined and in $\DP_n$)
\begin{equation}\label{eq:nu-on-dv}
 \dvmap_n(\nu(\gamma))=(v_1,\ldots,v_{a-1},v_{a+1},\ldots,v_n,a-2).
\end{equation}
One can check that $\nu(\gamma)$ is defined iff $a-2\leq v_n+1$, whereas
$\nu(\gamma)\in\DP_n$ iff $\ell(\gamma)<n-1$ iff $a\geq 2$ iff $0\leq a-2$.
Write $v=\dvmap_n(\gamma)$ and $v'=\dvmap_n(\nu(\gamma))$;
we know $\dinv(v)=\dinv(\gamma)$, $\dinv(v')=\dinv(\nu(\gamma))$,
$\area(v)=\area_n(\gamma)$, and $\area(v')=\area_n(\nu(\gamma))$.
We obtain $v'$ from $v$ by deleting $v_a=a-1$ and appending $a-2$ as the 
new last entry of the vector. It is then evident that $\area(v')=\area(v)-1$.
To see how $\dinv(v')$ compares to $\dinv(v)$, note that
the entries preceding $v_a$ are $(0,1,2,\ldots,a-2)$, which
do not cause any diagonal inversions with $v_a=a-1$.
So removing $v_a=a-1$ from $v$ reduces the dinv statistic by the
number of times $a-1$ or $a-2$ occurs in the sequence $(v_{a+1},\ldots,v_n)$,
When we append $a-2$, we increase the dinv statistic by the
number of times $a-1$ or $a-2$ occurs in the sequence
$(v_1,\ldots,v_{a-1},v_{a+1},\ldots,v_n)$. Since
$(v_1,\ldots,v_{a-1})=(0,1,\ldots,a-2)$ by definition of $a$,
and this prefix contains $a-2$ once and does not contain $a-1$,
we see that the net change in dinv is $+1$ when we go from $v$ to $v'$.
The formulas in the lemma follow, recalling that
$\defc(\gamma)=\binom{n}{2}-\area_n(\gamma)-\dinv(\gamma)$.  
\end{proof}

\subsection{The Dyck Partitions $\I_{j,\ell}$.}
\label{subsec:Ijl}

Next we introduce notation leading to the definition of certain 
Dyck partitions, denoted $\I_{j,\ell}$, that play a key role in the sequel.

\begin{definition}
Given integers $a,b$ with $a\geq b$, let $a\arrowdownto b$
denote the decreasing sequence $(a,a-1,a-2,\ldots,b)$. Similarly, for integers $a,b$ with $a\le b$, we denote by $a\arrowupto b$ the sequence $(a,a+1,\dots,b)$. For any integers $c,d\geq 0$,
let $\underline{c}^d$ denote the sequence consisting of $d$ copies of $c$.
Given finite sequences $v$ and $w$ of possibly different lengths,
we write $v+w=(v_1+w_1,v_2+w_2,\ldots)$, where we pad the shorter
sequence with zeroes at the end if needed.
Finally, given integers $k\geq 0$ and $\ell>0$, 
let $r_{k,\ell}=k-\ell\lfloor k/\ell\rfloor$ be the
remainder when $k$ is divided by $\ell$, and define
\[ \I_{k,\ell}=
(\underline{\ell}^{1+\lfloor k/\ell \rfloor}, (\ell-1)\arrowdownto 1)
+(\underline{0}^{1+\lfloor k/\ell \rfloor}, 
  \underline{1}^{r_{k,\ell}},
  \underline{0}^{\ell-1-r_{k,\ell}}). \]
For $k=\ell=0$, let $\mathbb{I}_{k,\ell}=(0)$ be the zero partition.
\end{definition}

\begin{example}\label{ex:Ikl}
When $k=9$ and $\ell=4$, we find that
 $\mathbb{I}_{9,4} =(4,4,4,3,2,1)+(0,0,0,1,0,0)=(4,4,4,4,2,1)$.
For $\lambda\in\Par(n)$, one may check that
\[ \dpmap_n(\dyckmap(\lambda))
  =\sum_{i=1}^{\ell(\lambda)} 
\I_{((\lambda_i-1)\sum_{j=i+1}^{\ell(\lambda)} \lambda_j),\lambda_i-1}. \]
For example, given $\lambda=(4,2,1,1,1)\in\Par(9)$,
\[ \dpmap_9(\dyckmap(\lambda)) = \mathbb{I}_{15,3} + \mathbb{I}_{3,1} 
= (3,3,3,3,3,3,2,1)+(1,1,1,1) = (4,4,4,4,3,3,2,1). \]
For a partition $\lambda=(a,\underline{1}^b)\in\Par(n)$ of hook shape, we find 
that \[ \dpmap_n(\dyckmap(\lambda))=\I_{(a-1)b,a-1}. \] 
\end{example}

We are going to build collections of Dyck partitions by applying $\nu$
repeatedly to the special partitions $\I_{k,\ell}$. The next lemma describes
some properties of the Dyck partitions $\nu^m(\I_{k,\ell})$,
where $\nu^m$ means apply the $\nu$ operator $m$ times.

\begin{lemma}\label{Delta_II}
Fix integers $k,\ell\geq 0$.
\begin{enumerate}
\item For $0\leq m\leq\ell$, $\nu^m(\I_{k,\ell})$ is defined.
\item For $0\leq m\leq\ell$, $\defc(\nu^m(\I_{k,\ell}))=k$,
so $\nu^m(\I_{k,\ell})\in\DP_{\ast,k}$.
\item For $0\leq m\leq\ell$, $\dinv(\nu^m(\I_{k,\ell}))=\binom{\ell+1}{2}+m$.
\item For $0\leq m\leq r_{-k,\ell}$, 
$\Delta(\nu^m(\I_{k,\ell}))=\ell+\lceil k/\ell\rceil+1$.
\item For $1+r_{-k,\ell}\leq m\leq\ell$, 
$\Delta(\nu^m(\I_{k,\ell}))=\ell+\lceil k/\ell\rceil+2$.  
\end{enumerate}
\end{lemma}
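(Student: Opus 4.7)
My plan is to verify all five claims by explicitly tracking the Dyck vector $v^{(m)} := \dvmap(\nu^m(\I_{k,\ell}))$ for each $m$. The main tool is the Dyck-vector form of $\nu$ from the proof of Lemma~\ref{lem:propertiesnu}: delete the entry $v_a$, where $a$ is the largest index with $v_a = a-1$, and append $a-2$, inflating $n$ through $\iota_n$ when $a = 1$. Setting $q = \lfloor k/\ell\rfloor$ and $r = r_{k,\ell}$, a direct application of the formula for $\dvmap_n$ from Section~\ref{subsec:def-dyck-ptns} gives
\[
v^{(0)} = \begin{cases}
(\underline{0}^{\ell+1}, 1, 2, \ldots, q) \in \DV_{\ell+q+1}, & r = 0, \\
(0, \underline{1}^{\ell-r}, \underline{0}^{r}, 1, 2, \ldots, q+1) \in \DV_{\ell+q+2}, & r \geq 1.
\end{cases}
\]
The identity $|\I_{k,\ell}| = \binom{\ell+1}{2} + k$ is immediate from the definition of $\I_{k,\ell}$; combined with a count of pairs $(i,j)$ with $v^{(0)}_i - v^{(0)}_j \in \{0,1\}$ (collapsing via the identity $r(r+1) + (\ell-r)(\ell-r+1) + 2r(\ell-r) = \ell(\ell+1)$), this gives $\dinv(\I_{k,\ell}) = \binom{\ell+1}{2}$ and $\defc(\I_{k,\ell}) = k$, which is the base case of (2) and (3).

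The bulk of the proof is an inductive description of $v^{(m)}$ which falls into two phases. In Phase~I, covering $0 \leq m \leq r_{-k,\ell}$, each application of $\nu$ has $a = 2$ (when $r \geq 1$), trimming a $1$ from position $2$ and appending a $0$, so that
\[
v^{(m)} = (0, \underline{1}^{\ell-r-m}, \underline{0}^{r}, 1, 2, \ldots, q+1, \underline{0}^m) \in \DV_{\ell+q+2} \qquad (0 \leq m \leq \ell - r).
\]
When $r = 0$, Phase~I degenerates to the single vector $v^{(0)} \in \DV_{\ell+q+1}$, since $r_{-k,\ell} = 0$. In either case $v^{(m)}$ has a zero in some position $\geq 2$; by the characterization of $\Delta$ recalled at the end of Section~\ref{subsec:def-dyck-ptns}, this forces $\Delta = \ell + \lceil k/\ell\rceil + 1$. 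At $m = r_{-k,\ell}$ the second entry of $v^{(m)}$ has collapsed to $0$, so $a$ drops to $1$ and the next $\nu$-application must pass through $\iota$, landing the orbit in $\DV_{\ell + \lceil k/\ell\rceil + 2}$.

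Phase~II then covers $r_{-k,\ell}+1 \leq m \leq \ell$: again $a = 2$ at each step, and induction yields $v^{(m)} = (0, \underline{1}^{\ell+1-m}, 2, 3, \ldots, q+2, \underline{1}^{\ell-r}, \underline{0}^{m - r_{-k,\ell}}) \in \DV_{\ell+q+3}$ when $r \geq 1$, together with the analogous formula $(0, \underline{1}^{\ell+1-m}, 2, 3, \ldots, q+1, \underline{0}^m) \in \DV_{\ell+q+2}$ when $r = 0$. Both have a trailing block of zeros, so $\Delta = \ell + \lceil k/\ell\rceil + 2$, giving (4) and (5). Part (1) is then automatic: the applicability condition $v_n \geq a - 3$ from the proof of Lemma~\ref{lem:propertiesnu} is trivial because $a \in \{1,2\}$ and $v_n \geq 0$ throughout. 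Parts (2) and (3) for all $m$ follow from the base case by iterating Lemma~\ref{lem:propertiesnu}, which preserves $\defc$ and raises $\dinv$ by $1$. The main bookkeeping obstacle is carrying out the transition at $m = r_{-k,\ell}$ correctly, where both the length and the support of the vector change; once the two-phase picture is in place, each of the five claims reduces to a routine check of the recursion.
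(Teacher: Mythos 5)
Your proposal is correct and takes essentially the same route as the paper's proof: both write down the Dyck vectors of $\nu^m(\I_{k,\ell})$ explicitly (your two-phase, minimal-order formulas with the $\iota$-transition at $m=r_{-k,\ell}$ agree with the paper's single computation carried out in the fixed ambient order $n=\ell+\lceil k/\ell\rceil+2$), read off parts (1), (4), (5) from these vectors, and deduce (2), (3) from the direct evaluation of $|\I_{k,\ell}|$ and $\dinv(\I_{k,\ell})=\binom{\ell+1}{2}$ together with iteration of Lemma~\ref{lem:propertiesnu}.
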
 
\begin{proof}
Let $n=\ell+\lceil k/\ell\rceil+2$ and
$r=r_{-k,\ell}=-k\bmod\ell=-k+\ell\lceil k/\ell\rceil$. 
One readily checks that
\[ \DV_n(\I_{k,\ell})= (0,1,\underline{2}^{r},
\underline{1}^{\ell-r},2,3,4,\ldots,\lceil k/\ell\rceil+1). \] 
Using~\eqref{eq:nu-on-dv} and the comments following it, we now compute:
\begin{eqnarray*}
\DV_n(\nu(\I_{k,\ell}))&=&
 (0,1,\underline{2}^{r-1},\underline{1}^{\ell-r},2,3,4,
\ldots,\lceil k/\ell\rceil+1,1);\\
\DV_n(\nu^2(\I_{k,\ell}))&=&
 (0,1,\underline{2}^{r-2},\underline{1}^{\ell-r},2,3,4,
\ldots,\lceil k/\ell\rceil+1,\underline{1}^2);\\ 
\ldots&&\ldots\\ 
\DV_n(\nu^r(\I_{k,\ell})) &=&
 (0,1,\underline{1}^{\ell-r},2,3,4,\ldots,
 \lceil k/\ell\rceil+1,\underline{1}^{r});\\ 
\DV_n(\nu^{r+1}(\I_{k,\ell})) &=&
(0,\underline{1}^{\ell-r},2,3,4,\ldots,
\lceil k/\ell\rceil+1,\underline{1}^{r}, 0);\\ 
\ldots&&\ldots\\ 
\DV_n(\nu^{\ell}(\I_{k,\ell})) &=&
 (0,1,2,3,4,\ldots,\lceil k/\ell\rceil+1,\underline{1}^{r}, 
\underline{0}^{\ell-r}).
\end{eqnarray*}
Assertions (1), (4), and (5) are immediate from this computation.
Next, we compute
\begin{eqnarray*}
|\I_{k,\ell}|&=& \binom{\ell+1}{2}+\ell\lfloor k/\ell\rfloor+r_{k,\ell}
=k+\binom{\ell+1}{2}; \\
\dinv(\I_{k,\ell})&=&\dinv(\dvmap_n(\I_{k,\ell}))
  =\binom{\ell-r+1}{2}+\binom{r+1}{2}+r(\ell-r)=\binom{\ell+1}{2}; \\
\defc(\I_{k,\ell})&=&|\I_{k,\ell}|-\dinv(\I_{k,\ell})=k.  
\end{eqnarray*} 
Now (2) and (3) follow from Lemma~\ref{lem:propertiesnu}.  
\end{proof}

\subsection{The Collections $\C_{(k)}$.}
\label{subsec:define-Ck}

\begin{definition}
For each integer $k\geq 0$, define 
\begin{equation}\label{def:hook} 
\C_{(k)} = \bigcup_{\ell=\min(1,k)}^{\infty}
\{\nu^m(\I_{k,\ell}):0\leq m\leq\ell\}\subseteq \DP_{\ast,k}. 
\end{equation}
(The notation $\C_{(k)}$ is chosen so that it is compatible 
with Conjecture~\ref{strong_conj} below.)
\end{definition}

We remark that for all $i\geq\min(1,k)$, there exists a unique 
$\gamma\in\C_{(k)}$ with $\dinv(\gamma)=i$. This follows from
Lemma~\ref{Delta_II}(3) and the readily verified fact
that every integer $i\geq 0$ can be written uniquely in the form
$i=\binom{\ell+1}{2}+m$ for some $\ell,m\geq 0$ with $0\leq m\leq\ell$.

\begin{example}
Table~\ref{tab:calc-C4} shows the first several Dyck partitions in $\C_{(4)}$.  
\begin{table}[ht]
\begin{center}
\begin{tabular}{l|l|l|l}
Dyck partition $\gamma$ & $\dinv(\gamma)$ & $\defc(\gamma)$ & $\Delta(\gamma)$
\\\hline\hline
$\I_{4,1}=(1,1,1,1,1)$ & $1$ & $4$ & $6$ \\
$\nu(\I_{4,1})=(6)$    & $2$ & $4$ & $7$ \\\hline
$\I_{4,2}=(2,2,2,1)$   & $3$ & $4$ & $5$ \\
$\nu(\I_{4,2})=(5,1,1,1)$&$4$& $4$ & $6$ \\
$\nu^2(\I_{4,2})=(5,4)$& $5$ & $4$ & $6$ \\\hline
$\I_{4,3}=(3,3,3,1)$   & $6$ & $4$ & $6$ \\
$\nu(\I_{4,3})=(5,2,2,2)$&$7$& $4$ & $6$ \\
$\nu^2(\I_{4,3})=(5,4,1,1,1)$ & $8$ & $4$ & $6$ \\
$\nu^3(\I_{4,3})=(6,4,3)$& $9$ & $4$ & $7$ \\\hline
$\I_{4,4}=(4,4,3,2,1)$ & $10$ & $4$ & $6$ \\
$\nu(\I_{4,4})=(6,3,3,2,1)$&$11$ & $4$ & $7$  \\
$\ldots$ & $\ldots$ & $\ldots$ & $\ldots$
\end{tabular}
\end{center} 
\caption{Dyck partitions in $\C_{(4)}$.}
\label{tab:calc-C4}
\end{table}
\end{example}

From Example~\ref{ex:Ikl}, we know
that the partition $\lambda=(a+1,\underline{1}^b)\in\Par(n)$ of hook shape
satisfies $\dpmap_{n}(\dyckmap(\lambda))=\I_{ab,a}\in\C_{(ab)}$.
The conjugate partition $\lambda'=(b+1,\underline{1}^a)$ satisfies
$\dpmap_{n}(\dyckmap(\lambda'))=\I_{ba,b}\in\C_{(ab)}$.
Thus, we could satisfy Conjecture~\ref{conj:DP}(c) for this $\lambda$
by setting $\mu=(ab)$ and $\DP_{\mu}=\overline{\DP}_{\mu}=\C_{(ab)}$.
The main result of this section is that this choice of $\DP_{\mu}$
and $\overline{\DP}_{\mu}$ satisfies part (b) of the conjecture:

\begin{theorem}\label{hook_chain}
For all $k,n\geq 0$, $\C_{(k)}\cap\DP_n$ is $n$-opposite to itself.
\end{theorem}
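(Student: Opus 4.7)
Plan: By Lemma~\ref{Delta_II}(3) and the uniqueness remark following the definition of $\C_{(k)}$, for each integer $d\geq\binom{\min(1,k)+1}{2}$ there is exactly one element $\gamma_d\in\C_{(k)}$ with $\dinv(\gamma_d)=d$, and every element of $\C_{(k)}$ arises in this way. Since $\defc(\gamma_d)=k$, any $\gamma_d\in\DP_n$ satisfies $\area_n(\gamma_d)=\binom{n}{2}-k-d$. This forces the candidate involution to be $g(\gamma_d):=\gamma_{\binom{n}{2}-k-d}$: provided $g$ is well-defined as a map $\C_{(k)}\cap\DP_n\to\C_{(k)}\cap\DP_n$, it automatically satisfies $\area_n(\gamma_d)=\dinv(g(\gamma_d))$ and $\dinv(\gamma_d)=\area_n(g(\gamma_d))$, and the uniqueness clause in the definition of ``$n$-opposite to itself'' holds because dinv is injective on $\C_{(k)}$.

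The proof therefore reduces to showing: if $\gamma_d\in\DP_n$, then $d':=\binom{n}{2}-k-d$ is a legal index and $\gamma_{d'}\in\DP_n$. The lower bound $d'\geq\binom{\min(1,k)+1}{2}$ is a short arithmetic check using $\area_n(\gamma_d)\geq 0$; when $k\geq 1$, the value $d'=0$ must be excluded separately (for instance, via $\defc(\Delta_n)=0$, which rules out $\gamma_d=\Delta_n$, the unique Dyck partition in $\DP_n$ with area $0$). The main content is the biconditional $\Delta(\gamma_d)\leq n \Longleftrightarrow \Delta(\gamma_{d'})\leq n$. Writing $d=\binom{\ell+1}{2}+m$ and $d'=\binom{\ell'+1}{2}+m'$ with $0\leq m\leq\ell$ and $0\leq m'\leq\ell'$, Lemma~\ref{Delta_II}(4)(5) gives $\Delta(\gamma_d)=\ell+\lceil k/\ell\rceil+1$ when $m\leq r_{-k,\ell}$ and $\Delta(\gamma_d)=\ell+\lceil k/\ell\rceil+2$ otherwise, and analogously for $\gamma_{d'}$. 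I would verify the biconditional by case analysis on whether $m\leq r_{-k,\ell}$ and whether $m'\leq r_{-k,\ell'}$, exploiting the arithmetic constraint $\binom{\ell+1}{2}+m+\binom{\ell'+1}{2}+m'=\binom{n}{2}-k$.

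An equivalent geometric reformulation, useful for organizing the casework, is as follows: the set $D_n:=\{d:\gamma_d\in\DP_n\}$ decomposes along the partition of $\Z_{\geq 0}$ into the intervals $I_\ell=[\binom{\ell+1}{2},\binom{\ell+2}{2}-1]$, and within each $I_\ell$ the intersection $I_\ell\cap D_n$ is either all of $I_\ell$, the prefix $[\binom{\ell+1}{2},\binom{\ell+1}{2}+r_{-k,\ell}]$, or empty, depending on $\ell+\lceil k/\ell\rceil$ relative to $n$. The theorem is equivalent to the statement that this union of intervals is invariant under the reflection $d\mapsto\binom{n}{2}-k-d$.

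The main obstacle is that $\ell\mapsto\ell+\lceil k/\ell\rceil$ is non-monotonic (U-shaped, minimized near $\sqrt{k}$), so the reflection need not preserve the chain index $\ell$; a single interval from chain $\ell$ can map under the reflection to a union of pieces of intervals from several other chains, as the computed examples with $k=4$, $n=6,7$ illustrate. Handling this mixing requires careful arithmetic bookkeeping of the three parameters $\ell$, $\lceil k/\ell\rceil$, and $r_{-k,\ell}$ in relation to $n$, but once the correct case split is identified I expect each of the four cases (determined by whether $m\leq r_{-k,\ell}$ and whether $m'\leq r_{-k,\ell'}$) to reduce to a straightforward inequality argument.
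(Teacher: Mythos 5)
Your setup coincides with the paper's: the unique-dinv parametrization of $\C_{(k)}$, the forced candidate involution $d\mapsto d'=\binom{n}{2}-k-d$, and the reduction to the biconditional $\Delta(\gamma_d)\le n\iff\Delta(\gamma_{d'})\le n$ are exactly how the paper translates the theorem into the symmetry of the set $S_{n,k}=\{(i,\binom{n}{2}-k-i): a_i\le n\}$, with $a_i=\Delta(\nu^m(\I_{k,\ell}))$ for $i=\binom{\ell+1}{2}+m$ as in Lemma~\ref{Delta_II}(4),(5). But the proposal stops exactly where the real content begins. You correctly observe that $\ell\mapsto\ell+\lceil k/\ell\rceil$ is U-shaped and that the reflection mixes chains, and then you defer the rest to ``careful arithmetic bookkeeping'' with an expectation that four cases (according to $m\lessgtr r_{-k,\ell}$ and $m'\lessgtr r_{-k,\ell'}$) each reduce to a straightforward inequality. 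That case split, by itself, does not work as a mechanism: given $d$ in chain $\ell$, the constraint $d+d'=\binom{n}{2}-k$ does not tell you which chain $\ell'$ contains $d'$, nor how $m'$ compares with $r_{-k,\ell'}$, and those are precisely what you must control to evaluate $\Delta(\gamma_{d'})$.

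What is missing are the specific arithmetic facts that make the reflection-invariance true, and none of them is stated or argued in your plan. Writing $\tau(\ell)=\lceil k/\ell\rceil$ and $s(\ell)=\ell+\tau(\ell)+1$, the paper proves: $s$ is weakly decreasing up to $L=\max\{\ell:\ell^2-\ell<k\}$ and weakly increasing after, with $\tau^2(\ell)\le\ell$ and $\tau^2(\ell)=\ell$ for $\ell\le L$; consequently, if $\ell_1$ and $\ell_2$ are the extreme chains meeting $\DP_n$, then $\ell_1=\tau(\ell_2)$, which yields $x_1+x_2=\binom{n}{2}-k$ (the reflection sends the minimal admissible dinv to the maximal one); for boundary chains with $s(\ell)=n$ one has the residue identities $r_{-k,\ell}=r_{-k,n-1-\ell}$ and $r^+_{k,\ell}=r^+_{k,n-2-\ell}$, which match the block lengths in the o/$*$ pattern~\eqref{general case}; and $\ell_1'-\ell_1=\ell_2-\ell_2'$, which matches the number of boundary blocks on the two sides. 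Together these show the pattern of admissible and inadmissible dinv values between $x_1$ and $x_2$ is a palindrome, which is the theorem. Your pointwise biconditional would, if pushed through, have to rediscover these identities (in particular the pairing $\ell\leftrightarrow n-1-\ell$ of boundary chains and the identity $\ell_1=\tau(\ell_2)$); as it stands, the proposal identifies the obstacle but supplies no argument that overcomes it, so the main step of the proof is a genuine gap. (The peripheral points you do address, e.g.\ excluding $d'=0$ for $k\ge1$ via $\defc(\Delta_n)=0$, are fine, and the degenerate cases where all of~\eqref{general case} is a single block are indeed easy, as in the paper's treatment of~\eqref{degenerate case}.)
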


We prove this theorem in the following subsections. As an example
of the theorem, when $k=4$ and $n=6$, we see from Table~\ref{tab:calc-C4} that
\[ \sum_{\gamma\in\C_{(4)}\cap\DP_6} q^{\area_6(\gamma)}t^{\dinv(\gamma)}
  = q^{10}t^1+q^8t^3+q^7t^4+q^6t^5+q^5t^6+q^4t^7+q^3t^8+q^1t^{10}, \]
which is jointly symmetric in $q$ and $t$.
Replacing $n=6$ by $n=5$, the sum becomes $q^3t^3$.

\subsection{Proof of the $k=0$ Case.}
\label{subsec:prove-hook-chain0}

\begin{lemma}\label{lem:C0-opp}
For all $n\geq 0$, $\C_{(0)}\cap\DP_n$ is $n$-opposite to itself.
\end{lemma}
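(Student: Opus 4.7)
The plan is to exploit the fact that $\defc=0$ forces $\area_n(\gamma)+\dinv(\gamma)=\binom{n}{2}$ for every $\gamma\in\C_{(0)}\cap\DP_n$, so matching $n$-opposites reduces to matching dinv values that sum to $\binom{n}{2}$. I will show that the dinv statistic is injective on $\C_{(0)}\cap\DP_n$ and that its image is exactly the interval $\{0,1,2,\ldots,\binom{n}{2}\}$; once this is known, the $n$-opposition bijection is forced.

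First I would unpack $\C_{(0)}$ using the definition together with Lemma~\ref{Delta_II}. Since $\min(1,0)=0$ and $r_{-0,\ell}=0$, the collection $\C_{(0)}$ consists of the empty partition $\I_{0,0}=(0)$ together with the staircases $\I_{0,\ell}=(\ell,\ell-1,\ldots,1)$ for $\ell\ge 1$ and their iterates $\nu^m(\I_{0,\ell})$ for $1\le m\le\ell$. Lemma~\ref{Delta_II}(3) gives $\dinv(\nu^m(\I_{0,\ell}))=\binom{\ell+1}{2}+m$, while parts (4)–(5) give $\Delta(\I_{0,\ell})=\ell+1$ and $\Delta(\nu^m(\I_{0,\ell}))=\ell+2$ when $m\ge 1$.

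Next I would intersect with $\DP_n$. The constraint $\Delta(\gamma)\le n$ retains $\I_{0,\ell}$ for $0\le\ell\le n-1$ and $\nu^m(\I_{0,\ell})$ with $1\le m\le\ell$ for $1\le\ell\le n-2$. For each fixed $\ell\in\{1,\dots,n-2\}$ the dinv values $\binom{\ell+1}{2}+m$, $0\le m\le\ell$, fill the interval $[\binom{\ell+1}{2},\binom{\ell+2}{2}-1]$; taking the union over such $\ell$ and adjoining the isolated values $0$ (from $\I_{0,0}$) and $\binom{n}{2}$ (from $\I_{0,n-1}$), the dinv statistic hits every integer in $\{0,1,\ldots,\binom{n}{2}\}$ exactly once. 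This is the main bookkeeping step, and I expect it to be the only point requiring care, since it relies on the elementary fact (noted just after the definition of $\C_{(k)}$) that every nonnegative integer has a unique representation of the form $\binom{\ell+1}{2}+m$ with $0\le m\le\ell$.

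Finally, I would define $g\colon \C_{(0)}\cap\DP_n\to\C_{(0)}\cap\DP_n$ by sending each $\gamma$ to the unique element whose dinv equals $\binom{n}{2}-\dinv(\gamma)$; by the enumeration above this map is well-defined, is its own inverse, and satisfies $\area_n(g(\gamma))=\binom{n}{2}-\dinv(g(\gamma))=\dinv(\gamma)$ and likewise $\dinv(g(\gamma))=\area_n(\gamma)$, so $\gamma$ and $g(\gamma)$ form a pair of $n$-opposites. Uniqueness of the bijection follows because the injectivity of $\dinv$ on $\C_{(0)}\cap\DP_n$, combined with $\area_n+\dinv=\binom{n}{2}$, guarantees that distinct elements have distinct $(\area_n,\dinv)$ pairs. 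This completes the proof.
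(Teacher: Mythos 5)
Your argument is correct and follows essentially the same route as the paper's proof: use Lemma~\ref{Delta_II} to identify exactly which $\nu^m(\I_{0,\ell})$ lie in $\DP_n$ and to compute their dinv values, then observe that the resulting $(\dinv,\area_n)$ pairs are precisely $(i,\binom{n}{2}-i)$ for $0\leq i\leq\binom{n}{2}$, which forces the (unique) self-opposing bijection. Your extra bookkeeping about the unique representation $i=\binom{\ell+1}{2}+m$ and the explicit involution just spells out what the paper leaves implicit.
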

\begin{proof}
Taking $k=0$ in Lemma~\ref{lem:propertiesnu}(4) and (5), we see that
$\Delta(\nu^m(\I_{0,\ell}))$ is $\ell+1$ for $m=0$, and $\ell+2$ for $m>0$.
So, $\nu^m(\I_{0,\ell})\in\DP_n$ iff $\Delta(\nu^m(\I_{0,\ell}))\leq n$
iff either $m=0$ and $\ell\leq n-1$, or $m>0$ and $\ell\leq n-2$. Therefore,
\[ \C_{(0)}\cap\DP_n=\left(\bigcup_{\ell=0}^{n-2}
 \{\nu^m(\I_{0,\ell}):0\leq m\leq\ell\}\right)\cup\{\I_{0,n-1}\}. \]
Now by Lemma~\ref{lem:propertiesnu}(2) and (3) and the definition of $\area_n$,
\[ \{(\dinv(\gamma),\area_n(\gamma)):\gamma\in\C_{(0)}\cap\DP_n\} =
\left\{\left(0,\binom{n}{2}\right),\left(1,\binom{n}{2}-1\right),\ldots,
\left(\binom{n}{2},0\right)\right\}.\]
This list of exponent pairs is jointly symmetric, so
$\C_{(0)}\cap\DP_n$ is $n$-opposite to itself.  
\end{proof}

\subsection{Exponent Pairs Appearing in $\C_{(k)}\cap\DP_n$.}
\label{subsec:exp-pairs-Ck}

In the rest of this section, let $k>0$ be a fixed integer.  
Define two functions $\tau, s:\mathbb{Z}_{>0}\to\mathbb{Z}_{>0}$ 
(depending on $k$) by
\begin{equation}\label{taus}
\tau(\ell)=\lceil k/\ell\rceil,\quad s(\ell)=\ell+\tau(\ell)+1.
\end{equation}
We translate Theorem~\ref{hook_chain} into a question about the sequence 
$A_{(k)}=(a_1,a_2,\ldots)$, defined as follows:
for $i$ of the form $\binom{\ell+1}{2}+m$ where $\ell\geq 1$
and $0\leq m\leq\ell$, let $a_i=\Delta(\nu^m(\I_{k,\ell}))$. 
By Lemma~\ref{Delta_II}(4) and (5), 
we have $$a_i=\begin{cases}
s(\ell),& \text{ for $0\leq m\leq r_{-k,\ell}$;} \\
s(\ell)+1,& \text{ for $1+r_{-k,\ell}\leq m\leq\ell$.} 
\end{cases}$$

\begin{example} 
For $1\leq k\leq 5$, the sequences $A_{(k)}$ begin as follows:
\begin{eqnarray*} 
A_{(1)}&=&(3,4;4,4,5;5,5,5,6;6,6,6,6,7;\ldots); \\
A_{(2)}&=&(4,5;4,5,5;5,5,6,6;6,6,6,7,7;\ldots); \\
A_{(3)}&=&(5,6;5,5,6;5,6,6,6;6,6,7,7,7;\ldots); \\
A_{(4)}&=&(6,7;5,6,6;6,6,6,7; 6,7,7,7,7;\ldots)
\text{\quad[cf. Table~\ref{tab:calc-C4}];} \\
A_{(5)}&=&(7,8;6,6,7;6,6,7,7;7,7,7,7,8;7,8,8,8,8,8;\ldots).
\end{eqnarray*}
When $k=50$, we have
$A_{(50)}=($52, 53; 28, 29, 29; 21, 21, 22, 22; 18, 18, 18, 19, 19; 16, 17, 17, 17, 17, 17; 16, 16, 16, 16, 16, 17, 17; 16, 16, 16, 16, 16, 16, 16, 17; 16, 16, 16, 16, 16, 16, 16, 17, 17; $\ldots)$.
\end{example}

For any $n\in\mathbb{Z}_{>0}$, define a set
$$S_{n,k}=\Big\{(\dinv(\gamma),\area_n(\gamma)): 
\gamma\in \C_{(k)}\cap\DP_n\Big\}.$$ 
Then, by Lemma~\ref{Delta_II}(2) and (3) and the definition of $\area_n$,
$$S_{n,k}=\Big\{\left(i,\binom{n}{2}-k-i\right): i\ge 1, a_i\le n\Big\}.$$ 
Thus Theorem~\ref{hook_chain} is equivalent to the following statement,
proved below: for all $n,k,x,y>0$, $(x,y)\in S_{n,k}$ if and only if 
$(y,x)\in S_{n,k}$.

\subsection{Preliminary Lemmas and Definitions.}
\label{subsec:hook-lem-def}

Define $L=\max\{\ell:\ell\leq\tau(\ell)\}$.
The following lemma states some fundamental facts
about the functions $\tau$ and $s$ defined in~\eqref{taus}.

\begin{lemma}
\begin{enumerate}
\item[(1)] $L=\max\{\ell:\ell^2-\ell<k\}$.
\item[(2)] $\tau$ is a weakly decreasing function, and so
 $s(\ell+1)\leq s(\ell)+1$ for all $\ell$.
\item[(3)] The restriction of $\tau$ to $\{1,2,\ldots,L\}$ is a strictly 
decreasing function, and so $s$ is weakly decreasing on $\{1,2,\ldots,L\}$.
\item[(4)] For all $\ell>L$, $\tau(\ell+1)\geq\tau(\ell)-1$,
and so $s$ is weakly increasing on $\{L+1,L+2,\ldots\}$.
\item[(5)] For all $\ell$, we have $\tau^2(\ell)\leq\ell$,
and so $s(\tau(\ell))\leq s(\ell)$.
\item[(6)] For all $\ell\leq L$, we have $\tau^2(\ell)=\ell$,
and so $s(\tau(\ell))=s(\ell)$.  
\end{enumerate}
\end{lemma}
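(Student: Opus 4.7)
The plan is to reduce each of the six assertions to elementary manipulations of ceiling inequalities. Throughout, I will use the equivalence
$$\tau(\ell)=q\iff\ell(q-1)<k\leq\ell q,$$
together with the fact that $x\mapsto\lceil k/x\rceil$ is weakly decreasing on $\mathbb{Z}_{>0}$. For (1), the condition $\ell\leq\lceil k/\ell\rceil$ is equivalent to $k>\ell(\ell-1)$, which directly gives the stated characterization of $L$. For (2), monotonicity of $\tau$ is immediate, and $s(\ell+1)-s(\ell)=1+\tau(\ell+1)-\tau(\ell)\leq 1$ follows.

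For (3), I would fix $\ell$ with $\ell+1\leq L$, set $q=\tau(\ell+1)$, and use the defining condition of $L$ to obtain $\ell+1\leq q$. Expanding $(q-1)(\ell+1)<k$ gives $k>q\ell+(q-\ell-1)\geq q\ell$, so $\tau(\ell)\geq q+1$, i.e.\ $\tau$ strictly decreases; the bound $s(\ell+1)-s(\ell)\leq 0$ is then automatic. Part (4) is the mirror image: for $\ell>L$, set $q=\tau(\ell)$, note $\ell\geq q+1$, and use the identity $(q-1)\ell-(q-2)(\ell+1)=\ell-q+2>0$ to pass from $k>(q-1)\ell$ to $k>(q-2)(\ell+1)$, which gives $\tau(\ell+1)\geq q-1$ and the corresponding $s$ bound. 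For (5), $q=\tau(\ell)$ yields $\ell q\geq k$, hence $k/q\leq\ell$ and therefore $\tau(q)=\lceil k/q\rceil\leq\ell$. For (6), the hypothesis $\ell\leq L$ gives $\ell\leq q$, so $k>\ell(q-1)=\ell q-\ell\geq \ell q-q=q(\ell-1)$, which forces $\tau(q)\geq\ell$; combined with (5) this is equality, and the statement for $s$ is then immediate.

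If there is a main obstacle, it is bookkeeping: each argument turns on a $\pm 1$ shift and on whether a ceiling inequality is strict or weak. I will manage this by always replacing $\lceil k/x\rceil=q$ with its two-sided form $x(q-1)<k\leq xq$ before manipulating, and by handling the degenerate cases ($q=1$, or $\ell$ at the boundary $L$) separately when needed. Once this dictionary is in place, every part reduces to a one-line inequality, and the $s$-statements are trivial consequences of the corresponding $\tau$-statements via $s(\ell+1)-s(\ell)=1+\tau(\ell+1)-\tau(\ell)$ and $s(\tau(\ell))-s(\ell)=\tau^2(\ell)-\ell$.
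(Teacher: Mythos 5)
Your proposal is correct and takes essentially the same route as the paper: both arguments reduce each of the six parts to elementary ceiling inequalities for $\tau(\ell)=\lceil k/\ell\rceil$, using the characterization of $L$ from part (1) together with the identities $s(\ell+1)-s(\ell)=1+\tau(\ell+1)-\tau(\ell)$ and $s(\tau(\ell))-s(\ell)=\tau^2(\ell)-\ell$. The only cosmetic difference is that you work from the two-sided bracket $\ell(q-1)<k\le\ell q$ for $\tau(\ell)=q$, whereas the paper rearranges the fractional inequalities comparing $k/\ell$ with $k/(\ell\pm 1)$ and then takes ceilings; the content is identical.
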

\begin{proof}
First note that for all $j\in\Z$ and $x\in\R$, $j\leq\ceil{x}$
iff $j<x+1$. So, $\ell\leq\tau(\ell)=\ceil{k/\ell}$ iff $\ell<(k/\ell)+1$
iff $\ell^2-\ell<k$, proving (1). If $\ell<\ell'$, then $k/\ell>k/\ell'$,
hence $\ceil{k/\ell}\geq\ceil{k/\ell'}$, proving (2). 
To prove (3), assume $1<\ell\leq L$, and show $\tau(\ell)<\tau(\ell-1)$. 
By (1), $\ell^2-\ell<k$, which rearranges to $k/\ell<k/(\ell-1)-1$. This 
implies $\ceil{k/\ell}<\ceil{k/(\ell-1)}$, as needed. To prove (4), 
assume $\ell\geq L+1$, so $\ell^2-\ell\geq k$ by (1).  Then $\ell^2+\ell>k$,
which rearranges to $(k/\ell)-1<k/(\ell+1)$. Taking the ceiling of both
sides gives $\tau(\ell)-1\leq\tau(\ell+1)$, as needed. To prove (5),
note that $k/\ell\leq\ceil{k/\ell}$ implies $\ell\geq k/\ceil{k/\ell}$.
Since $\ell$ is an integer, $\ell\geq\ceil{k/\ceil{k/\ell}}=\tau^2(\ell)$, 
as needed. To prove (6), assume $\ell\leq L$; by (5), it suffices to 
prove $\ell\leq\tau^2(\ell)$. Now $\ell^2-\ell\leq k$,
which rearranges to $(k/\ell)+1\leq k/(\ell-1)$. Since
$\ceil{k/\ell}<(k/\ell)+1$, we get $\ceil{k/\ell}<k/(\ell-1)$, which
rearranges to $\ell<k/\ceil{k/\ell}+1$. By the first sentence of this
proof, $\ell\leq\tau^2(\ell)$ follows.  
\end{proof}

The following definitions will help us analyze $S_{n,k}$.
We may assume that $S_{n,k}\neq\emptyset$, which is equivalent
to the condition $\min\{s(\ell):\ell\geq 1\}\leq n$.

\begin{definition}
Let $r_{j,\ell}^+$ be the unique integer in the range $\{1,2,\ldots,\ell\}$
such that $j\equiv r_{j,\ell}^+\pmod{\ell}$. Define 
\begin{flalign*}
&& x_1=\min\{i: a_i\le n\},\quad \quad & x_2=\max\{i: a_i\le n\}
=\max\{i: a_i= n\},&&\\
&& \ell_1=\min\{\ell : s(\ell)\le n\},\quad &\ell_2=\max\{\ell : s(\ell)\le n\}=\max\{\ell : s(\ell)= n\}, && \\
&& \ell_1'=\min\{\ell:s(\ell)<n\}, 
\quad &\ell_2'=\min\{\ell>\ell_1': s(\ell)=n\}\quad\text{(if these exist).} &&
\end{flalign*}
\end{definition}

We need the following remarks about this definition.
The second formulas for $\ell_2$ and $x_2$ follow from
part (2) of the previous lemma and the definition of $a_i$.
The numbers $\ell_1'$ and $\ell_2'$ are undefined iff 
$\min\{s(\ell):\ell\geq 1\}=n$. Using properties of $s$ from
the previous lemma, one may check that 
$x_1=\binom{\ell_1+1}{2}$, $x_2=\binom{\ell_2+1}{2}+r_{-k,\ell_2}$,
$\ell_1\leq L\leq\ell_2$, and (when $\ell_1'$ and $\ell_2'$ are defined)
$\ell_1\leq\ell_1'\leq L<\ell_2'\leq\ell_2$.

\begin{lemma}\label{easylemma} 
\begin{enumerate}
\item[(1)] If $\ell<\tau(\ell)$ and $s(\ell)=n$, then
$r_{-k,\ell}=r_{-k,n-1-\ell}$ and ${r^+_{k,\ell}=r^+_{k,n-2-\ell}}$.
\item[(2)] $\ell_1=\tau(\ell_2)$ and  $x_1+x_2=\binom{n}{2}-k$.
\item[(3)] If $n>\min\{s(\ell):\ell\geq 1\}$, 
then $\ell'_1-\ell_1=\ell_2-\ell_2'$. 
\end{enumerate}
\end{lemma}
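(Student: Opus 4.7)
The plan is to handle the three parts of the lemma in order, with part (3) ultimately reducing to part (2) applied at $n-1$ in place of $n$. Throughout, the main tools are the properties of $\tau$ from the preceding lemma, especially the involutive identity $\tau^2(\ell)=\ell$ for $\ell\leq L$ (Property (6)) and the monotonicity statements in Properties (2)--(5). The crux will be part (2), where I must combine the involutive symmetry of $\tau$ on $[1,L]$ with its global weakly decreasing behavior.

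For part (1), the hypothesis $\ell<\tau(\ell)$ forces $\ell$ strictly inside $[1,L]$, so $\tau^2(\ell)=\ell$. The hypothesis $s(\ell)=n$ rewrites as $\tau(\ell)=n-1-\ell$; setting $\ell'=n-1-\ell$ we thus have $\tau(\ell')=\ell$. Using the defining relation $k=\ell\tau(\ell)-r_{-k,\ell}$ at both $\ell$ and $\ell'$ gives $r_{-k,\ell}=\ell\ell'-k=r_{-k,\ell'}$, proving the first equality. For the second, I would rewrite $k=\ell(\tau(\ell)-1)+(\ell-r_{-k,\ell})=\ell(n-2-\ell)+(\ell-r_{-k,\ell})$ and verify (using $\ell\leq\tau(\ell)-1=n-2-\ell$) that the value $\ell-r_{-k,\ell}$, interpreted as $\ell$ when $r_{-k,\ell}=0$, lies in $[1,n-2-\ell]$ and therefore serves as both $r^+_{k,\ell}$ and $r^+_{k,n-2-\ell}$.

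For part (2), the key step is the identity $\ell_1=\tau(\ell_2)$; once this is in hand, the fact $s(\ell_2)=n$ (which follows from $s(\ell_2+1)>n$, $s(\ell_2+1)\leq s(\ell_2)+1$, and $s(\ell_2)\leq n$) gives $\ell_1+\ell_2=n-1$, and a routine expansion using $r_{-k,\ell_2}=\ell_2\tau(\ell_2)-k=\ell_1\ell_2-k$ together with $\binom{a+1}{2}+\binom{b+1}{2}+ab=\binom{a+b+1}{2}$ yields $x_1+x_2=\binom{n}{2}-k$. To prove $\ell_1=\tau(\ell_2)$, I would argue two inequalities. First, Property (5) gives $s(\tau(\ell_2))\leq s(\ell_2)\leq n$, so $\tau(\ell_2)\geq\ell_1$ by minimality of $\ell_1$. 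Conversely, the identity $\min s=s(L)$ (which follows from $s$ being weakly decreasing on $[1,L]$ and satisfying $s(L+1)\geq s(L)$, a consequence of Properties (3)--(4)) combined with the nonemptiness of $S_{n,k}$ forces $\ell_1\leq L$; then Property (6) gives $\tau^2(\ell_1)=\ell_1$, and $s(\tau(\ell_1))=s(\ell_1)\leq n$ yields $\tau(\ell_1)\leq\ell_2$, so applying the weakly decreasing $\tau$ gives $\tau(\ell_2)\leq\tau^2(\ell_1)=\ell_1$.

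For part (3), my plan is to reduce to part (2) via the shift from $n$ to $n-1$. The hypothesis $n>\min s$ guarantees $\min s\leq n-1$, so part (2) applies at $n-1$. The set $\{\ell:s(\ell)\leq n-1\}=\{\ell:s(\ell)<n\}$ is the single interval $[\ell_1',\ell_2'-1]$: on $[1,L]$ the condition $s<n$ defines the suffix $[\ell_1',L]$, while on $[L+1,\infty)$ it defines the prefix $[L+1,\ell_2'-1]$ (because $s$ increments by at most one there and first attains $n$ at $\ell_2'$), and these two pieces glue together. Hence the $\ell_1$ and $\ell_2$ of part (2), taken at level $n-1$, are respectively $\ell_1'$ and $\ell_2'-1$; the conclusion of part (2) reads $\ell_1'+(\ell_2'-1)=(n-1)-1$, i.e., $\ell_1'+\ell_2'=n-1=\ell_1+\ell_2$, which rearranges to $\ell_1'-\ell_1=\ell_2-\ell_2'$.
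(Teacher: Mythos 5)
Your proof is correct and follows essentially the same route as the paper: part (1) via the division identity $k=\ell(n-1-\ell)-r_{-k,\ell}$, part (2) via the two-sided comparison of $\tau(\ell_2)$ with $\ell_1$ (using $\tau^2(\ell_1)=\ell_1$ for $\ell_1\leq L$) plus the identity $\binom{\ell_1+1}{2}+\binom{\ell_2+1}{2}+\ell_1\ell_2=\binom{\ell_1+\ell_2+1}{2}$, and part (3) by applying part (2) at $n-1$ with $\ell_1,\ell_2$ replaced by $\ell_1',\ell_2'-1$. The additional details you supply (that $\ell_1\leq L$, that $s(\ell_2)=n$, and that $\max\{\ell:s(\ell)\leq n-1\}=\ell_2'-1$) are exactly the facts the paper records as remarks before the lemma, and your justifications of them are sound, modulo the small observation that $s(L+1)\geq s(L)$ needs the extra inequality $L(L+1)\geq k$ rather than following verbatim from the stated Properties (3)--(4).
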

\begin{proof}
(1) Since $\lceil k/\ell \rceil=\tau(\ell)=n-1-\ell$,  
we have $k+r=\ell(n-1-\ell)$ for a unique $r$ satisfying $0\le r<\ell$. 
Then $-k=-\ell(n-1-\ell)+r$, implying $-k\equiv r \pmod \ell$ and 
$-k\equiv r \pmod{n-1-\ell}$. Note that $0\le r<\ell<n-1-\ell$, so 
$r_{-k,\ell}=r=r_{-k,n-1-\ell}$.  Next, note that $k=\ell(n-1-\ell)-r
=\ell(n-2-\ell)+(\ell-r)$ and $0<\ell-r\le \ell\le n-2-\ell$, thus 
$r^+_{k,\ell}=\ell-r=r^+_{k,n-2-\ell}$.

(2) To show the first equality: since $s(\tau(\ell_2))\le s(\ell_2)=n$, 
we conclude that $\tau(\ell_2)\ge \ell_1$ by the definition of $\ell_1$.
On the other hand, $\ell_1\le L$ implies $s(\tau(\ell_1))=s(\ell_1)\le n$, 
so we conclude that $\tau(\ell_1)\le \ell_2$ by the definition of $\ell_2$; 
then the weakly decreasing property of $\tau$ implies that  
$\tau^2(\ell_1)\ge \tau(\ell_2)$, i.e., $\ell_1\ge \tau(\ell_2)$.
Thus $\ell_1=\tau(\ell_2)$.

To show the second equality: $s(\ell_2)=n$ implies $\ell_2+\tau(\ell_2)+1=n$, 
thus $\ell_2+\ell_1+1=n$. It follows as in the proof of (1)  
that $k+r_{-k,\ell_2}=\ell_2(n-1-\ell_2)=\ell_1\ell_2$. So
$x_1+x_2+k=\binom{\ell_1+1}{2}+\binom{\ell_2+1}{2}+r_{-k,\ell_2}+k=
\binom{\ell_1+1}{2}+\binom{\ell_2+1}{2}+\ell_1\ell_2
=\binom{\ell_1+\ell_2+1}{2}=\binom{n}{2}$.

(3) Applying (2) to $n-1$ and noticing that 
$\ell_2'-1=\max\{\ell : s(\ell)=n-1\}$, we conclude that 
$\ell_1'=\tau(\ell'_2-1)$. Then $\ell_1'+(\ell'_2-1)+1=s(\ell'_2-1)=
n-1=\ell_1+\ell_2$, thus $\ell_1'-\ell_1=\ell_2-\ell_2'$.
\end{proof}

\subsection{Proof of Symmetry of $S_{n,k}$.}
\label{subsec:symm-Snk}

We are now ready to prove the symmetry of $S_{n,k}$.
First consider the case where $\ell_1'$ and $\ell_2'$ are defined
and $\ell_1<\ell_1'$. Then the sequence $A_{(k)}$ has the following form, 
where $*$ represents a number $>n$, and ${\rm o}$ 
represents a number $\le n$.
\begin{equation}\label{eq:pattern}
\aligned
 &\cdots **;
    \underbrace{n,\ldots,n}_{1+r_{-k,\ell_1}},
    \underbrace{n+1,\ldots,n+1}_{r^+_{k,\ell_1}};
    \cdots\cdots;
    \underbrace{n,\ldots,n}_{1+r_{-k,\ell_1'-1}},
    \underbrace{n+1,\ldots,n+1}_{r^+_{k,\ell_1'-1}};
   {\rm oo}\cdots\\
&\quad\quad\quad    \cdots {\rm oo}; \underbrace{n,\ldots,n}_{1+r_{-k,\ell_2'}},
    \underbrace{n+1,\ldots,n+1}_{r^+_{k,\ell_2'}};
    \cdots\cdots;
      \underbrace{n,\ldots,n}_{1+r_{-k,\ell_2}},
    \underbrace{n+1,\ldots,n+1}_{r^+_{k,\ell_2}};
  **\cdots
\endaligned
\end{equation}
So the subsequence $(a_{i_1},a_{i_1+1},\ldots,a_{i_2})$ of $A_{(k)}$ 
has the form
\begin{equation}\label{general case}
    \underbrace{\rm o\cdots o}_{1+r_{-k,\ell_1}}
    \underbrace{*\cdots *}_{r^+_{k,\ell_1}}
    \cdots\cdots
    \underbrace{\rm o\cdots o}_{1+r_{-k,\ell_1'-1}}
    \underbrace{*\cdots*}_{r^+_{k,\ell_1'-1}}
    \underbrace{\rm
      o\cdots\cdots o
      }_{{\rm central\; block\; } B}
       \underbrace{*\cdots*}_{r^+_{k,\ell_2'}}
        \underbrace{\rm o\cdots o}_{1+r_{-k,\ell_2'+1}}
        \cdots\cdots
       \underbrace{*\cdots*}_{r^+_{k,\ell_2-1}}
      \underbrace{\rm o\cdots o}_{1+r_{-k,\ell_2}}
\end{equation}
We see that $S_{n,k}$ is symmetric if $x_1+x_2=\binom{n}{2}-k$ and the pattern
of {\rm o}'s and $*$'s in \eqref{general case} is equal to its own reversal.
The first condition holds due to Lemma~\ref{easylemma}(2), which also
tells us that $\tau(\ell_2)=\ell_1$. Since $s(\ell_2)=n$, we deduce
that $\ell_2+\ell_1+1=n$. It now follows from Lemma~\ref{easylemma}(1)
that the leftmost and rightmost blocks of o's in~\eqref{general case}
have the same length, the leftmost and rightmost blocks of $*$'s have
the same length, and so on. Part (3) of the lemma ensures that there
are the same number of blocks of o's to the left and right of the central 
block $B$, so that~\eqref{general case} has the required symmetry.

Another case that can occur is when $\ell_1'$ and $\ell_2'$ are defined
and $\ell_1=\ell_1'$ (hence $\ell_2=\ell_2'$ by part (3) of the lemma).
In this case, the $n$'s and $(n+1)$'s explicitly displayed 
in~\eqref{eq:pattern} are absent, and~\eqref{general case} consists of
just the central block of o's. Here it suffices to know that
$x_1+x_2=\binom{n}{2}-k$, which follows from part (2) of the lemma.

A final special case occurs when $\min\{s(\ell):\ell\geq 1\}=n$ 
and hence $\ell_1',\ell_2'$ are undefined. In this case,
\eqref{general case} should be replaced by 
\begin{equation}\label{degenerate case}
    \underbrace{\rm o\cdots o}_{1+r_{-k,\ell_1}}
    \underbrace{*\cdots *}_{r^+_{k,\ell_1}}
    \underbrace{\rm o\cdots o}_{1+r_{-k,\ell_1+1}}
    \cdots\cdots
      \underbrace{\rm o\cdots o}_{1+r_{-k,\ell_2-1}}
       \underbrace{*\cdots*}_{r^+_{k,\ell_2-1}}
      \underbrace{\rm o\cdots o}_{1+r_{-k,\ell_2}},
\end{equation}
which is equal to its reversal by the same argument used above.

\subsection{Bijection on $\C_{(k)}\cap\DP_n$.}
\label{subsec:bij-Ck}

We can use the preceding constructions to describe an explicit bijection
$F_{n,k}$ on $\C_{(k)}\cap\DP_n$ that interchanges the statistics
$\area_n$ and $\dinv$. Suppose $\gamma$ is a Dyck partition in
$\C_{(k)}\cap\DP_n$ with $\area_n(\gamma)=i$ (and hence 
$\dinv(\gamma)=\binom{n}{2}-k-i$). Write $i$ uniquely in the 
form $\binom{\ell+1}{2}+m$ where $\ell,m\geq 0$ and $0\leq m\leq\ell$.
Then define $F_{n,k}(\gamma)=\nu^m(\I_{k,\ell})$, which has dinv equal to
$i$ (and hence $\area_n$ equal to $\binom{n}{2}-k-i$) by parts (2) and (3)
of Lemma~\ref{Delta_II}. The calculations in the preceding subsections
are necessary to know that $F_{n,k}(\gamma)$ is still in the set $\DP_n$.
Evidently, $F_{n,k}$ is an involution and is the unique bijection
on $\C_{(k)}\cap\DP_n$ interchanging the two statistics.

\section{Construction for Almost-Hook Shapes}
\label{sec:almost_hook_sec}

In this section, we construct collections of Dyck partitions
$\C_{(u,v)}$ that contain all the objects $\dpmap_n(\dyckmap(\lambda))$
for almost-hook shapes $\lambda$. The main result is that
$\C_{(ab-b-1,b-1)}\cap\DP_n$ and $\C_{(ab-a-1,a-1)}\cap\DP_n$
are $n$-opposite subsets for all $a,b\geq 2$ and all $n\geq 0$.

\subsection{Removing Cells from Partition Diagrams.}
\label{subsec:remove-cells}

To describe the Dyck partitions in $\C_{(u,v)}$, it is convenient to
introduce the following construction that removes specified cells from
a partition diagram. We totally order $\Z_{>0}^2$ by setting
$(i,j)<(i',j')$ iff either $i+j<i'+j'$, or $i+j=i'+j'$ and $i<i'$.
Given $\gamma\in\Par$ and $k\in\{1,2,\ldots,|\gamma|\}$, 
let $u_k=u_k(\gamma)$ be the $k$'th largest cell $(i,j)$ in the diagram 
of $\gamma$ relative to this total ordering. 

\begin{definition}
Suppose $\gamma\in\Par$ and $u_{i_1},\ldots,u_{i_s}$ are cells in
$\dg(\gamma)$ such that $\dg(\gamma)\setminus\{u_{i_1},\ldots,u_{i_s}\}$
is also the diagram of some partition $\beta$. By a slight abuse of notation,
we define \[ \gamma\setminus\{u_{i_1},\ldots,u_{i_s}\}=\beta. \]
\end{definition}

\begin{example}
Given $\gamma=(4,4,4,4,2,1)$, the cells $u_1,u_2,u_3,u_4$ in $\dg(\gamma)$
are shown below.
\[ \tableau{
{}&{}&{}&{}\\
{}&{}&{}&{}\\
{}&{}&{}&{}\\
{}&{}&{u_4}&{u_1}\\
{}&{u_3}\\
{u_2} } \]
We have $\gamma\setminus\{u_1,u_4\}=(4,4,4,2,2,1)$
and $\gamma\setminus\{u_2,u_3\}=(4,4,4,4,1)$.
\end{example}

\subsection{The Collections $\C_{(u,v)}$.}
\label{subsec:Cuv}

Fix an almost-hook shape $\lambda=(b,2,\underline{1}^{a-2})$,
where $a,b\geq 2$.  From Example~\ref{ex:Ikl}, we have
\begin{eqnarray*}
\dpmap_n(\dyckmap(\lambda))&=&\I_{(b-1)a, b-1} + \I_{a-2,1} \\
&=&(\underline{b}^{a-1},b-1,(b-1)\arrowdownto 1) \\
&=&\I_{b(a-1),b}-(\underline{0}^{a-1},1,\underline{0}^{b-1})
  =\I_{b(a-1),b}\setminus\{u_b\}.
\end{eqnarray*}

Having observed these expressions, we define Dyck partitions
$$\gamma_{\lambda,i} =\left\{ \begin{array}{ll}
\I_{(b-1)a, i} + \I_{a-2,1},  &\text{ if }1\leq i\leq b-1;\\ 
\I_{b(a-1), i+1}\setminus\{u_{b}\},  &\text{ if }b-1\leq i.
\end{array}\right.$$
(In particular, the two expressions coincide when $i=b-1$.) In the second case, one may check that the removal of cell $u_b$ subtracts
1 from position $j$ in $\I_{b(a-1),i+1}$, where 
\[ j=\left\{\begin{array}{ll}
i+2-r_{-b(a-1),i+1}-b+\ceil{b(a-1)/(i+1)}
&\text{ if }i+1-r_{-b(a-1),i+1}\geq b;\\
2i+2-r_{-b(a-1),i+1}-b+\ceil{b(a-1)/(i+1)}
&\text{ if }i+1-r_{-b(a-1),i+1}<b.\end{array}\right. \]
Next, define 
$$R(\gamma_{\lambda,i}) =\left\{ \begin{array}{ll}
\{\nu^m(\gamma_{\lambda,i}):0\leq m\leq i\},  
&\text{ if }1\leq i\leq b-2; \\
\{\nu^m(\gamma_{\lambda,i}):0\leq m\leq i+1\},  
&\text{ if }b-1\leq i. \end{array}\right.  $$
Finally, define
\begin{equation}\label{def:almost hook}
 \C_{(ab-b-1,b-1)}=\bigcup_{i=1}^{\infty} R(\gamma_{\lambda,i}).  
 \end{equation}
The notation $\C_{(ab-b-1,b-1)}$ is chosen so that it is 
compatible with Conjecture~\ref{strong_conj} below; note that
$a$, $b$, and $\lambda$ are uniquely determined by
$ab-b-1$ and $b-1$.


\begin{lemma}\label{ahk} 
Fix integers $a,b\geq 2$.
\begin{enumerate}
\item $\C_{(ab-b-1,b-1)}\subseteq \DP_{\ast,ab-2}
=\{\gamma\in\Par:\defc(\gamma)=ab-2\}$.  
\item For every integer $d\geq 2$, there exists a unique 
$\gamma\in\C_{(ab-b-1,b-1)}$ with $\dinv(\gamma)=d$.  
\end{enumerate}
\end{lemma}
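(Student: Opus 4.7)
The plan is to reduce both parts of the lemma to computing $|\gamma_{\lambda,i}|$ and $\dinv(\gamma_{\lambda,i})$ for the base partitions $\gamma_{\lambda,i}$. Indeed, Lemma~\ref{lem:propertiesnu} shows that $\nu$ preserves deficit and increments dinv by one, so every element of $R(\gamma_{\lambda,i})$ has deficit equal to $\defc(\gamma_{\lambda,i})$, and the dinv values in $R(\gamma_{\lambda,i})$ form a consecutive block of length $i+1$ or $i+2$ starting at $\dinv(\gamma_{\lambda,i})$.

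First I would compute the sizes. Using $|\I_{k,\ell}| = k + \binom{\ell+1}{2}$ (established in the proof of Lemma~\ref{Delta_II}), one gets
\[ |\gamma_{\lambda,i}| = ab - 1 + \binom{i+1}{2} \quad (1\le i\le b-1), \qquad |\gamma_{\lambda,i}| = ab - b - 1 + \binom{i+2}{2} \quad (i\ge b-1). \]
Both parts of the lemma then reduce to proving the dinv formulas
\[ \dinv(\gamma_{\lambda,i}) = \binom{i+1}{2} + 1 \quad (1\le i\le b-1), \qquad \dinv(\gamma_{\lambda,i}) = \binom{i+2}{2} - (b-1) \quad (i\ge b-1); \]
these imply $\defc(\gamma_{\lambda,i}) = ab-2$ uniformly, and they agree at $i = b-1$, consistent with the two coinciding defining expressions for $\gamma_{\lambda,b-1}$.

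The heart of the argument, and the main obstacle, is establishing these dinv formulas. My approach is to pass to Dyck vectors and exploit the explicit expression for $\dvmap_n(\I_{k,\ell})$ recorded in the proof of Lemma~\ref{Delta_II}. In the first case $1\le i\le b-1$, adding $\I_{a-2,1} = (\underline{1}^{a-1})$ to $\I_{(b-1)a,i}$ corresponds, via the formula for $\dvmap_n$, to subtracting $1$ from the last $a-1$ entries of the Dyck vector; the bound $i\le b-1$ forces $\lfloor (b-1)a/i\rfloor \ge a-2$, so these positions lie in the predictable ``rising'' tail of $\dvmap_n(\I_{(b-1)a,i})$, and a direct count of the net change in pairs with $v_s - v_t \in \{0,1\}$ yields $\dinv(\gamma_{\lambda,i}) = \dinv(\I_{(b-1)a,i}) + 1 = \binom{i+1}{2} + 1$. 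In the second case $i\ge b-1$, removing cell $u_b$ from $\I_{b(a-1),i+1}$ decreases a single prescribed entry of the Dyck vector by one; the piecewise formula for the affected position $j$ in the definition of $\gamma_{\lambda,i}$, distinguishing whether $i+1-r_{-b(a-1),i+1}\ge b$ or $<b$, records exactly where this entry sits and reflects whether $u_b$ lies at a corner of the staircase boundary of $\I_{b(a-1),i+1}$ or along its diagonal interior. In either sub-case, the dinv change becomes a local occurrence count of two consecutive integer values near the modified entry, and a case-by-case tally gives $\dinv(\gamma_{\lambda,i}) = \binom{i+2}{2}-(b-1)$.

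Granted these dinv formulas, part (2) is an elementary interval calculation: the dinv values in $R(\gamma_{\lambda,i})$ fill $[\binom{i+1}{2}+1,\binom{i+2}{2}]$ for $1\le i\le b-2$, fill $[\binom{b}{2}+1,\binom{b+1}{2}+1]$ for $i=b-1$, and fill $[\binom{i+2}{2}-b+1,\binom{i+3}{2}-b]$ for $i\ge b$. A routine computation confirms that consecutive intervals abut with no overlap and no gap, and that the total union is $\{2,3,4,\ldots\}$. Combined with the fact that $\nu$ strictly increases dinv within each $R(\gamma_{\lambda,i})$, this yields existence and uniqueness in part (2) and simultaneously shows that the union in~\eqref{def:almost hook} defining $\C_{(ab-b-1,b-1)}$ is disjoint. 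A subsidiary step, analogous to Lemma~\ref{Delta_II}(1), is verifying that $\nu^m(\gamma_{\lambda,i})$ is well-defined throughout the stated range of $m$; this is handled by tracking the condition $\gamma_1\le\ell(\gamma)+2$ through the iteration using the same Dyck-vector description.
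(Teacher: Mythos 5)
Your proposal follows essentially the same route as the paper's own (sketched) proof: pass to the explicit Dyck vectors of the base objects $\gamma_{\lambda,i}$, compute their size, $\dinv$, and deficit as in Lemma~\ref{Delta_II}, then use Lemma~\ref{lem:propertiesnu} so that each $R(\gamma_{\lambda,i})$ contributes a consecutive block of $\dinv$ values, and check that these blocks tile $\{2,3,\ldots\}$; your intermediate formulas are correct and match the paper's data. The only slip is cosmetic: removing the cell $u_b$ \emph{increases} (not decreases) one Dyck-vector entry by $1$ (the paper's $+\epsilon$), which does not affect the argument.
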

\begin{proof} 
This is similar to the proof of Lemma~\ref{Delta_II},  
using Lemma~\ref{lem:propertiesnu} and the following Dyck vectors for 
$\gamma_{\lambda,i}$.  For $i$ in the range $1\leq i\leq b-1$, let 
$k=(b-1)a$ and $n=i+\ceil{k/i}+2$; then
\[ \DV_n(\gamma_{\lambda,i})=
(0,1,\underline{2}^{r_{-k,i}},\underline{1}^{i-r_{-k,i}},
 2\arrowupto(\ceil{k/i}-a+2),
 (\ceil{k/i}-a+2)\arrowupto\ceil{k/i}). \]
For $i\geq b$, let $k=b(a-1)$ and $n=i+\ceil{k/(i+1)}+3$; then
\[ \DV_n(\gamma_{\lambda,i})=
(0,1,\underline{2}^{r_{-k,i+1}},\underline{1}^{i+1-r_{-k,i+1}},
 2\arrowupto(\ceil{k/(i+1)}+1))+\epsilon, \]
where $\epsilon$ is the sequence with a 1 in position
\[ \left\{\begin{array}{ll}
r_{-k,i+1}+b+2 & \text{ if }i+1-r_{-k,i+1}\geq b, \\
r_{-k,i+1}+b-i+2& \text{ if }i+1-r_{-k,i+1}<b,
\end{array}\right.  \]
and zeroes elsewhere.  
\end{proof}

Part (2) of the previous lemma justifies the following definition.
\begin{definition}
For $a,b,i\geq 2$, we let $C_{(ab-b-1,b-1),i}$ 
denote the unique $\gamma\in\C_{(ab-b-1,b-1)}$ with $\dinv(\gamma)=i$.
\end{definition}

\begin{theorem}\label{almost_hook_chain}
For all $a,b\geq 2$ and all $n\geq 0$,
$\C_{(ab-b-1,b-1)}\cap\DP_n$ and $\C_{(ab-a-1,a-1)}\cap\DP_n$
are $n$-opposite to each other.
\end{theorem}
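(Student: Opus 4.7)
The plan is to follow the strategy of the proof of Theorem~\ref{hook_chain}, adapted to compare two different collections rather than establishing self-opposition. Both $\C_{(ab-b-1,b-1)}$ and $\C_{(ab-a-1,a-1)}$ are contained in $\DP_{\ast,ab-2}$ by Lemma~\ref{ahk}(1), since $(ab-b-1)+(b-1)=(ab-a-1)+(a-1)=ab-2$. Hence every $\gamma$ in either collection lying in $\DP_n$ satisfies $\area_n(\gamma)+\dinv(\gamma)=\binom{n}{2}-(ab-2)$, and by Lemma~\ref{ahk}(2), elements of each collection are uniquely indexed by their dinv values.

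With $N=\binom{n}{2}-(ab-2)$, I would define the candidate bijection $g$ sending $C_{(ab-b-1,b-1),d}$ to $C_{(ab-a-1,a-1),N-d}$; by construction, this pair is a pair of $n$-opposites, and the uniqueness requirement in the definition of $n$-opposite follows from the distinctness of dinv values within each collection. The only remaining task is well-definedness of $g$ (in both directions): one must show that $C_{(ab-b-1,b-1),d}\in\DP_n$ if and only if $C_{(ab-a-1,a-1),N-d}\in\DP_n$. Equivalently, the set of indices $d$ for which $\Delta(C_{(ab-b-1,b-1),d})\leq n$ must be the reversal, under $d\mapsto N-d$, of the analogous set for the conjugate collection.

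To analyze $\Delta$ values, I would use the explicit Dyck vectors for $\gamma_{\lambda,i}$ given in the proof of Lemma~\ref{ahk}, then invoke Lemma~\ref{lem:propertiesnu} together with the rewriting rule~\eqref{eq:nu-on-dv} to compute $\Delta(\nu^m(\gamma_{\lambda,i}))$ for each $m$ in the allowed range. This produces two sequences $B^{(1)}$ and $B^{(2)}$ tracking the $\Delta$ values of $C_{(ab-b-1,b-1),d}$ and $C_{(ab-a-1,a-1),d}$ respectively as $d$ increases; each entry has the form $i+\lceil k/i\rceil+c$ with $c\in\{2,3\}$ and a unit-jump controlled by the remainder $r_{-k,i}$ or $r_{-k,i+1}$, precisely parallel to the hook case. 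I would then establish analogues of Lemma~\ref{easylemma} showing that the pattern of ``$\leq n$'' entries in $B^{(1)}$ is the $d\mapsto N-d$ reversal of the pattern in $B^{(2)}$, exploiting that the involution $a\leftrightarrow b$ exchanges $(ab-b-1,b-1)$ with $(ab-a-1,a-1)$ and exchanges the underlying constants $(b-1)a$ and $b(a-1)$ appearing in the two regimes. Once these identities are in place, a palindrome argument analogous to Section~\ref{subsec:symm-Snk} concludes the proof.

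The main obstacle is the bookkeeping caused by the two regimes in the definition of $\gamma_{\lambda,i}$, namely $1\leq i\leq b-1$ using $\I_{(b-1)a,i}+\I_{a-2,1}$ versus $i\geq b-1$ using $\I_{b(a-1),i+1}\setminus\{u_b\}$, together with the same phenomenon for the conjugate collection split at $i=a-1$. The swap $a\leftrightarrow b$ does not respect these regime boundaries, so one must verify the symmetry in mixed cases where $d$ and $N-d$ lie in different regimes; moreover, the cell removal $\setminus\{u_b\}$ introduces an auxiliary position shift that must be tracked carefully through the $\Delta$ computation, especially at the boundary $i=b-1$ where the two formulas for $\gamma_{\lambda,i}$ agree. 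Small values of $n$ (where $N<2$) force both intersections to be empty and are handled trivially.
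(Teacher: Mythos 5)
Your plan is essentially the paper's own proof: index both collections by dinv (Lemma~\ref{ahk}), reduce $n$-opposition to showing the set of indices $d$ with $\Delta\leq n$ is sent to itself under $d\mapsto\binom{n}{2}-(ab-2)-d$, and establish this by comparing the patterns of $\Delta$-values via the hook-case machinery of \S\ref{subsec:hook-lem-def}--\ref{subsec:symm-Snk}, using exactly the observation that swapping $a\leftrightarrow b$ interchanges the two collections while exchanging the constants $ab-a$ and $ab-b$ governing the two regimes. The ``mixed-regime'' verification you flag as the remaining obstacle is precisely what the paper carries out through its case split $n\leq a+b$ versus $n>a+b$ (patterns \eqref{almosthook case a}--\eqref{almosthook case b}) together with the check that the extreme elements $C_{S,x_1}$ and $C_{S',y_2}$ form a pair of $n$-opposites.
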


We prove this theorem in the following subsections.
Throughout, we fix $a,b\geq 2$ and define $k=ab-2$,
$S=(ab-b-1,b-1)$, and $S'=(ab-a-1,a-1)$.

\subsection{Exponent Pairs Appearing in $\C_{S}$ and $\C_{S'}$.}
\label{subsec:exp-pairs-almost-hook}


Define the sequence $A_{S}=(a_2,a_3,\ldots)$ by setting $a_i=\Delta(C_{S,i})$
for $i\geq 2$. 
Define the sequence $A_{S'}=(b_2,b_3,\ldots)$ by setting $b_i=\Delta(C_{S',i})$
for $i\geq 2$. 

For any $n\geq 0$, define
$$\begin{array}{lcl}
T_{n,k}&=&\{(\dinv(\gamma), \area_n(\gamma)):\gamma\in\C_{S}\cap\DP_n\};\\
T'_{n,k}&=&\{(\dinv(\gamma), \area_n(\gamma)):\gamma\in\C_{S'}\cap\DP_n\}. 
\end{array}$$
By Lemma~\ref{ahk},
$$\aligned
&T_{n,k}=\left\{\left(i,\binom{n}{2}-k-i\right):i\ge 2, a_i\le n\right\};\\
&T'_{n,k}=\left\{\left(i,\binom{n}{2}-k-i\right):i\ge 2, b_i\le n\right\}.
\endaligned$$
Theorem~\ref{almost_hook_chain} is equivalent to the assertion that
$(x,y)\in T_{n,k}$ iff $(y,x)\in T'_{n,k}$.

\begin{example}
Let $a=3$ and $b=4$, so $\lambda=(4,2,1)$, $\lambda'=(3,2,1,1)$, $k=10$,
$S=(7,3)$, and $S'=(8,2)$.
We compute
$$\aligned
A_S&=(\underbrace{11, 12}_{\Delta(R(\gamma_{\lambda,1}))},  \underbrace{\underline{8}^2, 9}_{\Delta(R(\gamma_{\lambda,2}))}, \underbrace{7, \underline{8}^4}_{\Delta(R(\gamma_{\lambda,3}))}, \underbrace{\underline{8}^3, \underline{9}^3}_{\Delta(R(\gamma_{\lambda,4}))}, \underbrace{\underline{9}^5, \underline{10}^2}_{\Delta(R(\gamma_{\lambda,5}))}, \underbrace{\underline{10}^7, 11}_{\Delta(R(\gamma_{\lambda,6}))}, 10, 11,\ldots)\\
&=(11,12,\underline{8}^2, 9, 7, \underline{8}^7, \underline{9}^8, \underline{10}^9, 11, 10, \underline{11}^{10},\ldots); \\
A_{S'}&=(\underbrace{10, 11}_{\Delta(R(\gamma_{\lambda',1}))}, \underbrace{7, \underline{8}^3}_{\Delta(R(\gamma_{\lambda',2}))}, \underbrace{\underline{8}^4, 9}_{\Delta(R(\gamma_{\lambda',3}))}, \underbrace{\underline{8}^2, \underline{9}^4}_{\Delta(R(\gamma_{\lambda',4}))}, \underbrace{\underline{9}^4, \underline{10}^3}_{\Delta(R(\gamma_{\lambda',5}))}, \underbrace{\underline{10}^6, \underline{11}^2}_{\Delta(R(\gamma_{\lambda',6}))}, \underbrace{\underline{11}^8, \underline{12}}_{\Delta(R(\gamma_{\lambda',7}))},11,12, \ldots)\\
&=(10,11,7,\underline{8}^7,9, \underline{8}^2, \underline{9}^8, \underline{10}^9, \underline{11}^{10},12,11,\underline{12}^{11},\ldots).
\endaligned$$
So we find, in accordance with Theorem~\ref{almost_hook_chain}, that
$$\begin{array}{lcl}T_{7,10}&=&\{(7,21-10-7)\}=\{(7,4)\};\\
        T'_{7,10}&=&\{(4, 21-10-4)\}=\{(4,7)\};\\
        T_{8,10} &=& \{(4,14),(5,13),...,(14,4)\}\setminus\{(6,12)\};\\
        T'_{8,10} &=& \{(4,14),(5,13),...,(14,4)\}\setminus\{(12,6)\};\\
        T_{9,10}&=&\{(4,22),(5,21),...,(22,4)\}=T_{9,10}';\\
        T_{10,10} &=& \{(4,31),(5,30),...,(33,2)\}\setminus\{(32,3)\};\\
        T'_{10,10} &=& \{(2,33),(3,32),...,(31,4)\}\setminus\{(3,32)\}.
\end{array}$$ 
\end{example}

\subsection{Proof of Theorem~\ref{almost_hook_chain}}
\label{subsec:prove-almost-hook}

We may assume $T_{n,k}$ and $T'_{n,k}$ are nonempty.
Define $s(\ell)=\ell+\ceil{(b-1)a/\ell}+1$
and $\tilde{s}(\ell)=\ell+\ceil{b(a-1)/\ell}+1$. Then let
\begin{flalign*}
&& x_1=\min\{i: a_i\le n\},\quad \quad & x_2=\max\{i: a_i\le n\}=\max\{i: a_i= n\},&&\\
&& y_1=\min\{i: b_i\le n\},\quad \quad & y_2=\max\{i: b_i\le n\}=\max\{i: b_i= n\},&&\\
&& \ell_1=\min\{\ell : s(\ell)\le n\},\quad &\ell_2=\max\{\ell : s(\ell)\le n\}=\max\{\ell : s(\ell)= n\}, && \\
&& \ell_1'=\min\{\ell:s(\ell)<n\}, \quad &\ell_2'=\min\{\ell>\ell_1': s(\ell)=n\} \quad\text{(if these exist)}.&&
\end{flalign*}
Define $\widetilde{\ell_1},\widetilde{\ell_2},\widetilde{\ell_1'},
\widetilde{\ell_2'}$ similarly by replacing $s(\ell)$ with 
$\tilde{s}(\ell)$ throughout. Special cases occur when
$\ell_1'$, $\ell_2'$, $\widetilde{\ell_1'}$, or $\widetilde{\ell_2'}$
are undefined, or when $\ell_1=\ell_1'$ or $\widetilde{\ell_1}
=\widetilde{\ell_1'}$. These cases, which are easier than the generic case 
discussed below, can be handled by arguments analogous to that leading
to~\eqref{degenerate case}.

Let $j$, $i'$ be two integers determined by $R(\gamma_{\lambda,i})=\{C_{S,j}, C_{S,j+1},\ldots,C_{S,j+i'}\}$.
Then $(a_j, a_{j+1},\ldots,a_{j+i'})$ has the form 
$(d,\ldots,d,d+1,\ldots,d+1)$. Hence $C_{S,x_1}=
\gamma_{\lambda,i}$ for some $i$. 
Using 
$$\Delta(\gamma_{\lambda,i})=
\begin{cases}
\Delta(\I_{(b-1)a, i})=i+\lceil \frac{ab-a}{i}\rceil +1,& \textrm{ if } i\le b-1, \\
\Delta(\I_{(a-1)a, i})=i+\lceil \frac{ab-b}{i+1}\rceil +2,& \textrm{ if } i\ge b, \\
\end{cases}$$
one may check that:

(i) If $a<b$, then
$$
\aligned
&\Delta(\gamma_{\lambda,1})\geq\cdots\geq\Delta(\gamma_{\lambda,a-1})>\Delta(\gamma_{\lambda,a})=a+b=\Delta(\gamma_{\lambda,b-1})<\Delta(\gamma_{\lambda,b})\leq \Delta(\gamma_{\lambda,b+1})\leq\cdots,\\
&\Delta(\gamma_{\lambda',1})\geq\cdots\geq\Delta(\gamma_{\lambda',a-2})>\Delta(\gamma_{\lambda',a-1})=a+b=\Delta(\gamma_{\lambda',b-2})<\Delta(\gamma_{\lambda',b-1})
\leq\Delta(\gamma_{\lambda',b})\leq\cdots,\\
\endaligned
$$
and $\Delta(\gamma_{\lambda,i})\le a+b$ when $a<i<b-1$,  $\Delta(\gamma_{\lambda',i})\le a+b$ when $a-1<i<b-2$. 

(ii) If $a=b$, then $\lambda=\lambda'$ and $\Delta(\gamma_{\lambda,a-2})>\Delta(\gamma_{\lambda,a-1})=2a<\Delta(\gamma_{\lambda,a})$.
$$\Delta(\gamma_{\lambda,1})\geq\cdots\geq\Delta(\gamma_{\lambda,a-2})>\Delta(\gamma_{\lambda,a-1})=2a<\Delta(\gamma_{\lambda,a})\leq \Delta(\gamma_{\lambda,a+1})\leq\cdots.$$

First consider the case $n\leq a+b$. If $a=b$ then 
$$\C_{(a^2-a-1,a-1)}\cap\DP_n=
\left\{\begin{array}{ll} \emptyset, & \text{ for }n<2a;\\
\{\gamma_{\lambda,a-1}\}, & \text{ for }n=2a, \end{array}\right.$$
and $\area_n(\gamma_{\lambda,a-1})=\dinv(\gamma_{\lambda,a-1})=\binom{a}{2}+1$ for $n=2a$,  hence $C_{(a^2-a-1,a-1)}\cap\DP_n$ is $n$-opposite to itself.

When $a<b$ it is enough to show that 
$\DP_n\cap\bigcup_{i=a}^{b-1} R(\gamma_{\lambda,i})$ and 
$\DP_n\cap\bigcup_{i=a-1}^{b-2} R(\gamma_{\lambda',i})$ 
are $n$-opposite to each other.
Since
\begin{eqnarray*}
\Delta(\nu^m(\gamma_{\lambda,i}))
&=& \Delta(\nu^m(\I_{(b-1)a, i} + \mathbb{I}_{a-2,1}))
   =\Delta(\nu^m(\I_{(b-1)a, i})) \\ & &
 \text{ for }a\leq i\leq b-1\mbox{ and }0\leq m\leq i; \\
\Delta(\nu^m(\gamma_{\lambda',i})) 
&=&\Delta(\nu^m(\I_{a(b-1), i+1}\setminus\{u_{a}\}))
  =\Delta(\nu^m(\I_{a(b-1), i+1}))\\ & & 
 \text{ for } a-1\leq i\leq b-2\mbox{ and }0\leq m\leq i+1, 
\end{eqnarray*}
the subsequence $(a_{x_1},a_{x_1+1},\ldots,a_{x_2})$ of $A_{S}$ 
and the subsequence $(b_{y_1},b_{y_1+1},\ldots,b_{y_2})$ of $A_{S'}$   
are of the same form~\eqref{general case} with $k=ab-a$. Since \eqref{general case} is equal to its reversal,  
it suffices to check that $C_{S,x_1}$ and $C_{S',y_2}$ form a pair of 
$n$-opposites, because the rest follows from the same argument as in the 
proof of Lemma~\ref{easylemma}.   

We have
$$\begin{array}{lcl}C_{S,x_1}=\mathbb{I}_{(b-1)a, a} + \mathbb{I}_{a-2,1};\\
C_{S',y_2}=\mathbb{I}_{a(b-1),b-1}\setminus\{u_{a}\}.\end{array}$$
 Hence
$$
\operatorname{dinv}(C_{S,x_1})=\operatorname{dinv}(\mathbb{I}_{(b-1)a, a})+1=\operatorname{area}_n(\mathbb{I}_{a(b-1), b-1})+1=
\operatorname{area}_n(C_{S',y_2})
$$
Since both $C_{S,x_1}$ and $C_{S',y_2}$ have the same deficit $ab-2$, the above equality implies  
$\operatorname{area}(C_{S,x_1})=\operatorname{dinv}_n(C_{S',y_2})$, thus we conclude that 
 $C_{S,x_1}$ and $C_{S',y_2}$ form a pair of $n$-opposites. 

Next consider the case $n>a+b$. 
Let $k=(b-1)a$ and $k'=b(a-1)$. 
Then the subsequence $(a_{x_1},a_{x_1+1},\ldots,a_{x_2})$ of $A_{S}$ 
has the form
\begin{equation}\label{almosthook case a}
    \underbrace{\rm o\cdots o}_{1+r_{-k,\ell_1}}
    \underbrace{*\cdots *}_{r^+_{k,\ell_1}}
    \cdots\cdots
    \underbrace{\rm o\cdots o}_{1+r_{-k,\ell_1'-1}}
    \underbrace{*\cdots*}_{r^+_{k,\ell_1'-1}}
    \underbrace{\rm
      o\cdots\cdots o
      }_{{\rm central\; block\; } B}
       \underbrace{*\cdots*}_{r^+_{k',\widetilde{\ell_2'}}}
        \underbrace{\rm o\cdots o}_{1+r_{-k',\widetilde{\ell_2'}+1}}
        \cdots\cdots
       \underbrace{*\cdots*}_{r^+_{k',\widetilde{\ell_2}-1}}
      \underbrace{\rm o\cdots o}_{1+r_{-k',\widetilde{\ell_2}}},
\end{equation}
and the subsequence $(b_{y_1},b_{y_1+1},\ldots,b_{y_2})$ of $A_{S'}$ has the form
\begin{equation}\label{almosthook case b}
    \underbrace{\rm o\cdots o}_{1+r_{-k',\widetilde{\ell_1}}}
    \underbrace{*\cdots *}_{r^+_{k',\widetilde{\ell_1}}}
    \cdots\cdots
    \underbrace{\rm o\cdots o}_{1+r_{-k',\widetilde{\ell_1'}-1}}
    \underbrace{*\cdots*}_{r^+_{k',\widetilde{\ell_1'}-1}}
    \underbrace{\rm
      o\cdots\cdots o
      }_{{\rm central\; block\; } B}
       \underbrace{*\cdots*}_{r^+_{k,\ell_2'}}
        \underbrace{\rm o\cdots o}_{1+r_{-k,\ell_2'+1}}
        \cdots\cdots
       \underbrace{*\cdots*}_{r^+_{k,\ell_2-1}}
      \underbrace{\rm o\cdots o}_{1+r_{-k,\ell_2}}.
\end{equation}
Using the facts proved in \S2.6 and \S2.7, the above two forms \eqref{almosthook case a} and \eqref{almosthook case a} are mutual reversal. Thus it suffices to check that $C_{S,x_1}$ and $C_{S',y_2}$ (resp.
$C_{S,x_2}$ and $C_{S',y_1}$) form a pair of $n$-opposites, 
which is straightforward as above. Finally, we can define an explicit 
bijection from $\C_{(ab-b-1,b-1)}\cap\DP_n$ to $\C_{(ab-a-1,a-1)}\cap\DP_n$
interchanging $\area_n$ and $\dinv$ by the same method used in
\S\ref{subsec:bij-Ck}.

\section{Conjectures}
\label{sec:stronger_conj}

In this section we propose conjectures that
generalize Theorems~\ref{hook_chain} and~\ref{almost_hook_chain}.

\subsection{Conjectured Decomposition of Level $k$ Objects into Chains.}
\label{subsec:conj-chain}

\begin{definition}
For any partition $\gamma=(\gamma_1,...,\gamma_v)$, 
let $H_\gamma=\{j \ :  \ \gamma_j=\gamma_{j+1}=\gamma_{j+2}>0\}$,  
and define $$\rho(\gamma)=\left\{\begin{array}{ll} \gamma_i, 
&\text{ if   $\nu(\alpha)$ is defined and }H_\gamma\neq\emptyset\text{ and }i=\max(H_\gamma);\\
\gamma_1, &\text{ if  $\nu(\alpha)$ is defined and }H_\gamma=\emptyset; \\  
0, &\text{ if $\nu(\alpha)$ is not defined.} \\
\end{array}   \right.$$
Let $R(\gamma)=\{\nu^m(\gamma):0\leq m\leq\rho(\gamma) \}
\subseteq\Par$. One may check that $\nu^m(\gamma)$ is well-defined 
for $0\leq m\leq \rho(\gamma)$.
\end{definition}

\begin{conjecture}\label{wdssPar}
For any $\lambda\in\Par(k)$, there exists a set 
$\{\gamma_{\lambda,i}:i\in\Z_{>0}\}$ of Dyck partitions
with $|\gamma_{\lambda,1}|<|\gamma_{\lambda,2}|<\cdots$ 
satisfying the following conditions:
\begin{enumerate}
\item $\dpmap_k(\dyckmap(\lambda))=\gamma_{\lambda,i}$ for some $i$. 

\item There exists $t\in\Z_{>0}$ such that
$i\mapsto \Delta(\gamma_{\lambda,i})$ is weakly decreasing 
on $\{1,2,\ldots,t\}$ and weakly increasing on $\{t,t+1,\ldots\}$.

\item $\defc(\gamma_{\lambda,i})$ is a constant not depending on $i$
(namely, $\sum_{r=1}^{\ell(\lambda)-1} (-1+\lambda_r)
\sum_{s=r+1}^{\ell(\lambda)} \lambda_s$, which is the number of pairs
of cells in $\dg(\lambda)$ that are not in the same row or column).

\item For any integer $d\geq |\gamma_{\lambda,1}|$, there exists a 
unique Dyck partition $\gamma\in\bigcup_{i\geq 1} R(\gamma_{\lambda,i})$ 
such that $|\gamma|=d$.

\item For all $n\geq 0$, $\DP_n\cap\bigcup_{i\geq 1} R(\gamma_{\lambda,i})$ 
and $\DP_n\cap\bigcup_{i\geq 1} R(\gamma_{\lambda',i})$ are $n$-opposite 
to each other.
\end{enumerate}
\end{conjecture}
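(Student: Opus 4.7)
The plan is to imitate the strategy that succeeded in Theorems~\ref{hook_chain} and~\ref{almost_hook_chain}: produce canonical ``seed'' Dyck partitions $\gamma_{\lambda,i}$, generate the chains $R(\gamma_{\lambda,i})$ by iterating $\nu$, and verify that the resulting $\Delta$-sequences for $\lambda$ and $\lambda'$ are mutual reversals. From Example~\ref{ex:Ikl}, $\dpmap_k(\dyckmap(\lambda))=\sum_{r=1}^{\ell(\lambda)}\I_{e_r,\lambda_r-1}$ for specific exponents $e_r$, so I would define $\gamma_{\lambda,i}$ by fixing the ``upper'' summands (those indexed by $\lambda_1,\ldots,\lambda_{\ell(\lambda)-1}$) and letting the last row parameter range over $i$, with cell-removal corrections in the spirit of the $u_b$-deletion used for almost-hooks. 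Condition~(1) then holds by construction for some $i_0$.

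Condition~(3) should be essentially routine: $\dinv$ and $|\cdot|$ are well-behaved on the $\I$-summands, so $\defc(\gamma)=|\gamma|-\dinv(\gamma)$ evaluates to the stated quantity $\sum_r(\lambda_r-1)\sum_{s>r}\lambda_s$, and it is invariant under $\nu$ by Lemma~\ref{lem:propertiesnu}. For (2) and (4), the unimodality of $i\mapsto\Delta(\gamma_{\lambda,i})$ should follow from the analog of the monotonicity of $s(\ell)=\ell+\tau(\ell)+1$ established in Section~\ref{subsec:hook-lem-def}, and the uniqueness of size $d$ follows from Lemma~\ref{lem:propertiesnu} (since $\nu$ decreases $\area_n$ by $1$ and thus increases $|\gamma|$ by $1$) combined with the unique expression $i=\binom{\ell+1}{2}+m$ with $0\leq m\leq\ell$.

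The main obstacle, and presumably the reason the statement is left conjectural, is condition~(5). In the hook case the pattern~\eqref{general case} was self-reversing via Lemma~\ref{easylemma}, and in the almost-hook case the patterns~\eqref{almosthook case a} and~\eqref{almosthook case b} were mutual reversals via the duality $(b-1)a\leftrightarrow b(a-1)$ between the two exponents governing $\tau$ and $\tilde\tau$. For general $\lambda$ one must discover the correct pairing of the multiple inner $\I$-components of $\gamma_{\lambda,i}$ with those of $\gamma_{\lambda',i}$, and then match the interleaved blocks of $n$'s and $(n+1)$'s in the $\Delta$-sequences of both sides simultaneously.

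A plausible attack is induction on $\ell(\lambda)$: peel the first part $\lambda_1$ off as an outer ``hook layer'' and treat the remainder $(\lambda_2,\lambda_3,\ldots)$ as an inductive inner structure, hoping that $\lambda'$ admits a parallel decomposition compatible with the one for $\lambda$. The genuine difficulty is that, as already visible in the almost-hook case, each cell-removal correction shifts the $\Delta$-value by $1$ in a position that depends subtly on the residues $r_{-k,\ell}$ and $r^+_{k,\ell}$, and these shifts have to align on the $\lambda$ side with analogous shifts on the $\lambda'$ side. Coordinating all such shifts for multiple components at once — and checking that the endpoint pair $(C_{S,x_1},C_{S',y_2})$ (and symmetrically $(C_{S,x_2},C_{S',y_1})$) remains a pair of $n$-opposites, as in the calculation ending Section~\ref{subsec:prove-almost-hook} — appears to be the essential new combinatorial obstruction.
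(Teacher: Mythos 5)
You should note first that the statement you were asked to prove is stated in the paper as Conjecture~\ref{wdssPar}: the paper itself contains no proof of it in general. The authors establish it only for hook shapes (Theorem~\ref{hook_chain}) and almost-hook shapes (Theorem~\ref{almost_hook_chain}), and report computational verification for $\lambda=(3,3)$, $(4,3)$, $(3,3,1)$, $(5,3)$, $(4,3,1)$, $(3,3,1,1)$ and their conjugates. Your proposal accurately reconstructs the paper's strategy for those known cases — seeds built from the $\I_{k,\ell}$ decomposition of Example~\ref{ex:Ikl}, chains generated by $\nu$ with the invariance of $\defc$ from Lemma~\ref{lem:propertiesnu}, and the reversal arguments for the o/$*$ patterns as in~\eqref{general case} and~\eqref{almosthook case a}--\eqref{almosthook case b} — and you correctly diagnose condition~(5), the $n$-opposite matching of the $\Delta$-sequences for $\lambda$ and $\lambda'$, as the essential obstruction.

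However, as a proof of the full statement your proposal has a genuine gap, and it is the same gap that makes the statement a conjecture: you never actually define the chains $\gamma_{\lambda,i}$ for a general partition $\lambda$ with three or more non-hook rows. The phrase ``fix the upper summands and let the last row parameter range over $i$, with cell-removal corrections'' is not a construction — already in the almost-hook case the corrected partition switches formulas at $i=b-1$ and the position of the removed cell depends delicately on the residues $r_{-k,i+1}$, and for general $\lambda$ there is no candidate in the paper (nor in your proposal) for how the several inner $\I$-components should deform as $i$ grows while keeping $\defc$ constant, keeping $\dinv$ surjective onto a tail of $\Z_{\geq 0}$ (condition~(4)), and keeping $i\mapsto\Delta(\gamma_{\lambda,i})$ unimodal (condition~(2)). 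Consequently conditions~(2)--(5) are not verified for any shape beyond those already covered by Theorems~\ref{hook_chain} and~\ref{almost_hook_chain}; even condition~(3), which you call routine, presupposes the undefined construction. Your proposed induction on $\ell(\lambda)$ is a plausible research direction, but since the peeling of $\lambda_1$ does not interact simply with conjugation (the inner shape of $\lambda'$ is not the conjugate of the inner shape of $\lambda$), the coordination of the block patterns on the two sides — the heart of condition~(5) — remains entirely open, exactly as in the paper.
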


Theorems~\ref{hook_chain} and~\ref{almost_hook_chain} imply that 
Conjecture~\ref{wdssPar} holds for partitions of hook shape or 
almost hook shape. We also checked that Conjecture~\ref{wdssPar} 
holds for $\lambda=(3,3)$, $(4,3)$, $(3,3,1)$, $(5,3)$, $(4,3,1)$, 
$(3,3,1,1)$, and the conjugates of these partitions.  

\subsection{Strong Conjecture on the Structure of Level $k$ Objects}
\label{subsec:strong-conj}

We now propose another conjecture, which is stronger than  
Conjectures~\ref{conj:DP} and~\ref{wdssPar}. 
When we construct $\DP_{\mu}$ and $\overline{\DP}_{\mu}$ for 
a partition $\mu$, we want to include the most natural Dyck vector 
from which $\mu$ can be reconstructed. This Dyck vector is defined as follows.

\begin{definition}\label{def:natural}
For any partition $\mu$, let $\zeta=\zeta(\mu)=\mu_1+\ell(\mu)$.  
Write $\mu=(\underline{k}^{e_k},\ldots,
\underline{2}^{e_2},\underline{1}^{e_1})$ with all $e_j\geq 0$ and $e_k>0$,
and define 
\begin{equation}\label{eq:nat-DV}
\gamma_{\mu}=\dpmap_{\zeta+1}(0,0,\underline{1}^{e_1},0,\underline{1}^{e_2},
\ldots,0,\underline{1}^{e_k}).
\end{equation}
Equivalently, we can write 
$$\gamma_{\mu}=\I_{0,\zeta}\setminus \{u_{\mu_1+\ell(\mu)},
 u_{\mu_2+\ell(\mu)-1},u_{\mu_3+\ell(\mu)-2},\ldots\}.$$
For $\mu=(0)$, we set $\zeta(\mu)=0$ and $\gamma_{\mu}=(0)$.
\end{definition}


\begin{example}
Let $\mu=(2,2,1,1,1)\in\Par(7)$. Here $\zeta(\mu)=2+5=7$
and $\dvmap_8(\gamma_{\mu})=(0,0,1,1,1,0,1,1)$, so
$\gamma_{\mu}=(6,5,5,3,2,1,1,0)$. The following diagram shows
how we can compute $\gamma_{\mu}$ by removing cells 
$u_7,u_6,u_4,u_3,u_2$ from the diagram of $\I_{0,7}=\Delta_8$.
\[ \tableau{
{}&{}&{}&{}&{}&{}&{u_7} \\
{}&{}&{}&{}&{}&{u_6} \\
{}&{}&{}&{}&{} \\
{}&{}&{}&{u_4} \\
{}&{}&{u_3} \\
{}&{u_2} \\
{} } \]
\end{example}

\begin{lemma}\label{kappa_natural}
For all $\mu\in\Par$, $\defc(\gamma_{\mu})=|\mu|$.  
\end{lemma}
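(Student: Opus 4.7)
The plan is to compute $\defc(\gamma_\mu)$ directly from the defining Dyck vector in~\eqref{eq:nat-DV}, using the definition $\defc(v) = \binom{n}{2} - \area(v) - \dinv(v)$ with $n = \zeta+1$. Since $\gamma_\mu$ is defined as $\dpmap_{\zeta+1}$ of an explicit vector, and the bijection $\dpmap_n$ preserves $\defc$, it suffices to work entirely with the Dyck vector $v=(0,0,\underline{1}^{e_1},0,\underline{1}^{e_2},\dots,0,\underline{1}^{e_k})$. The happy accident here is that $v$ has entries only in $\{0,1\}$, so every statistic reduces to a pair-counting exercise rather than a Ferrers diagram computation.

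First I would tabulate the combinatorics of $v$: its length is $\zeta+1 = k+\ell(\mu)+1$, it contains exactly $k+1$ zeros (the initial two, plus one preceding each block $\underline{1}^{e_j}$ for $j\geq 2$) and exactly $\ell(\mu)=\sum_j e_j$ ones. The area is immediate: $\area(v) = \ell(\mu)$. For the dinv, I split the contributions by the values of $(v_i,v_j)$ with $i<j$ and $v_i-v_j\in\{0,1\}$, which forces $(v_i,v_j)\in\{(0,0),(1,1),(1,0)\}$. The $(0,0)$ and $(1,1)$ pairs contribute $\binom{k+1}{2}$ and $\binom{\ell(\mu)}{2}$ respectively. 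For $(1,0)$ pairs with $i<j$, every one in the $j$-th block $\underline{1}^{e_j}$ precedes exactly $k-j$ later zeros (one before each subsequent block), giving
\[
\#\{(1,0)\text{ pairs}\} \;=\; \sum_{j=1}^{k} e_j(k-j) \;=\; k\ell(\mu) - |\mu|,
\]
using $|\mu|=\sum_j j e_j$ and $\ell(\mu)=\sum_j e_j$.

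Next I would combine these pieces. The binomial identity
\[
\binom{k+\ell(\mu)+1}{2} \;=\; \binom{k+1}{2} + \binom{\ell(\mu)}{2} + k\ell(\mu) + \ell(\mu)
\]
(an instance of $\binom{a+b+1}{2}-\binom{a+1}{2}-\binom{b}{2}=ab+b$) is exactly tuned to cancel every term in $\area(v)+\dinv(v)$ except $|\mu|$:
\[
\defc(v) \;=\; \binom{k+\ell(\mu)+1}{2} - \ell(\mu) - \binom{k+1}{2} - \binom{\ell(\mu)}{2} - k\ell(\mu) + |\mu| \;=\; |\mu|.
\]
The degenerate case $\mu=(0)$ is handled separately, where $\zeta=0$ and $\gamma_\mu=(0)$ trivially give $\defc=0=|\mu|$.

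There is no serious obstacle; the statement reduces to bookkeeping once one notices that $v$ has only two distinct values. The only place where care is needed is in correctly counting the inversions $(1,0)$ with the one preceding the zero, and in confirming that the block structure indeed yields the sum $k\ell(\mu)-|\mu|$. Once the identity for $\binom{\zeta+1}{2}$ is invoked, the cancellation is automatic and yields $\defc(\gamma_\mu)=|\mu|$.
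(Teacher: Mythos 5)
Your proposal is correct and follows essentially the same route as the paper: pass to the Dyck vector $(0,0,\underline{1}^{e_1},0,\underline{1}^{e_2},\ldots,0,\underline{1}^{e_k})$, compute $\area(v)=\ell(\mu)$ and $\dinv(v)=\binom{\ell(\mu)}{2}+\binom{\mu_1+1}{2}+\mu_1\ell(\mu)-|\mu|$, and cancel against $\binom{\zeta+1}{2}$. Your pair-by-pair bookkeeping of the $(0,0)$, $(1,1)$, $(1,0)$ contributions is exactly the computation in the paper's proof.
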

\begin{proof}
Let $v=\dvmap_{\zeta+1}(\gamma_{\mu})$ be the Dyck vector
shown in~\eqref{eq:nat-DV}. This vector has $e_1+e_2+\cdots+e_k=\ell(\mu)$
ones and $\zeta+1-\ell(\mu)=\mu_1+1$ zeroes. We directly calculate
\[ \dinv(v)=\binom{\ell(\mu)}{2}+\binom{\mu_1+1}{2}
           +\sum_{i=1}^{\mu_1} e_i(\mu_1-i). \]
The last sum is $\mu_1\ell(\mu)-|\mu|$.
Also, $\area(v)=e_1+e_2+\cdots+e_k=\ell(\mu)$, so
\[ \defc(\gamma_{\mu})=\defc(v)=
\binom{\ell(\mu)+\mu_1+1}{2}-\binom{\ell(\mu)}{2}-\binom{\mu_1+1}{2}
 -\mu_1\ell(\mu)-\ell(\mu)+|\mu|=|\mu|. \]
\end{proof} 


We can now state our strengthened version of Conjecture~\ref{conj:DP}.

\begin{conjecture}\label{strong_conj}
For all integers $k\geq 0$ and all $\mu\in\Par(k)$,
there exist two (possibly identical) collections $\C_{\mu}$
and $\overline{\C}_{\mu}$
of Dyck partitions satisfying the following conditions:
\begin{enumerate}
\item[(1)] The sets $\C_{\mu}$ are pairwise disjoint,
 and the sets $\overline{\C}_{\mu}$ are pairwise disjoint.
\item[(2)] $\DP_{\ast,k}=\bigcup_{\mu\in\Par(k)} \C_{\mu}
   =\bigcup_{\mu\in\Par(k)} \overline{\C}_{\mu}$.
\item[(3)] For all $m\geq 0$, $\nu^m(\gamma_{\mu})\in\C_{\mu}$.
\item[(4)] For each $\mu\in\Par(k)$, there exists a unique
$\mu^*\in\Par(k)$ such that $\C_{\mu}=\overline{\C}_{\mu^*}$
and $\C_{\mu^*}=\overline{\C}_{\mu}$ (note $\mu^*$ may not equal 
the conjugate partition $\mu'$).
\item[(5)] For all $\mu\in\Par(k)$ and all $i\geq\ell(\mu^*)$,
there exists a unique partition $C_{\mu,i}\in\C_{\mu}$ with 
$\dinv(C_{\mu,i})=i$.  Moreover, there exists an increasing integer sequence $i_1 < i_2 < \ldots$ such that   
$\C_{\mu}=\bigcup_{j\geq 1} R(C_{\mu,i_j})$ and 
the map $j\mapsto\Delta(C_{\mu,i_j})$ is weakly decreasing
on $\{1,2,\ldots,t\}$ 
and weakly increasing on $\{t,t+1,\ldots\}$ for some $t\geq1$.
\item[(6)] For sufficiently large $n$ (more precisely, for
$n\geq\max(\zeta(\mu)+2,\zeta(\mu^*)+2)$), we have
\[ \ell(\mu^*)\leq i\leq \binom{n}{2}-k-\ell(\mu) \]
if and only if there exists a (unique) $\gamma\in\C_{\mu}$
with $\Delta(\gamma)\leq n$ and $\dinv(\gamma)=i$.
\item[(7)] If $\gamma\in\C_{\mu}$ and $\nu(\gamma)$ is defined,
 then $\nu(\gamma)\in\C_{\mu}$.
\item[(8)] If $\dpmap_n(\dyckmap(\lambda))\in\C_{\mu}$
for some $\lambda\in\Par(n)$, then $\dpmap_n(\dyckmap(\lambda'))
\in\overline{\C}_{\mu}$.
\item[(9)] For all $n\geq 0$, $\C_{\mu}\cap\DP_n$ and 
$\overline{\C}_{\mu}\cap\DP_n$ are $n$-opposite to each other.  
\end{enumerate}
\end{conjecture}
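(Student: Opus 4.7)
The plan is to prove Conjecture~\ref{strong_conj} by induction on $k$, with the base case $k = 0$ handled by Lemma~\ref{lem:C0-opp} (taking $\mu = \mu^* = (0)$ and $\C_{(0)}$ as in Section~\ref{sec:hook_sec}). For each $\mu \in \Par(k)$, the natural seed $\gamma_\mu$ from Definition~\ref{def:natural} has $\defc(\gamma_\mu) = k$ by Lemma~\ref{kappa_natural}, and by Lemma~\ref{lem:propertiesnu} the entire $\nu$-orbit $\{\nu^m(\gamma_\mu) : m \geq 0\}$ lies in $\DP_{\ast,k}$. Property (3) forces this orbit into $\C_\mu$, so I would take it as the primary chain in $\C_\mu$ and then identify the additional chains $R(C_{\mu, i_j})$ needed to cover the rest of $\DP_{\ast,k}$.

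To produce the candidate decomposition, I would iterate the inverse operator $\nu^{-1}$ (read off from~\eqref{eq:nu-on-dv}) on each $\gamma \in \DP_{\ast,k}$ until it becomes undefined, assigning $\gamma$ canonically to a maximal $\nu$-chain seeded by some initial partition $C$. Grouping these chains by the label $\mu$ dictated by property (6) (which forces $\ell(\mu)$ and $\ell(\mu^*)$ to equal the asymptotic minima of $\area_n$ and $\dinv$ on $\C_\mu$) should produce a candidate decomposition $\DP_{\ast,k} = \bigsqcup_{\mu \in \Par(k)} \C_\mu$ that fulfills conditions (1), (2), and (7) essentially by construction, and makes (3) tautological once one verifies that $\gamma_\mu$ is itself the seed of its chain. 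For hook and almost-hook shapes this scheme recovers the explicit $\C_{(k)}$ and $\C_{(u,v)}$ of Sections~\ref{sec:hook_sec} and~\ref{sec:almost_hook_sec}.

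The principal obstacle is identifying the involution $\mu \mapsto \mu^*$ of part (4), which, as the authors caution, need not coincide with partition conjugation; Section~\ref{sec:almost_hook_sec} already exhibits the nontrivial pairing $(ab-b-1,b-1) \leftrightarrow (ab-a-1,a-1)$. I would pin $\mu^*$ down by combining (6) and (8): (6) determines $|\mu^*| = k$ and $\ell(\mu^*)$ from data internal to $\C_\mu$, while (8) applied to every $\lambda$ with $\dpmap_n(\dyckmap(\lambda)) \in \C_\mu$ must consistently send $\dpmap_n(\dyckmap(\lambda'))$ into a single $\overline{\C}_\mu$, forcing $\overline{\C}_\mu = \C_{\mu^*}$. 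Verifying this consistency across all $\lambda$ simultaneously, and checking that the resulting matching is an honest involution on $\Par(k)$, is the delicate point and is the reason the conjecture is not established in general.

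Once the decomposition and the involution are in hand, the symmetry property (9) together with its refinement (5) is verified by the same mechanism used in Theorems~\ref{hook_chain} and~\ref{almost_hook_chain}: one encodes each chain by the sequence of $\Delta$-values of its elements and shows that the string describing $\C_\mu \cap \DP_n$ is the reversal of the one describing $\overline{\C}_\mu \cap \DP_n$, exactly as in patterns~\eqref{general case} and~\eqref{almosthook case a}--\eqref{almosthook case b}. For $k \leq 9$ the set $\Par(k)$ is small enough that the candidate chains can be enumerated explicitly and all nine conditions checked by finite computation; the tables recorded in the Appendix supply the case-by-case data. Extending past $k = 9$ is obstructed by the proliferation of auxiliary chains $R(C_{\mu, i_j})$ with $j \geq 2$, for which no uniform formula generalizing the hook and almost-hook seeds $\gamma_{\lambda,i}$ is presently known.
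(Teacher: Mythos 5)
The statement you were asked about is a conjecture, and you are right to treat the general case as open; the paper's actual result here is Theorem~\ref{mainthm2}, which establishes Conjecture~\ref{strong_conj} only for $k\leq 9$. It is precisely at this point that your proposal has a genuine gap. You assert that for $k\leq 9$ ``the candidate chains can be enumerated explicitly and all nine conditions checked by finite computation,'' but as stated the conditions are not finite: each $\C_{\mu}$ is an infinite set, and conditions (2), (5), (6), (9) quantify over all $\dinv$-values, all $i$, and all $n$. The entire content of the paper's proof is the reduction that makes a finite check legitimate. Concretely: the elements $\nu^m(\gamma_{\mu})$, $\mu\in\Par(k)$, $m\geq 0$, are pairwise distinct (because $\nu$ is injective and no $\gamma_{\mu}$ lies in the image of $\nu$), and comparing their count at each dinv level, via Lemma~\ref{almost_all}(2), with the identity $|\DP_{\ast,k}(d)|=p_{\leq d}(k)$ (which rests on the Loehr--Warrington equidistribution of $\dinv$ with length) shows that every $\gamma\in\DP_{\ast,k}$ with $\dinv(\gamma)\geq\binom{k+2}{2}$ is of the form $\nu^m(\gamma_{\mu})$. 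Hence only finitely many level-$k$ objects are exceptional, all with $\Delta(\gamma)\leq n_k$ for a bound $n_k$ depending only on $k$; and the explicit $\Delta$-values along the infinite chains in Lemma~\ref{almost_all}(3) show that verifying (6) at $n=n_k+1$ propagates to all larger $n$, after which (9) holds for all $n\geq n_k+1$. This is what converts (6) and (9) into the finite conditions (6$'$) and (9$'$) that the exhaustive computation actually verifies. Your proposal never supplies this reduction, so the appeal to ``finite computation'' does not by itself prove anything about infinitely many $n$ or about the infinite tails of the chains.

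Two lesser points. First, the induction on $k$ you announce plays no role in your argument and plays none in the paper's; the cases of different $k$ are independent. Second, your proposed canonical construction (maximal $\nu^{-1}$-chains, with $\mu^*$ pinned down by consistency of (6) and (8)) is speculative and is not how the paper proceeds: for $k\leq 9$ the collections $\C_{\mu}$, $\overline{\C}_{\mu}$ are simply exhibited (Appendix), and it is not automatic that the finite blocks $R(C_{\mu,i_j})$ required in (5) coincide with maximal $\nu$-orbits. If you pursue your construction, note also that the paper shows (2) is a consequence of (1), (4), (5) by the same counting lemma, which would spare you proving (2) directly.
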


Evidently, Conjecture~\ref{strong_conj} implies Conjecture~\ref{conj:DP}
(take $\DP_{\mu}=\C_{\mu}$ and $\overline{\DP}_{\mu}=\overline{\C}_{\mu}$).
Our next goal is to show that condition (2) in 
Conjecture~\ref{strong_conj} is already implied by the other conditions.
Write $p_{\leq d}(k)$ for the number of partitions $\lambda$ with
$|\lambda|=k$ and $\ell(\lambda)\leq d$.  Define 
$$\DP_{\ast,k}(d) =\{\gamma\in \DP_{\ast,k}:\dinv(\gamma)=d\}
 =\{\gamma\in\Par:\defc(\gamma)=k\mbox{ and }\dinv(\gamma)=d\}.$$
For any subset $\C\subseteq\DP_{\ast,k}$, define $\C(d)=\C\cap \DP_{\ast,k}(d)$.

\begin{lemma}
For all $d,k\geq 0$, $|\DP_{\ast,k}(d)|=p_{\leq d}(k)$. 
\end{lemma}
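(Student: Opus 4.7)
I plan to reformulate the claim as the generating-function identity
\[
\sum_{\gamma\in\Par} q^{\dinv(\gamma)}\,x^{|\gamma|}\;=\;\prod_{i\ge 1}\frac{1}{1-qx^i}.
\]
To see this is equivalent to the lemma: the coefficient of $q^d x^{d+k}$ on the left is $|\DP_{\ast,k}(d)|$, because $\defc(\gamma)=|\gamma|-\dinv(\gamma)$; and on the right, expanding $\prod_{i\ge 1}(1-qx^i)^{-1}=\sum_{\nu\in\Par}q^{\ell(\nu)}x^{|\nu|}$ shows the coefficient is the number of $\nu\in\Par(d+k)$ with $\ell(\nu)=d$, and these are in bijection with $\{\mu\in\Par(k):\ell(\mu)\le d\}$ via $\mu_j=\nu_j-1$. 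Equivalently the lemma asserts that $\dinv$ and $\ell$ are equidistributed on each $\Par(n)$; one also checks directly that
\[
\sum_{k,d\ge 0}p_{\leq d}(k)\,q^d x^{d+k}\;=\;\sum_{d\ge 0}\frac{(qx)^d}{\prod_{i=1}^d(1-x^i)}\;=\;\prod_{i\ge 1}\frac{1}{1-qx^i}
\]
by the standard Durfee-type manipulation, so the reformulation is indeed equivalent.

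To establish the identity, I would decompose $\Par$ into $\nu$-orbits. By Lemma~\ref{lem:propertiesnu}, $\nu$ preserves $\defc$ and increments both $\dinv$ and $|\gamma|$ by $1$, so a single orbit $\{\nu^m(\gamma_0):0\le m\le M\}$ contributes
\[
q^{\dinv(\gamma_0)}\,x^{|\gamma_0|}\sum_{m=0}^{M}(qx)^m
\]
to the left-hand side. Using the Dyck-vector formula~\eqref{eq:nu-on-dv}, I would characterise the chain minima $\gamma_0$ (those $\gamma$ on which $\nu^{-1}$ fails, readable from the minimal-$n$ Dyck vector of $\gamma$) and the chain length $M$ (finite exactly when iterating $\nu$ eventually produces a partition violating $\gamma_1\le\ell(\gamma)+2$), then sum the contributions and match them against $\prod_{i\ge 1}(1-qx^i)^{-1}$.

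\textbf{Main obstacle.} The hardest step will be bookkeeping for terminating chains: many $\nu$-orbits are finite (e.g.\ $(1^n)\mapsto(n)$ stops after one step because $\nu((n))$ is undefined for $n\ge 2$), and these contribute only partial geometric sums $\sum_{m=0}^M (qx)^m$ rather than $\frac{1}{1-qx}$. Showing that all the partial and full geometric contributions interlock in a telescoping fashion so as to reproduce the infinite product $\prod_{i\ge 1}(1-qx^i)^{-1}$ is the main technical hurdle. An alternative, possibly cleaner, route is to construct an explicit bijection $\Phi:\Par\to\Par$ with $|\Phi(\gamma)|=|\gamma|$ and $\ell(\Phi(\gamma))=\dinv(\gamma)$, perhaps by a Maya-diagram / abacus encoding of $\gamma$ or by a recursive peeling against the near-staircase chain $\C_{(0)}$; the challenge there is exhibiting $\Phi$ combinatorially and verifying both statistic identities in tandem.
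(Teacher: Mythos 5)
Your reduction is sound and matches the paper's: writing $\defc(\gamma)=|\gamma|-\dinv(\gamma)$ and comparing coefficients, the lemma is equivalent to the identity $\sum_{\gamma\in\Par} q^{\dinv(\gamma)}x^{|\gamma|}=\prod_{i\geq 1}(1-qx^i)^{-1}$, i.e.\ to the equidistribution of $\dinv$ and $\ell$ on each $\Par(n)$; your Durfee-type manipulation identifying $\sum_{k,d}p_{\leq d}(k)q^dx^{d+k}$ with the same product is also correct. The problem is that this equidistribution \emph{is} the entire content of the lemma, and your proposal does not prove it. The paper disposes of it by citing Loehr--Warrington \cite[Theorem 3]{LW}, which establishes exactly $\sum_{\lambda\in\Par}q^{|\lambda|}t^{\ell(\lambda)}=\prod_{i\geq 1}(1-tq^i)^{-1}=\sum_{\lambda\in\Par}q^{|\lambda|}t^{\dinv(\lambda)}$; the substitution $t\mapsto t/q$ then gives the statement immediately. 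That result is not a routine observation --- it is the main theorem of a separate paper --- so it cannot be left as a sketch.

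Your proposed route via $\nu$-orbits is precisely where the gap lies, and you name it yourself: the chains $\{\nu^m(\gamma_0)\}$ are frequently finite (e.g.\ $(1^n)\mapsto(n)$ terminates), so each orbit contributes a truncated geometric sum, and you give no argument that these truncated contributions assemble into $\prod_{i\geq 1}(1-qx^i)^{-1}$. There is no telescoping mechanism exhibited, no characterisation of chain minima and lengths carried out, and the alternative bijection $\Phi$ with $\ell(\Phi(\gamma))=\dinv(\gamma)$ is likewise only wished for, not constructed (constructing such a bijection is essentially the content of the Loehr--Warrington work). As written, the proof stops exactly at the step that requires the real idea. The gap can be closed either by invoking \cite[Theorem 3]{LW} as the paper does, or by actually carrying out one of your two sketched programs, but the latter would be a substantial piece of work in its own right.
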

\begin{proof}
In~\cite[Theorem 3]{LW}, Loehr and Warrington showed that the two
statistics $\dinv$ and $\ell$ (number of parts) have the same
distribution on $\Par(n)$ for all $n$; more specifically,
\[ \sum_{\lambda\in\Par} q^{|\lambda|}t^{\ell(\lambda)}
 =\prod_{i=1}^{\infty} \frac{1}{1-tq^i}
 =\sum_{\lambda\in\Par} q^{|\lambda|}t^{\dinv(\lambda)}. \]
Replacing $t$ by $t/q$ in this identity, we get
\[ \sum_{\lambda\in\Par} q^{|\lambda|-\ell(\lambda)}t^{\ell(\lambda)}
  =\sum_{\lambda\in\Par} q^{\defc(\lambda)}t^{\dinv(\lambda)}. \]
The coefficient of $q^kt^d$ on the right side is $|\DP_{\ast,k}(d)|$.
The coefficient of $q^kt^d$ on the left side is the number of 
partitions of $k+d$ into exactly $d$ nonzero parts. By decreasing
each of these $d$ parts by $1$, we see that this is the number
of partitions of $k$ into at most $d$ nonzero parts, namely $p_{\leq d}(k)$.
\end{proof}

\begin{lemma}
In Conjecture~\ref{strong_conj}, part {\rm(2)} follows from 
parts {\rm(1)}, {\rm(4)}, and {\rm(5)}.
\end{lemma}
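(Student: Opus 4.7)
The plan is to combine the preceding lemma's identity $|\DP_{\ast,k}(d)|=p_{\leq d}(k)$ with a double-counting argument pairing $|\C_\mu(d)|$ against $|\{\mu\in\Par(k):\ell(\mu^*)\leq d\}|$, where $\C_\mu(d)=\C_\mu\cap\DP_{\ast,k}(d)$.

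First I would verify that the map $\mu\mapsto\mu^*$ is an involution on $\Par(k)$. Applying (4) with $\mu$ replaced by $\mu^*$ yields $\C_{\mu^*}=\overline{\C}_{(\mu^*)^*}$, while (4) itself asserts $\C_{\mu^*}=\overline{\C}_{\mu}$. Pairwise disjointness of the $\overline{\C}$-sets from (1), together with the nonemptiness (indeed infiniteness) of $\overline{\C}_\mu=\C_{\mu^*}$ guaranteed by (5), then forces $(\mu^*)^*=\mu$.

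Next I would run the counting. Because the conjecture is set up so that $\C_\mu\subseteq\DP_{\ast,k}$, the disjointness in (1) gives the upper bound
\[
\sum_{\mu\in\Par(k)}|\C_\mu(d)| \;\leq\; |\DP_{\ast,k}(d)| \;=\; p_{\leq d}(k).
\]
On the other hand, the existence clause of (5) produces $|\C_\mu(d)|\geq 1$ whenever $d\geq\ell(\mu^*)$, so reindexing through the involution of the previous paragraph gives the matching lower bound
\[
\sum_{\mu\in\Par(k)}|\C_\mu(d)| \;\geq\; |\{\mu\in\Par(k):\ell(\mu^*)\leq d\}| \;=\; |\{\nu\in\Par(k):\ell(\nu)\leq d\}| \;=\; p_{\leq d}(k).
\]
Sandwiching forces equality for every $d$, hence $\bigsqcup_\mu\C_\mu(d)=\DP_{\ast,k}(d)$; taking the union over $d$ establishes the first identity in part (2). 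The second identity then drops out of (4), since the family $\{\overline{\C}_\mu:\mu\in\Par(k)\}$ equals $\{\C_{\mu^*}:\mu\in\Par(k)\}$, which is the same family as $\{\C_\nu:\nu\in\Par(k)\}$ because $\mu\mapsto\mu^*$ is a bijection on $\Par(k)$.

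The main point requiring care---indeed the only real friction in the argument---is justifying the containment $\C_\mu\subseteq\DP_{\ast,k}$, which the wording of Conjecture~\ref{strong_conj} treats as implicit in calling $\{\C_\mu:\mu\in\Par(k)\}$ a decomposition of the level-$k$ objects. If one wishes to extract this from the listed hypotheses alone, the ``moreover'' clause in (5) expresses each element of $\C_\mu$ as $\nu^m(C_{\mu,i_j})$, and Lemma~\ref{lem:propertiesnu} then reduces matters to pinning down the common deficit $\defc(C_{\mu,i_j})$ as $k$, which one extracts from the uniqueness statement in (5) by comparing $\dinv$ values of partitions across all $\mu\in\Par(k)$ simultaneously.
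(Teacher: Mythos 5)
Your argument is correct and follows essentially the same route as the paper's proof: use the identity $|\DP_{\ast,k}(d)|=p_{\leq d}(k)$, disjointness from (1), the bijection $\mu\mapsto\mu^*$ from (4), and existence/uniqueness from (5) to sandwich $\sum_{\mu}|\C_{\mu}(d)|$ against $|\DP_{\ast,k}(d)|$ for each $d$, then conclude both union statements. You are in fact more explicit than the paper, which silently assumes both the involutive/bijective nature of $\mu\mapsto\mu^*$ and the containment $\C_{\mu}\subseteq\DP_{\ast,k}$ that you flag as implicit in the conjecture's framing.
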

\begin{proof}
Parts (1), (4), and (5) of Conjecture~\ref{strong_conj}
imply  that $\bigcup_{\mu\in\Par(k)} \C_{\mu}
=\bigcup_{\mu\in\Par(k)} \overline{\C}_{\mu}$
and that $\C=\bigcup_{\mu\in\Par(k)} \C_{\mu}$
satisfies $|\C(d)|=p_{\leq d}(k)=|\DP_{\ast,k}(d)|$.  
Thus, $\DP_{\ast,k}$ must be the union of the sets $\C_{\mu}$ (and the union 
of the sets $\overline{\C}_{\mu}$) as $\mu$ varies through $\Par(k)$.
\end{proof}

Conjecture~\ref{strong_conj}(5) now implies that each collection
$\C_{\mu}$ has the form $\{C_{\mu,i}:i\geq \ell(\mu^*)\}$ 
with $\dinv(C_{\mu,i})=i$.

\begin{lemma}\label{almost_all}
For all $\mu\in\Par$ and $m\in\Z_{\geq 0}$,
\begin{enumerate}
\item  $\nu^m(\gamma_{\mu})$ is defined;
\item $\dinv(\nu^m(\gamma_{\mu}))=\binom{\mu_1+\ell(\mu)+1}{2}
 -|\mu|-\ell(\mu)+m$; 
\item $(\Delta(\nu^1(\gamma_{\mu})), \Delta(\nu^2(\gamma_{\mu})), \ldots)
 =(\underline{(\zeta+2)}^{\zeta+1}, \underline{(\zeta+3)}^{\zeta+2},\ldots)$, where $\zeta=\zeta(\mu)$.
\end{enumerate}
\end{lemma}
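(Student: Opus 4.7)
The plan is to prove the three parts in order, leveraging the explicit Dyck-vector description~\eqref{eq:nat-DV} of $\gamma_\mu$ together with Lemma~\ref{lem:propertiesnu} and the Dyck-vector description~\eqref{eq:nu-on-dv} of the $\nu$ operator.

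For part (1), I would first read off from~\eqref{eq:nat-DV} that $(\gamma_\mu)_1 = \zeta - 1$ and $\ell(\gamma_\mu) = \zeta$ (when $\mu\neq(0)$), so the defining condition $\gamma_1 \leq \ell(\gamma) + 2$ holds at the base. To propagate it along the orbit, the key calculation is that for any partition $\alpha$ on which $\nu$ is defined, $\nu(\alpha)_1 = \ell(\alpha)+1$ and $\ell(\nu(\alpha)) = 1 + |\{i : \alpha_i \geq 2\}| = \ell(\alpha) + 1 - |\{i : \alpha_i = 1\}|$, so $\nu(\alpha)_1 \leq \ell(\nu(\alpha)) + 2$ is equivalent to $|\{i : \alpha_i = 1\}| \leq 2$. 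A short inductive argument then shows that each $\nu^m(\gamma_\mu)$ has at most two parts equal to $1$, using the recursion
\[
\mathrm{mult}_j(\nu(\alpha)) = \mathrm{mult}_{j+1}(\alpha) + [j = \ell(\alpha)+1]
\]
together with the fact that $\gamma_\mu$---obtained from $\Delta_{\zeta+1}$ by deleting cells lying on the single anti-diagonal $i+j=\zeta+1$---has every part value of multiplicity at most two. For part (2), Lemma~\ref{kappa_natural} gives $\defc(\gamma_\mu) = |\mu|$, and the removal description in Definition~\ref{def:natural} gives $|\gamma_\mu| = \binom{\zeta+1}{2} - \ell(\mu)$, whence $\dinv(\gamma_\mu) = |\gamma_\mu| - \defc(\gamma_\mu) = \binom{\zeta+1}{2} - \ell(\mu) - |\mu|$; for $m \geq 1$, iterated application of Lemma~\ref{lem:propertiesnu}(1) gives the asserted formula.

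For part (3), the starting point is the identity
\[
\Delta(\nu(\alpha)) = \max\bigl(\Delta(\alpha),\ \ell(\alpha)+2\bigr),
\]
which follows from $\Delta(\alpha) = \max_i(\alpha_i+i)$ (valid for nonempty $\alpha$) together with $\nu(\alpha)_1 + 1 = \ell(\alpha)+2$ and $\nu(\alpha)_i + i = \alpha_{i-1} + (i-1)$ for $i \geq 2$. Iterating gives
\[
\Delta(\nu^m(\gamma_\mu)) = \max\Bigl(\zeta+1,\ 2 + \max_{0 \leq j < m} \ell(\nu^j(\gamma_\mu))\Bigr)
\]
for $m \geq 1$, so it suffices to show that the running maximum of $\ell(\nu^j(\gamma_\mu))$ first attains the value $\zeta + i$ precisely at step $j = \sum_{s=1}^{i}(\zeta+s)$, for each $i \geq 1$. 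I would establish this by computing $\nu^m(\gamma_\mu)$ via iterated application of~\eqref{eq:nu-on-dv} to the Dyck vector of $\gamma_\mu$ inside $\DV_N$ for $N$ sufficiently large, starting from the embedding $\iota^{N-\zeta-1}$ applied to~\eqref{eq:nat-DV}. The alternating ``$0$-separator / $1$-block'' structure of that vector produces a controlled evolution: each full cycle of $\zeta+i$ consecutive $\nu$-steps corresponds to one traversal of the block sequence and increments the running maximum of $\ell$ by one, yielding exactly the claimed pattern.

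The main obstacle lies in part (3). Because $\ell$ is neither monotone nor simply periodic under $\nu$---it can drop by one when two $1$-parts are consumed and climb by one when a ${\geq}\,2$ part is created at the tail---the analysis requires careful bookkeeping of the multiplicities of the small part values, or equivalently of the positions of the $0$- and $1$-entries of the Dyck vector as they migrate under~\eqref{eq:nu-on-dv}. A cleaner route is to invoke the general framework of~\cite[Lemma 9]{LLL}, of which Lemma~\ref{lem:propertiesnu} is cited here as a special case.
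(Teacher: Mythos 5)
Your part (2) is fine, and your identity $\Delta(\nu(\alpha))=\max(\Delta(\alpha),\ell(\alpha)+2)$ is correct and a sensible reduction. But the proposal does not actually prove the lemma: the two genuinely hard claims are asserted rather than established. For part (3), the entire content of the statement is precisely your claim that the running maximum of $\ell(\nu^j(\gamma_\mu))$ first reaches $\zeta+i$ at step $\sum_{s=1}^i(\zeta+s)$, i.e.\ that ``each full cycle of $\zeta+i$ consecutive $\nu$-steps increments the running maximum of $\ell$ by one.'' You state this as the outcome of a ``controlled evolution'' of the Dyck vector but give no verification, and you concede the bookkeeping is the main obstacle. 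The fallback you suggest, \cite[Lemma 9]{LLL}, cannot close this gap: it only governs how $\area$, $\dinv$, and $\defc$ change under the operator (that is the content of Lemma~\ref{lem:propertiesnu}), and says nothing about $\Delta(\nu^m(\gamma_\mu))$ or about $\nu^m$ being defined. The paper's proof supplies exactly the missing ingredient in closed form: it identifies the whole orbit as
$\{\nu^m(\gamma_{\mu}) : m\geq 0\}=\bigcup_{i\geq \zeta(\mu)} R\bigl(\I_{0,i}\setminus\{u_{\mu_1+\ell(\mu)},u_{\mu_2+\ell(\mu)-1},\ldots\}\bigr)$,
i.e.\ as a concatenation of the chains $R(\cdot)$ built from successively larger staircases with the same fixed cells removed; once this description is in hand, (1), (2), (3) are read off directly (definedness, the dinv count, and the block lengths $\zeta+1,\zeta+2,\ldots$ of constant $\Delta$). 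Your outline never produces this or any equivalent explicit description, so part (3) remains unproven.

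There is also a gap in your part (1). The equivalence ``$\nu$ is defined on $\nu(\alpha)$ iff $\alpha$ has at most two parts equal to $1$'' and the recursion $\mathrm{mult}_j(\nu(\alpha))=\mathrm{mult}_{j+1}(\alpha)+[j=\ell(\alpha)+1]$ are both correct, but the invariant you propose to induct on --- every part value of $\nu^m(\gamma_\mu)$ has multiplicity at most two --- is not preserved by $\nu$ in general: for example $\nu((4,4))=(3,3,3)$, because the correction term can push $\mathrm{mult}_{\ell(\alpha)+1}$ to $3$ when $\alpha_1=\alpha_2=\ell(\alpha)+2$. So the ``short inductive argument'' as stated breaks; to make it work you would need a finer invariant ruling out that configuration along the orbit, which again amounts to knowing the explicit structure of $\nu^m(\gamma_\mu)$ --- the very description the paper's proof is built on.
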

\begin{proof}
One can explicitly describe $\nu^m(\gamma_{\mu})$, namely 
$$\{\nu^m(\gamma_{\mu}) : m\in\Z_{\geq 0}\}
=\bigcup_{i\geq \zeta(\mu)} 
R(\I_{0,i}\setminus\{u_{\mu_1+\ell(\mu)},u_{\mu_2+\ell(\mu)-1},\ldots\}),$$
from which all the statements readily follow.
\end{proof}

\begin{theorem}\label{mainthm2}
Conjecture~\ref{strong_conj} holds for all $k\leq 9$.
\end{theorem}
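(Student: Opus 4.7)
The plan is to prove Theorem~\ref{mainthm2} by exhibiting, for each $k\in\{0,1,\ldots,9\}$ and each $\mu\in\Par(k)$, an explicit pair $(\C_\mu,\overline{\C}_\mu)$ satisfying all nine conditions of Conjecture~\ref{strong_conj}. For each $\mu$, I would take $\C_\mu$ to be the union of the infinite ``tail'' chain $\{\nu^m(\gamma_\mu):m\geq 0\}$ together with finitely many ``correction'' chains $R(\gamma_{\mu,j})$, where the seed partitions $\gamma_{\mu,j}$ and the involution $\mu\mapsto\mu^*$ (giving $\overline{\C}_\mu=\C_{\mu^*}$) are tabulated in the Appendix.

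The first step is to dispose of the families already treated. For $\mu=(k)$ I would take $\C_\mu=\overline{\C}_\mu=\C_{(k)}$ from Section~\ref{sec:hook_sec} with $\mu^*=\mu$, in which case Theorem~\ref{hook_chain} gives condition (9) and Lemma~\ref{Delta_II} together with Example~\ref{ex:Ikl} yields the remaining conditions. For $\mu=(ab-b-1,b-1)$ with $a,b\geq 2$, I would pair $\mu$ with $\mu^*=(ab-a-1,a-1)$ and use $\C_\mu=\C_{(ab-b-1,b-1)}$ from Section~\ref{sec:almost_hook_sec}, so that condition (9) is furnished by Theorem~\ref{almost_hook_chain} and the other conditions follow from Lemma~\ref{ahk} together with the Dyck vector formulas in its proof.

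The second step is to construct $(\C_\mu,\overline{\C}_\mu)$ for the remaining finite list of partitions $\mu$ with $|\mu|\leq 9$. For each such $\mu$, the seeds $\gamma_{\mu,1},\ldots,\gamma_{\mu,N_\mu}$ and the involution $\mu^*$ are read off from the Appendix. Conditions (3) and (7) are immediate from the definition of $R(\gamma)$. Conditions (1), (2), (5), and (6) are then a finite bookkeeping check: using Lemma~\ref{almost_all} to describe the $\dinv$ and $\Delta$ values along the tail chain through $\gamma_\mu$, and Lemmas~\ref{lem:propertiesnu} and~\ref{Delta_II} to describe each correction chain $R(\gamma_{\mu,j})$, one verifies that for every $d$ the sets $\C_\mu(d)$ partition $\DP_{\ast,k}(d)$, the total cardinality matching the count $p_{\leq d}(k)$ established in the lemma just after Conjecture~\ref{strong_conj}. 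Condition (8) is verified by computing $\dpmap_n(\dyckmap(\lambda))$ for each $\lambda$ with $|\lambda|\leq 9$ and confirming that $\lambda\leftrightarrow\lambda'$ induces the tabulated involution $\mu\leftrightarrow\mu^*$.

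The main obstacle is condition (9) for the non-(almost-)hook shapes. My plan is to split into two regimes. For $n\geq n_0(\mu):=\max(\zeta(\mu),\zeta(\mu^*))+2$, condition (6) forces the $\dinv$-values in $\C_\mu\cap\DP_n$ and $\C_{\mu^*}\cap\DP_n$ to fill out the intervals $[\ell(\mu^*),\binom{n}{2}-k-\ell(\mu)]$ and $[\ell(\mu),\binom{n}{2}-k-\ell(\mu^*)]$ respectively; since the involution $i\mapsto\binom{n}{2}-k-i$ exchanges these intervals and all multiplicities are one, the pair is automatically $n$-opposite. For $n<n_0(\mu)$ only finitely many pairs $(\mu,n)$ arise, and each can be handled by direct inspection of the coefficient matrix of $\Cat_n(q,t)$ using the tabulated seeds. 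This case-by-case verification over the finite set of remaining pairs constitutes the bulk of the work, and it is exactly what the Appendix is designed to make tractable.
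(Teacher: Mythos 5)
Your proposal follows essentially the same route as the paper: reduce the infinitely many conditions to a finite verification by observing (via the counting lemma $|\DP_{\ast,k}(d)|=p_{\leq d}(k)$ and Lemma~\ref{almost_all}) that all level-$k$ objects with large dinv lie on the tail chains $\nu^m(\gamma_\mu)$, deduce condition (9) for large $n$ from condition (6) and the reflection $i\mapsto\binom{n}{2}-k-i$, and dispose of the remaining finitely many cases by exhaustive computation with the Appendix data (with the hook and almost-hook families covered by Theorems~\ref{hook_chain} and~\ref{almost_hook_chain}). This matches the paper's proof, which likewise reduces (6) and (9) to the bounded conditions (6$'$) and (9$'$) for $n\leq n_k+1$ and then appeals to a finite computer check.
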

\begin{proof}
We first note that   $\nu^{m}(\gamma_\mu)$ are all distinct as we let $\mu$ and $m$ vary, because the operator $\nu$ is one-to-one and $\nu^{-1}(\gamma_\mu)$ are not defined. Since $|\DP_{\ast,k}(d)|=p_{\leq d}(k)$,   we have 
$$\DP_{\ast,k}(d)=\left\{\nu^m(\gamma_{\mu}) \ : \mu\in\Par(k)\mbox{ and }
 m=d-\binom{\mu_1+\ell(\mu)+1}{2}+k+\ell(\mu)\right\}$$ for each $d\geq \binom{k+2}{2}$.
 
In particular, any partition $\gamma$ with 
$\defc(\gamma)=k$ and $\dinv(\gamma)=d$ has the form
$\nu^m(\gamma_{\mu})$ for some $\mu$ and $m$.
Hence, for fixed $k$, there are only finitely many objects 
in $\DP_{\ast,k}$ that are not of the form $\nu^m(\gamma_{\mu})$. 
So if $\gamma$ is not of the form $\nu^m(\gamma_{\mu})$, then 
$\Delta(\gamma)\leq n_k$ for some constant $n_k$ depending only on $k$.  
One readily sees that $n_k$ can be any integer greater than $k+1$.

So if Conjecture~\ref{strong_conj}(6) holds for $n=n_k+1$, then 
it also holds for $n>n_k+1$ by Lemma~\ref{almost_all}. In turn, 
Conjecture~\ref{strong_conj}(9) holds for all $n\geq n_k+1$. 
Thus, to prove Conjecture~\ref{strong_conj} for a given $k$, 
it suffices to prove the conjecture with conditions (6) and (9) replaced by
the following conditions.
\begin{enumerate}
\item[(6$'$)] 
For $\max(\zeta(\mu)+1,\zeta(\mu^*)+1)\leq n\leq n_k+1$,
$$\ell(\mu^*)\leq i\leq \binom{n}{2}-k-\ell(\mu)$$
if and only if  there exists a (unique) $\gamma\in\C_{\mu}$ with
$\Delta(\gamma)\leq n$ and $\dinv(\gamma)=i$.
\item[(9$'$)]
For all $n\leq n_k$, $\C_{\mu}\cap\DP_n$ and $\overline{\C}_{\mu}\cap\DP_n$
are $n$-opposite to each other.
\end{enumerate}

Therefore, for fixed $k$, Conjecture~\ref{strong_conj} can be checked in 
finitely many steps by examining finitely many objects. By exhaustive 
computations, we have checked Conjecture~\ref{strong_conj} for $k\leq 9$. 
The list of $\C_{\mu}$ and $\overline{\C}_{\mu}$
for $k\leq 6$ is given in the appendix below,
along with a link to the list of $\C_{\mu}$ and $\overline{\C}_{\mu}$
for $k\leq 9$.  
\end{proof}

In fact, our computations show that for all $k\leq 4$,
the collections $\C_{\mu}$ and $\overline{\C}_{\mu}$
with $\mu\in\Par(k)$ are uniquely
determined by the conditions in Conjecture~\ref{strong_conj}.

\section{Appendix}\label{app}
Below is the list of all $\C_{\mu}$ and $\overline{\C}_{\mu}$
with $k=|\mu|\le 6$.  The list of $\C_{\mu}$ and $\overline{\C}_{\mu}$
for $k\leq 9$ may be found on the website
\verb-www.oakland.edu/~li2345/List_of_C_mu.pdf-.

{\tiny

$k=0$:
\begin{enumerate}
\item  $\mathcal{C}_{(0)}=\{\nu^{(m)}(\gamma_{(0)}) :  m\in[0,\infty) \}=\{\nu^{(m)}((0)) :  m\in[0,\infty) \}=\overline{\mathcal{C}}_{(0)}$
\end{enumerate}

$k=1$:
\begin{enumerate}
\item  $\mathcal{C}_{(1)}=\{\nu^{(m)}(\gamma_{(1)}) :  m\in[0,\infty) \}=\{\nu^{(m)}((1,1)) :  m\in[0,\infty) \}=\overline{\mathcal{C}}_{(1)}$
\end{enumerate}

$k=2$:
\begin{enumerate}
 \setlength\itemsep{.5em}
\item  $\mathcal{C}_{(2)}=R((1,1,1)) \cup \{\nu^{(m)}(\gamma_{(2)}) :  m\in[0,\infty) \}
 =\{(1,1,1),(4)\}\cup\{\nu^{(m)}((2,2,1)) :  m\in[0,\infty) \}
=\overline{\mathcal{C}}_{(2)}$ 

\item  $\mathcal{C}_{(1,1)}=\{\nu^{(m)}(\gamma_{(1,1)}) :  m\in[0,\infty) \}=\{\nu^{(m)}((2,1,1)) :  m\in[0,\infty) \}=\overline{\mathcal{C}}_{(1,1)}$

\end{enumerate}

$k=3$:
\begin{enumerate}
 \setlength\itemsep{.5em}
\item  $\mathcal{C}_{(3)}=R((1,1,1,1))\cup R((2,2,2)) \cup \{\nu^{(m)}(\gamma_{(3)}) :  m\in[0,\infty) \}$
 
 ${\;}=\{(1,1,1,1),(5)\}\cup\{(2,2,2),(4,1,1,1),(5,3)\}\cup\{\nu^{(m)}((3,3,2,1)) :  m\in[0,\infty) \}
 =\overline{\mathcal{C}}_{(3)}$ 

\item  $\mathcal{C}_{(2,1)}=R((3,1,1,1))\cup \{\nu^{(m)}(\gamma_{(2,1)}) :  m\in[0,\infty) \}
=\{(3,1,1,1),(5,2)\}\cup \{\nu^{(m)}((3,3,1,1)) :  m\in[0,\infty) \}
=\overline{\mathcal{C}}_{(1,1,1)}$ 

\item  $\mathcal{C}_{(1,1,1)}=R((2,1,1,1))\cup \{\nu^{(m)}(\gamma_{(1,1,1)}) :  m\in[0,\infty) \}$

${\;\;\;\;\;\;}=\{(2,1,1,1),(5,1)\}\cup \{\nu^{(m)}((3,2,1,1)) :  m\in[0,\infty) \}
=\overline{\mathcal{C}}_{(2,1)}$ 
\end{enumerate}

$k=4$:
\begin{enumerate}
 \setlength\itemsep{.5em}
\item  $\mathcal{C}_{(4)}=R((1,1,1,1,1))\cup R((2,2,2,1)) \cup R((3,3,3,1))\cup \{\nu^{(m)}(\gamma_{(4)}) :  m\in[0,\infty) \}$

$ {\;}=\{(1,1,1,1,1),(6)\} \cup\{(2,2,2,1),(5,1,1,1),(5,4)\}
\cup\{(3,3,3,1),(5,2,2,2),(5,4,1,1,1),(6,4,3)\}$

${\;\;\;\;\;}\cup\{\nu^{(m)}((4,4,3,2,1)) :  m\in[0,\infty) \}\
 =\overline{\mathcal{C}}_{(4)}$ 

\item  $\mathcal{C}_{(3,1)}=R((2,2,1,1))\cup R((3,3,3))\cup \{\nu^{(m)}(\gamma_{(3,1)}) :  m\in[0,\infty) \}$

${\;\;\;\,}=R((2,2,1,1))\cup R((3,3,3))\cup  \{\nu^{(m)}((4,4,3,1,1)) :  m\in[0,\infty) \}
=\overline{\mathcal{C}}_{(2,2)}$ 

\item  $\mathcal{C}_{(2,2)}=R((2,1,1,1,1))\cup \{\nu^{(m)}(\gamma_{(2,2)}) :  m\in[0,\infty) \}=R((2,1,1,1,1))\cup  \{\nu^{(m)}((3,2,2,1)) :  m\in[0,\infty) \}
=\overline{\mathcal{C}}_{(3,1)}$ 

\item  $\mathcal{C}_{(2,1,1)}=R((3,2,1,1,1))\cup R((4,3,1,1,1))\cup \{\nu^{(m)}(\gamma_{(2,1,1)}) :  m\in[0,\infty) \}$

${\;\;\;\;\;\;}=R((3,2,1,1,1))\cup R((4,3,1,1,1))\cup  \{\nu^{(m)}((4,4,2,1,1)) :  m\in[0,\infty) \}
=\overline{\mathcal{C}}_{(1,1,1,1)}$

\item  $\mathcal{C}_{(1,1,1,1)}=R((3,1,1,1,1))\cup R((4,2,1,1,1))\cup \{\nu^{(m)}(\gamma_{(1,1,1,1)}) :  m\in[0,\infty) \}$

${\;\;\;\;\;\;\;\;\,}=R((3,1,1,1,1))\cup R((4,2,1,1,1))\cup  \{\nu^{(m)}((4,3,2,1,1)) :  m\in[0,\infty) \}
=\overline{\mathcal{C}}_{(2,1,1)}$
\end{enumerate}

$k=5$:
\begin{enumerate}
 \setlength\itemsep{.5em}
\item  $\mathcal{C}_{(5)}=\overline{\mathcal{C}}_{(5)}$ is given in \eqref{def:hook}.

\item  $\mathcal{C}_{(4,1)}=R((4,1,1,1,1))\cup R((3,3,2,2))\cup R((4,4,4,2))\cup \{\nu^{(m)}(\gamma_{(4,1)}) :  m\in[0,\infty) \}
=\overline{\mathcal{C}}_{(2,2,1)}$ 

\item  $\mathcal{C}_{(3,2)}=R((2,2,1,1,1))\cup R((3,3,1,1,1))\cup R((4,4,1,1,1)) \cup \{\nu^{(m)}(\gamma_{(3,2)}) :  m\in[0,\infty) \}  
= \overline{\mathcal{C}}_{(3,2)}$ 

\item  $\mathcal{C}_{(2,2,1)}=R((2,1,1,1,1,1))\cup R((3,2,2,2))\cup \{\nu^{(m)}(\gamma_{(2,2,1)}) :  m\in[0,\infty) \}
=\overline{\mathcal{C}}_{(4,1)}$ 

\item  $\mathcal{C}_{(3,1,1)}=R((3,1,1,1,1,1))\cup R((4,2,2,1,1))\cup  R((5,4,2,2,2))\cup \{\nu^{(m)}(\gamma_{(3,1,1)}) :  m\in[0,\infty) \}
=\overline{\mathcal{C}}_{(3,1,1)}$

\item  $\mathcal{C}_{(2,1,1,1)}=R((3,2,1,1,1,1))\cup R((4,3,1,1,1,1))\cup R((5,3,2,1,1,1))\cup R((5, 4, 3, 1, 1, 1))\cup \{\nu^{(m)}(\gamma_{(2,1,1,1)}) :  m\in[0,\infty) \}
=\overline{\mathcal{C}}_{(2,1,1,1)}$

\item  $\mathcal{C}_{(1,1,1,1,1)}=R((4,2,1,1,1,1))\cup R((4,3,2,1,1,1))\cup R((5,4,2,1,1,1))\cup \{\nu^{(m)}(\gamma_{(1,1,1,1,1)}) :  m\in[0,\infty) \}
=\overline{\mathcal{C}}_{(1,1,1,1,1)}$
\end{enumerate}

$k=6$:
\begin{enumerate}
 \setlength\itemsep{.5em}
\item  $ \mathcal{C}_{(6)}=\overline{\mathcal{C}}_{(6)}$ is given in \eqref{def:hook}.

\item  $\mathcal{C}_{(5,1)} =\overline{\mathcal{C}}_{(3,3)}$ is given in \eqref{def:almost hook} for $(a,b)=(4,2)$ (almost hook shape).

\item  $\mathcal{C}_{(3,3)} =\overline{\mathcal{C}}_{(5,1)}$ is given in \eqref{def:almost hook} for $(a,b)=(2,4)$ (almost hook shape).

\item  $\mathcal{C}_{(4,2)}=R((3, 1, 1, 1, 1, 1, 1))\cup R((4, 2, 2, 2, 1))\cup R((4, 4, 4, 1, 1)) \cup  \{\nu^{(m)}(\gamma_{(4,2)}) :  m\in[0,\infty) \}  
= \overline{\mathcal{C}}_{(4,1,1)}$ 

\item  $\mathcal{C}_{(4,1,1)}=R((2, 2, 1, 1, 1, 1))\cup R((3, 3, 2, 1, 1))\cup R((4, 4, 3, 3)) \cup R((5, 5, 5, 3, 1))\cup \{\nu^{(m)}(\gamma_{(4,1,1)}) :  m\in[0,\infty) \}
=\overline{\mathcal{C}}_{(4,2)}$

\item  $\mathcal{C}_{(3,2,1)}=R((3, 2, 1, 1, 1, 1, 1))\cup R((5, 3, 1, 1, 1, 1))\cup  R((5, 3, 3, 1, 1, 1))\cup R((5, 5, 3, 1, 1, 1))$

${\;\;\;\;\;\;\;\;\;\;}\cup \{\nu^{(m)}(\gamma_{(3,2,1)}) :  m\in[0,\infty) \}
=\overline{\mathcal{C}}_{(3,1,1,1)}$

\item  $\mathcal{C}_{(3,1,1,1)}=R((4, 1, 1, 1, 1, 1))\cup R((4, 2, 2, 1, 1, 1))\cup  R((4, 4, 2, 1, 1, 1))
\cup R((5, 4, 2, 2, 1, 1)) \cup R((6, 5, 4, 2, 2, 2))$

${\;\;\;\;\;\;\;\;\;\;\;\;\;} \cup \{\nu^{(m)}(\gamma_{(3,1,1,1)}) :  m\in[0,\infty) \}
=\overline{\mathcal{C}}_{(3,2,1)}$

\item  $\mathcal{C}_{(2,2,2)}=R((3, 3, 1, 1, 1, 1))\cup  \{\nu^{(m)}(\gamma_{(2,2,2)}) :  m\in[0,\infty) \}
=\overline{\mathcal{C}}_{(2,2,1,1)}$

\item  $\mathcal{C}_{(2,2,1,1)}=R((3, 2, 2, 1, 1))\cup R((4, 3, 3, 3))\cup \{\nu^{(m)}(\gamma_{(2,2,1,1)}) :  m\in[0,\infty) \}
=\overline{\mathcal{C}}_{(2,2,2)}$

\item  $\mathcal{C}_{(2,1,1,1,1)}=R((4,2,1,1,1,1,1))\cup R((4,3,2,1,1,1,1))\cup R((5, 4, 2, 1, 1, 1,1))
\cup R((5,4,3,2,1,1,1))\cup R((6,5,3,2,1,1,1))$

${\;\;\;\;\;\;\;\;\;\;\;\;\;\;\;}\cup  R((6,5,4,3,1,1,1))\cup  \{\nu^{(m)}(\gamma_{(2,1,1,1,1)}) :  m\in[0,\infty) \}
=\overline{\mathcal{C}}_{(2,1,1,1,1)}$

\item  $\mathcal{C}_{(1,1,1,1,1,1)}=R((4,3,1,1,1,1,1))\cup R((5,3,2,1,1,1,1))\cup R((5, 4, 3, 1, 1, 1,1))
\cup R((6,4,3,2,1,1,1))$

${\;\;\;\;\;\;\;\;\;\;\;\;\;\;\;\;\;\;}\cup R((6,5,4,2,1,1,1))\cup  \{\nu^{(m)}(\gamma_{(1,1,1,1,1,1)}) :  m\in[0,\infty) \}=\overline{\mathcal{C}}_{(1,1,1,1,1,1)}$

\end{enumerate}
}

\end{document}